\begin{document} 

\newcommand{\bbA}{{\bf A}}
	\newcommand{\cbbA}{{\check \bbA}}
	\newcommand{\mbbA}{{\mathcal A}}
	\newcommand{\bbbA}{{\bar \mbbA}}
	\newcommand{\hbbA}{{\widehat \bbA}}
	\newcommand{\tbbA}{{\tilde \bbA}}
	\newcommand{\bba}{{\bf a}}
	\newcommand{\bbB}{{\bf B}}
	\newcommand{\bbC}{{\bf C}}
	\newcommand{\bbc}{{\bf c}}
	\newcommand{\bbD}{{\bf D}}
	\newcommand{\bbd}{{\bf d}}
	\newcommand{\bbe}{{\bf e}}
	\newcommand{\bbE}{{\bf E}}
	\newcommand{\rE}{{\rm E}}
	\newcommand{\bbf}{{\bf f}}
	\newcommand{\bbF}{{\bf F}}
	\newcommand{\bbP}{{P}}
	\newcommand{\bbg}{{\bf g}}
	\newcommand{\bbG}{{G}}
	\newcommand{\bbK}{{\bf K}}
	\newcommand{\bbH}{{H}}
	\newcommand{\bbh}{{\bf h}}
	\newcommand{\bbw}{{\bf w}}
	\newcommand{\bbI}{{\bf I}}
	\newcommand{\bbi}{{\bf i}}
	\newcommand{\bbj}{{\bf j}}
	\newcommand{\bbJ}{{\bf J}}
	\newcommand{\bbk}{{\bf k}}
	\newcommand{\bbl}{{\bf 1}}
	\newcommand{\bbM}{{\bf M}}
	\newcommand{\bbm}{{\bf m}}
	\newcommand{\bbN}{{\bf N}}
	\newcommand{\bbn}{{\bf n}}
	\newcommand{\bbQ}{{Q}}
	\newcommand{\bbq}{{\bf q}}
	\newcommand{\bbO}{{\bf O}}
	\newcommand{\bbR}{{\bf R}}
	\newcommand{\bbr}{{\bf r}}
	\newcommand{\bbs}{{\bf s}}
	\newcommand{\cbbs}{{\check \bbs}}
	\newcommand{\hbbs}{{\hat \bbs}}
	\newcommand{\tbbs}{{\tilde \bbs}}
	\newcommand{\bbS}{{\bf S}}
	\newcommand{\cbbS}{{\check \bbS}}
	\newcommand{\wB}{{\widehat B}}
	\newcommand{\tbbS}{\widetilde{\bf S}}
	\newcommand{\hbbS}{\widehat{\bf S}}
	\newcommand{\obbS}{\overline{\bf S}}
	\newcommand{\bbt}{{\bf t}}
	\newcommand{\bbT}{{\bf T}}
	\newcommand{\hbbT}{\widetilde{\bf T}}
	\newcommand{\tbbT}{\widetilde{\bf T}}
	\newcommand{\obT}{{\overline{\bf T}}}
	\newcommand{\bbU}{{\bf U}}
	\newcommand{\bbu}{{\bf u}}
	\newcommand{\bbV}{{\bf V}}
	\newcommand{\bbv}{{\bf v}}
	\newcommand{\tbbv}{\widetilde{\bf v}}
	\newcommand{\bbW}{{W}}
	\newcommand{\tbbW}{\widetilde{\bf W}}
	\newcommand{\hbbW}{\widehat{\bf W}}
	\newcommand{\bbX}{{X}}
	\newcommand{\tbby}{\tilde {\bf y}}
	\newcommand{\tbbX}{\widetilde {\bf X}}
	\newcommand{\hbbX}{\widehat {\bf X}}
	\newcommand{\bbx}{{\bf x}}
	\newcommand{\obbx}{{\overline{\bf x}}}
	\newcommand{\tbbx}{{\widetilde{\bf x}}}
	\newcommand{\hbbx}{{\widehat{\bf x}}}
	\newcommand{\obby}{{\overline{\bf y}}}
	\newcommand{\hbbY}{{\widehat{\bf Y}}}
	\newcommand{\tbbY}{{\widetilde{\bf Y}}}
	\newcommand{\bbY}{{\bf Y}}
	\newcommand{\bby}{{\bf y}}
	\newcommand{\bbZ}{{\bf Z}}
	\newcommand{\bbz}{{\bf z}}
	\newcommand{\bbb}{{\bf b}}
	\newcommand{\cD}{{\cal D}}
	\newcommand{\bbL}{{\bf L}}
	\newcommand{\bxi} {\boldsymbol  \xi}
	\newcommand{\bbeta} {\boldsymbol  \eta}
	\newcommand{\utm}{\underline{ \tilde m}}
	\newcommand{\um}{\underline{m}}
	\newcommand{\la}{\langle}
	\newcommand{\ra}{\rangle}
	\newcommand{\bla}{\big{\langle}}
	\newcommand{\bra}{\big{\rangle}}
	\newcommand{\Bla}{\Big{\langle}}
	\newcommand{\Bra}{\Big{\rangle}}
	
	\newcommand{\rdd}{\textcolor{red}}
	\newcommand{\bll}{\textcolor{blue}}
	
	\newcommand{\md}{\mbox{d}}
	\newcommand{\non}{\nonumber\\}
	\newcommand{\tr}{{\rm tr}}
	\newcommand{\Tr}{{\rm Tr}}
	\newcommand{\E}{{\mathbb{E}}}
	\newcommand{\rP}{{\mathbb{P}}}
	\newcommand{\bqa}{\begin{eqnarray}}
		\newcommand{\eqa}{\end{eqnarray}}
	
	\newcommand{\bqn}{\begin{eqnarray*}}
		\newcommand{\eqn}{\end{eqnarray*}}
		
	\theoremstyle{plain}
	\newtheorem{thm}{Theorem}[section]
	\newtheorem{corollary}[thm]{Corollary}
	\newtheorem{defin}[thm]{Definition}
	\newtheorem{prop}[thm]{Proposition}
	\newtheorem{remark}[thm]{Remark}
	\newtheorem{lemma}[thm]{Lemma}
	\newtheorem{assumption}[thm]{Assumption}
          \allowdisplaybreaks[4]


\newpage

\begin{center}
\large\bf
Extreme eigenvalues of Log-concave Ensemble
\end{center}

\vspace{0.5cm}
\renewcommand{\thefootnote}{\fnsymbol{footnote}}
\hspace{5ex}	
\begin{center}
 \begin{minipage}[t]{0.35\textwidth}
\begin{center}
Zhigang Bao\footnotemark[1]  \\
\footnotesize {Hong Kong University of Science and Technology}\\
{\it mazgbao@ust.hk}
\end{center}
\end{minipage}
\hspace{8ex}
\begin{minipage}[t]{0.35\textwidth}
\begin{center}
Xiaocong Xu\footnotemark[3]  \\
\footnotesize {Hong Kong University of Science and Technology}\\
{\it xxuay@connect.ust.hk}
\end{center}
\end{minipage}
\end{center}

\footnotetext[1]{Supported by  Hong Kong RGC grant  GRF 16303922 and NSFC grant 12271475 and NSFC22SC01}
\footnotetext[3]{Supported by  Hong Kong RGC grant GRF 16301520 and GRF 16305421}
\vspace{0.8cm}
\begin{center}
 \begin{minipage}{0.8\textwidth}\footnotesize{
Abstract:  In this paper, we consider the log-concave ensemble of random matrices, a class of covariance-type matrices $XX^*$ with isotropic log-concave $X$-columns. A main example is the covariance estimator of the uniform measure on isotropic convex body. Non-asymptotic estimates and first order asymptotic  limits for the extreme eigenvalues have been obtained in the literature. In this paper, with the recent advancements on log-concave measures \cite{chen, KL22},  we take a step further to locate the eigenvalues with a nearly optimal precision, namely, the spectral rigidity of this ensemble is derived.  Based on the spectral rigidity and an additional ``unconditional"  assumption, we further derive the Tracy-Widom law for the extreme eigenvalues of $XX^*$, and the Gaussian law for the extreme eigenvalues in case strong spikes are present. }
\end{minipage}
\end{center}


\thispagestyle{headings}
\section{Introduction and main results}\label{Sec. Intro and main}
\subsection{Background and matrix model}
Let $q_1,\cdots,q_N \in \mathbb{R}^{M}$ be i.i.d. isotropic log-concave random vectors. That means, $\E q_i q_i^* = I$ and $q_i$ possesses a density function $\exp (-V)$ where $V$ is convex. The commonly considered sample covariance matrix of $q_i$'s is defined as 
\begin{align}\label{def of S}
H\equiv H_N :=\frac{1}{N}  \sum_{i=1}^N q_i q_i^* =: XX^*,
\end{align}
with $X := (x_1,\cdots,x_N) = N^{-1/2}(q_1,\cdots,q_N)$ is the scaled random matrix. 
The study of the random matrix $H$ dates back to \cite{KLS}, which was motivated by the question `` How many independently and uniformly chosen points on an isotropic convex body are needed for the empirical covariance estimator to approximate the identity sufficiently well?". The question then boils down to the estimate of the extreme eigenvalues of the random matrix $H$, since uniform measure on isotropic convex body is isotropic log-concave. The problem has been investigated in \cite{Bou, GHT, Pao, Rud, Aub, Men}, and the strongest result is obtained in \cite{ALPT}. It is shown in \cite{ALPT} that in case $M\leq N\leq \exp(\sqrt{M})$, one has
\begin{align*}
1-C\sqrt{\frac{M}{N}}\log \frac{2N}{M}\leq \lambda_M(H)\leq \lambda_1(H) \leq 1+C\sqrt{\frac{M}{N}}\log \frac{2N}{M}
\end{align*}
with probability greater than $1-\exp(-c\sqrt{n})$, with numerical constants $C,c>0$. Here we ordered the eigenvalues in descending order $\lambda_1(H)\geq\cdots\geq \lambda_M(H)$. The  above non-asymptotic bounds also match the asymptotic estimates of the extreme eigenvalues in the regime $M\equiv M(N)$ and $M/N\equiv y\to y^\infty\in (0,\infty)$ in \cite{CT}, which read 
\begin{align}
\lambda_{M\wedge N}(H) \stackrel{\mathbb{P}} \longrightarrow (1-\sqrt{y^\infty})^2=:\lambda_-^{\infty}, \qquad \lambda_1(H) \stackrel{\mathbb{P}} \longrightarrow (1+\sqrt{y^\infty})^2=: \lambda_+^{\infty}, \qquad \text{as} \quad N\to \infty,  \label{converg}
\end{align}
where the first convergence is proved in the regime $y^\infty\in (0,1)$. 
Prior to the above convergence of the extreme eigenvalues, the Machenko-Pastur law (MP law) for $H$ was proved  in \cite{PP07}. Specifically, denote by $\nu_{1N}:=\frac{1}{M}\sum_{i=1}^M \lambda_i(H)$ the empirical spectral distribution of $H$, the results in \cite{PP07} states that as $N\to \infty$, the measure $\nu_{1N}$ converges weakly (a.s.) to 
\begin{align}
\nu_{y^\infty,1}({\rm d}x):=\frac{1}{2\pi xy^\infty}\sqrt{\big((\lambda_+^\infty-x)(x-\lambda_-^\infty)\big)_+}{\rm d}x+\Big(1-\frac{1}{y^\infty}\Big)_+\delta({\rm d}x)  \label{global law}
\end{align}
In the sequel, we will use the notations $\lambda_+, \lambda_-, \nu_{y,1}$ which can be obtained via replacing $y^{\infty}$ by the non-asymptotic parameter $y=\frac{M}{N}$ in the definitions of $\lambda_+^\infty, \lambda_-^\infty, \nu_{y^\infty,1}$. 

In this paper, we take a step further to study the asymptotic behavior of the eigenvalues of $H$, especially the extreme eigenvalues, on a local scale. The MP law in (\ref{global law}) depicts the global behaviour of the eigenvalues. A control of the eigenvalue locations down to the scale slightly above eigenvalue spacings is called ``local law". A systematic  study of local law 
started from \cite{ESY}. We also refer to \cite{erdHos2013local, ESY2, ESY3, EYY}, the survey \cite{BK} and more references therein for the local semicircle law for Wigner type matrices.  For the covariance type of matrices, the local MP type laws have been obtained in \cite{BPZ2, TV, YinPi,alex2014isotropic, knowles2017anisotropic}.   A direct consequence of local law is the spectral rigidity, which can be regarded as a precise large deviation control of the individual eigenvalues. The local law and spectral rigidity serve as the main technical inputs for the universality of local statistics. Especially, based on the dynamic approach and the Green function comparison approach, equipped with the local laws, the extreme eigenvalues of many random matrix models have been proved to follow the Tracy-Widom law, which is also known as the edge universality. For instance, we refer to \cite{EYY, erdHos2012spectral, lee2015edge, alt2020correlated,  huang2020transition, LY, LeeYin, knowles2017anisotropic} and also the monograph   \cite{EY} for the development on these approaches. Specifically, for covariance type of matrices, the local law and Tracy-Widom limit have been obtained in \cite{YinPi,YinPib, BPZ, BPZ2, DY, DY1} for sample covariance matrices and sample correlation matrices. In this work, we extend this line of research to the log-concave ensemble $H$. 

A major technical input for the proof of local laws is a Hanson-Wright type inequality, namely, a concentration inequality for the quadratic forms of a column or row of the random matrices. Thanks to the recent breakthrough \cite{chen, KL22} on the lower bound of the isoperimetric coefficient in the KLS conjecture, a sufficiently sharp Hanson-Wright type inequality is possible for the log-concave ensemble; see Lemma \ref{Large deviation lemma} below. Based on this concentration inequality, the proof of the local law and spectral rigidity then follows the  strategy in \cite{YinPi}. Nevertheless, the proof of the edge universality requires one to control high-order correlations among the matrix entries, which brings the major obstacle in contrast to the independent case in \cite{YinPi} or self-normalized independent case in \cite{YinPib, BPZ}. Instead of the general log-concave ensemble, we consider the  sub-class  of {\it unconditional} log-concave ensemble, for the edge universality. Further, again for the unconditional ensemble, we consider the case when the isotropic covariance is perturbed by a finite rank of spikes, and derive the Gaussian law for the extreme eigenvalues when the spikes are sufficiently large. 

In the next subsection, we state our main results.

\subsection{Main results}

Throughout the paper, our basic assumption is as follows.
\begin{assumption} \label{assu.1.1}We make the following two assumptions on $H=XX^*$.

(i){\it (On X)} We assume that $X$ has i.i.d. columns $x_i$'s, and $x_i$'s are isotropic log-concave.

(ii) {\it (On $y$)} We assume that $M/N\equiv y\to y^\infty \in (0,\infty)\setminus \{1\}$, as $N\to \infty$. 
\end{assumption}

To state our main results, we first introduce some basic notions that are commonly used throughout the whole paper.
We first introduce the notions of stochastic domination which originated from \cite{erdHos2013averaging}.
 
 \begin{defin}[Stochastic domination] \label{def.sd}
 	Let 
 	$$
 		\mathsf{X}=\left(\mathsf{X}^{(N)}(u): N \in \mathbb{N}, u \in \mathrm{U}^{(N)}\right), \mathrm{Y}=\left(\mathsf{Y}^{(N)}(u): N \in \mathbb{N}, u \in \mathsf{U}^{(N)}\right)
 	$$
 	be two families of random variables, where $\mathsf{Y}$ is nonnegative, and $\mathsf{U}^{
(N)}$ is a possibly $N$-dependent parameter set.

We say that $\mathrm{X}$ is stochastically dominated by $\mathsf{Y}$, uniformly in $u$, if for all small $\epsilon > 0$  and large $D > 0$ ,
$$
\sup _{u \in \mathsf{U}^{(N)}} \mathbb{P}\left(\left|\mathsf{X}^{(N)}(u)\right|>N^{\varepsilon} \mathsf{Y}^{(N)}(u)\right) \leqslant N^{-D}
$$
for large enough $N > N_0(\epsilon, D)$. If $\mathsf{X}$ is stochastically dominated by $\mathsf{Y}$, uniformly in $u$, we use the notation
$\mathsf{X} \prec \mathsf{Y}$ , or equivalently $\mathsf{X} = O_{\prec}(\mathsf{Y})$. Note that in the special case when $\mathsf{X}$ and $\mathsf{Y}$ are deterministic, $\mathsf{X} \prec \mathsf{Y}$ means that for any given $\epsilon > 0$, $|\mathsf{X}^{(N)}(u)| \le N^{\epsilon}\mathsf{Y}^{(N)}(u)$ uniformly in $u$, for all sufficiently large $N \ge N_0(\epsilon)$.
\end{defin}

Our first result is on the rigidity of the eigenvalues. 
Let the classical locations of the eigenvalues $\gamma_j$'s be defined  as follows:
\begin{align}\label{def of gammaj}
	\int_{\gamma_j}^{\lambda_{+}} \nu_{y,1} ({\rm d}x) = \frac{j-1/2}{N}.
\end{align}

\begin{thm} [Spectral rigidity] \label{rigidity} Suppose that Assumption \ref{assu.1.1} holds. 
 For any $1 \le j \le M\wedge N$,  we have 
		\begin{align}\label{eigenvalue rigidity}
			|\lambda_{j}(H) - \gamma_{j}| \prec N^{-2/3} (\min \{ M\wedge N+1-j,j \})^{-1/3}.
		\end{align}
\end{thm}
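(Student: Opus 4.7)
The plan is to follow the now-standard two-step route: first derive a local Marchenko--Pastur law for the Green function at optimal spectral resolution, and then convert it into individual-eigenvalue rigidity via a Helffer--Sj\"ostrand argument. Since the paper already announces that the local law will follow the strategy of \cite{YinPi}, the bulk of the work is to adapt that strategy to the log-concave columns, with the sharp Hanson--Wright input of Lemma \ref{Large deviation lemma} replacing the independent-entry concentration used in \cite{YinPi}.

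For the local law, introduce the resolvent $G(z):=(H-zI)^{-1}$ and its normalized trace $m_N(z):=M^{-1}\operatorname{tr} G(z)$, together with the $N\times N$ companion $\mathcal{G}(z):=(X^*X-zI)^{-1}$. From the standard sample-covariance Schur-complement identity
\begin{align*}
\mathcal{G}_{ii}(z) = \frac{-1}{z\bigl(1 + x_i^* G^{(i)}(z)\,x_i\bigr)},
\end{align*}
where $G^{(i)}$ is the resolvent of $X^{(i)}(X^{(i)})^*$ with $X^{(i)}$ obtained by deleting the $i$-th column of $X$, one can derive a perturbed Marchenko--Pastur self-consistent equation for $m_N$. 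The decisive step is to replace each quadratic form $x_i^* G^{(i)}x_i$ by its conditional mean $N^{-1}\operatorname{tr} G^{(i)}$. Because the entries of $x_i$ are only log-concave and not independent, the classical Hanson--Wright argument is unavailable; however, Lemma \ref{Large deviation lemma}, which rests on the recent progress on the KLS conjecture \cite{chen, KL22}, supplies a Hanson--Wright type bound with at most polylogarithmic losses, and such losses are absorbable by the $N^\varepsilon$ factor in the stochastic domination. Given this input, the scheme of \cite{YinPi} (see also \cite{BPZ2}) applies: one establishes the stability of the MP self-consistent equation in a spectral domain containing a neighborhood of the soft edges $\lambda_\pm$ (using the square-root vanishing of $\nu_{y,1}$ there), and closes the estimate by a continuity-in-$\eta$ bootstrap initialized at large $\eta=\operatorname{Im} z$. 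The outcome is the optimal local law with error $\prec (N\eta)^{-1}$ in the bulk and the sharper edge control required below.

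Given the edge-optimal local law, the standard Helffer--Sj\"ostrand argument (cf.\ \cite{EYY, erdHos2013averaging}) converts the $L^\infty$ estimate on $m_N-m_{y,1}$ into $|\mathcal{N}(E) - N\!\int_E^{\lambda_+}\nu_{y,1}({\rm d}x)|\prec 1$ for the eigenvalue counting function $\mathcal{N}(E):=\#\{j:\lambda_j(H)\ge E\}$. Inverting this quantile-wise and exploiting the square-root behavior $\nu_{y,1}({\rm d}x)\asymp \sqrt{(\lambda_+-x)(x-\lambda_-)}\,{\rm d}x$ near the edges yields the $j$-dependent precision $N^{-2/3}(\min\{M\wedge N+1-j,j\})^{-1/3}$ claimed in (\ref{eigenvalue rigidity}).

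The main obstacle is the quadratic-form concentration used inside the self-consistent equation, which is the sole place where the log-concave structure (as opposed to entry-wise independence) enters. Without the sharp isoperimetric input of \cite{chen, KL22}, one would lose polynomial-in-$N$ factors that would destroy the $N^{-2/3}$ edge precision. Conditional on Lemma \ref{Large deviation lemma}, the remaining steps --- entry-wise resolvent estimates, stability of the MP self-consistent equation at the soft edges, and Helffer--Sj\"ostrand inversion --- parallel those of \cite{YinPi, BPZ2}, with only bookkeeping adjustments to accommodate the mild polylog losses coming from the log-concave Hanson--Wright bound.
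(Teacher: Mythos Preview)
Your proposal is correct and follows essentially the same approach as the paper: the paper reduces Theorems \ref{Strong MP law} and \ref{rigidity} to the large deviation Lemma \ref{Large deviation lemma} and then invokes Theorem 3.6 of \cite{YinPi} as a black box, so your sketch is simply an unpacking of what that black box contains. The only cosmetic difference is that the paper is terser---it does not spell out the Schur-complement/self-consistent-equation bootstrap or the Helffer--Sj\"ostrand inversion, since these are exactly the steps carried out in \cite{YinPi} once the quadratic-form concentration is in hand.
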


Next, we turn to the limiting distribution of the extreme eigenvalue. We need a further assumption on the ensemble. 

\begin{defin}[Unconditional random vector] \label{def.unconditional}
	A random vector $\xi=(\xi_1,\ldots, \xi_n)$ is unconditional if 
\begin{align*}
	(\xi_1,\cdots,\xi_n) \stackrel {d} =(\delta_1\xi_1,\cdots,\delta_n\xi_n) 
\end{align*}
for any choice of signs $\delta_1, \ldots, \delta_n$ independent of $\xi$. 
\end{defin}
	For any Hermitian matrix $A\in \mathbb{C}^{n\times n}$ we use $\lambda_1(A)\geq \dots\geq \lambda_n(A)$ to denote the ordered eigenvalues of $A$. Let $X^{\mathbf{w}}$ be an $M \times N$ random matrix with i.i.d. mean $0$ and variance $N^{-1}$ Gaussian entries, and the corresponding sample covariance matrix $W := X^{\mathbf{w}}(X^{\mathbf{w}})^*$ is known as the real Wishart matrix.  Denote $\mathbb{P}^W$ as the probability measure according to which the entries of $X^{\mathbf{w}}$ are distributed. $\mathbb{P}^H$ is defined analogously. We then have the following result
		
\begin{thm}[Universality of largest eigenvalue] \label{thm.universality}
Suppose that Assumption \ref{assu.1.1} holds. Assume further that $x_i$'s are distributed unconditionally. Then
	there exists $c > 0$ and $\delta > 0$ such that for any real number $s$, we have for sufficiently large $N$
	\begin{align*}
		\lim_{N \to \infty} \mathbb{P}^{H}\left( N^{2/3}(\lambda_1(H) - \lambda_{+} ) \le x \right) = \lim_{N \to \infty} \mathbb{P}^{W}\left(N^{2/3}(\lambda_1(W) - \lambda_{+} ) \le x  \right),
	\end{align*}
	where $\lambda_{+} = (1+ \sqrt{y})^2$. 
\label{universality thm}
\end{thm}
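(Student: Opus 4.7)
The plan is to establish edge universality via a Green function comparison (GFC) scheme along the lines of \cite{LY, knowles2017anisotropic}, with the unconditional assumption playing the critical role in taming the dependencies within each log-concave column.

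First, I would reduce the statement to a comparison of smoothed linear spectral statistics of the resolvent. Using a smooth approximation of $\mathbf{1}\{N^{2/3}(\lambda_1(H)-\lambda_+)\le s\}$ together with the rigidity of Theorem \ref{rigidity}, it suffices to establish $|\mathbb{E}^H[F(N\int\chi_E\,\mathrm{Im}\,m_N)] - \mathbb{E}^W[F(N\int\chi_E\,\mathrm{Im}\,m_N)]| = o(1)$ for a suitable smooth $\chi_E$ supported in a window of length $\asymp N^{-2/3}$ slightly to the right of $\lambda_+$, and $F$ a bounded smooth function. Via Helffer-Sj\"ostrand this functional becomes a contour integral against $G(z)=(H-z)^{-1}$ at spectral parameters with $\mathrm{Im}\,z$ slightly below $N^{-2/3}$.

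Next, interpolate columnwise. Let $H_\gamma$ be the hybrid sample covariance with the first $\gamma$ columns Gaussian and the remaining $N-\gamma$ log-concave, and telescope over $\gamma=0,\dots,N$. Conditioning on all columns other than the $\gamma$-th and using Sherman-Morrison/Schur complement, the resolvent $G_\gamma$ differs from the reduced resolvent of $H_\gamma-x_\gamma x_\gamma^*$ only through quadratic forms $x_\gamma^*(G^{(\gamma)})^k x_\gamma$. Taylor expanding the per-swap discrepancy yields terms of the form $\mathbb{E}[q_{\gamma,a_1}^{k_1}\cdots q_{\gamma,a_p}^{k_p}]\cdot(\text{deterministic Green function tensor})$. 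The isotropic condition $\mathbb{E}[q_\gamma q_\gamma^*]=I$ matches the second-order contributions with the Gaussian case exactly, and the unconditional assumption forces every joint moment with at least one odd exponent to vanish, eliminating the cubic contributions that would otherwise require a separate third-moment matching hypothesis.

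The main obstacle lies in the fourth and higher even joint moments: $\mathbb{E}[q_{\gamma,a}^2 q_{\gamma,b}^2]$ for a general log-concave column neither factorizes nor matches the Gaussian value, and the discrepancy is a priori $O(1)$ rather than $O(N^{-1/2})$. The remedy is to exploit the unconditional symmetry in full, which restricts contributing index patterns to matched pairs and exposes a structural agreement with the Gaussian Wick contractions at the leading order; the residual pair contractions are then controlled by combining the Hanson-Wright type concentration (Lemma \ref{Large deviation lemma}) --- made available by the sharp KLS bounds of \cite{chen, KL22} --- with the rigidity-based size estimates on resolvent tensors coming out of Theorem \ref{rigidity}. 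Each such term should contribute at most $o(N^{-1})$ per swap, so that summing over $N$ swaps yields the desired $o(1)$ bound. The rigorous combinatorial bookkeeping of these joint even moments, and the higher-order Taylor expansion that becomes necessary precisely because the naive cubic loss is absent, is where I expect most of the technical work to reside.
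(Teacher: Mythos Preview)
Your reduction to a Green function comparison and the use of unconditional symmetry to kill odd joint moments are both correct and match the paper. However, the scheme you outline --- a direct columnwise Lindeberg swap from $H$ all the way to the Wishart ensemble $W$ --- has a genuine gap at the step where you assert ``each such term should contribute at most $o(N^{-1})$ per swap.'' For unconditional isotropic log-concave columns, the even mixed moments $\mathbb{E}[x_{a1}^2 x_{b1}^2]$ do \emph{not} match their Gaussian values $N^{-2}$, and the aggregate discrepancy is not small enough. Concretely, the relevant term (in the paper's notation $\Delta I_1$) involves $\eta_0\sum_{k_1,k_2,k_3}^*(\mathbb{E}\prod x_{k_i1}^2 - N^{-3})\mathcal{A}(k_1k_1,k_2k_2,k_3k_3)$ with $\eta_0=N^{-2/3-\epsilon}$; the best available control on the inner sum, coming from Klartag's thin-shell variance bound for unconditional bodies (Lemma~\ref{thin shell}), yields only $O(N^{-1/6+\epsilon})$, hence $O(N^{-5/6+\epsilon})$ per column and $O(N^{1/6+\epsilon})$ after summing over $N$ swaps. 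Hanson--Wright concentration and rigidity alone do not close this gap: they feed into the resolvent bounds (\ref{GQBound})--(\ref{G2ijBound}) but cannot improve the moment discrepancy itself.

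The paper's remedy is a two-step argument that you are missing. First, the rectangular Dyson Brownian motion result of \cite{DY,LY} gives edge universality for $H(t_0)=X(t_0)X(t_0)^*$ with $X(t_0)=\sqrt{1-t_0}\,X+\sqrt{t_0}\,X^{\mathbf w}$ at the short time $t_0=N^{-1/3+\epsilon}$, using only the $\eta_*$-regularity supplied by rigidity. Second, the Green function comparison is run between $H$ and $H(t_0)$ rather than between $H$ and $W$; since the columns of $X(t_0)$ are again unconditional isotropic log-concave (Proposition~\ref{iso prop}), the same moment analysis applies, but now every moment difference $\bar\Delta_{k_1k_2k_3}=\mathbb{E}[\prod x_{k_i1}^2]-\mathbb{E}[\prod \tilde x_{k_i1}^2]$ carries an explicit prefactor $\sim t_0=N^{-1/3+\epsilon}$. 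This extra factor converts the per-column error to $O(N^{-7/6+\epsilon})$ (Lemma~\ref{telescoping error lemma}), which does sum to $o(1)$. The quantitative control on the ``bad'' index sets where mixed even moments deviate appreciably from their factored values is isolated in Lemma~\ref{approximation lemma}, whose proof combines the thin-shell bound with a simple counting argument; this lemma, rather than a vague appeal to Wick-type structural agreement, is what makes the comparison close.
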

It is well known that for Wishart matrix, after appropriate centering and rescaling, the largest eigenvalue  converges in distribution to the Tracy-Widom law. Combining with Theorem \ref{universality thm}, we have the following corollary.
\begin{corollary} \label{cor.universality}
	Under the same condition as Theorem \ref{universality thm}, we have
	\begin{align*}
		\frac{N\lambda_1(H) - (\sqrt{M} + \sqrt{N})^2}{(\sqrt{M} + \sqrt{N})(1/\sqrt{M} + 1/\sqrt{N})^{1/3}} \to  \mathrm{TW}_1,
	\end{align*}
	where $\mathrm{TW}_1$ denotes the Tracy-Widom distribution.
\end{corollary}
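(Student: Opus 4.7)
The proof is essentially a routine combination of Theorem \ref{thm.universality} with the classical Tracy--Widom limit for real Wishart matrices due to Johnstone. The plan is: first quote Johnstone's theorem for the Gaussian comparison ensemble $W$, rewrite the centering and scaling in terms of $(\lambda_+, N^{2/3})$, and then use Theorem \ref{thm.universality} to transfer the limit law from $W$ to $H$.

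More concretely, let $A := \sqrt{N}\, X^{\mathbf{w}}$, so that $A$ is an $M\times N$ matrix of i.i.d.\ standard real Gaussians and $AA^* = NW$. Johnstone's theorem asserts
\begin{equation*}
\frac{\lambda_1(AA^*) - (\sqrt{M}+\sqrt{N})^2}{(\sqrt{M}+\sqrt{N})(1/\sqrt{M}+1/\sqrt{N})^{1/3}} \xrightarrow{d} \mathrm{TW}_1,
\end{equation*}
which is exactly the statement of the corollary but with $H$ replaced by $W$. A direct computation shows $(\sqrt{M}+\sqrt{N})^2 = N\lambda_+$ and $(\sqrt{M}+\sqrt{N})(1/\sqrt{M}+1/\sqrt{N})^{1/3} = N^{1/3}(1+\sqrt{y})^{4/3}y^{-1/6}$, so Johnstone's result is equivalent to
\begin{equation*}
N^{2/3}\bigl(\lambda_1(W) - \lambda_+\bigr) \xrightarrow{d} (1+\sqrt{y^\infty})^{4/3}(y^\infty)^{-1/6}\,\mathrm{TW}_1,
\end{equation*}
where we used $y \to y^\infty \in (0,\infty)\setminus\{1\}$ and continuity of the scaling factor.

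Next, Theorem \ref{thm.universality} gives that for every real $x$,
\begin{equation*}
\lim_{N\to\infty} \mathbb{P}^{H}\bigl(N^{2/3}(\lambda_1(H)-\lambda_+) \le x\bigr) = \lim_{N\to\infty}\mathbb{P}^{W}\bigl(N^{2/3}(\lambda_1(W)-\lambda_+) \le x\bigr),
\end{equation*}
so $N^{2/3}(\lambda_1(H)-\lambda_+)$ has the same limiting distribution as $N^{2/3}(\lambda_1(W)-\lambda_+)$. Reversing the rescaling above (that is, dividing through by $(1+\sqrt{y})^{4/3}y^{-1/6}$, or equivalently by $N^{-1/3}(\sqrt{M}+\sqrt{N})(1/\sqrt{M}+1/\sqrt{N})^{1/3}$) then yields the claimed Tracy--Widom limit for $\lambda_1(H)$.

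There is no real obstacle here: the only points to verify carefully are the algebraic identification of the two scalings and the fact that $y$ can be replaced by $y^\infty$ in limits (justified by $y\to y^\infty\ne 1$, which keeps the scaling factor bounded and bounded away from zero). All the genuine work has been absorbed into Theorem \ref{thm.universality}.
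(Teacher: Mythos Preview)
The proposal is correct and follows exactly the approach the paper indicates: the paper simply remarks that the Tracy--Widom law for Wishart matrices is well known and that combining this with Theorem~\ref{universality thm} yields the corollary, and your argument carries out precisely this combination with the algebraic identification of scalings spelled out.
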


\begin{remark} Our discussions on Theorem \ref{thm.universality} and Corollary \ref{cor.universality} can be extended to the smallest eigenvalue as well. Hence, the edge universality also holds at the left edge. For brevity, here we present and proof  the results for the largest eigenvalue only. 
\end{remark}

	Our last result is on the spiked model, which is defined as 
\begin{align}\label{def of Q}
	Q\equiv Q_N  := TXX^*T^*
\end{align}
 with the population covariance matrix 
\begin{align*}
	\Sigma := TT^* = I_{M} + \sum_{i=1}^r d_i \mathrm{e}_{i}\mathrm{e}_{i}^*.
\end{align*}
Here $\{\mathrm{e}_{i}\}$ is the standard basis and $r$ is a fixed number. We  further assume $d_1  \ge \cdots \ge d_r > \sqrt{y}$. The spiked model has been studied extensively in the literature. The primary interest is to investigate the limiting behaviour of its extreme eigenvalues and eigenvectors. Not trying to be comprehensive, we refer to \cite{BBP, BY, BN, CD, BKYPCA, Joh, BDWW, BW} for related study.

We summarize the more precise assumptions on $d_i$'s as follows.
\begin{assumption}\label{assume on spikes}
	Assume $r$ to be fixed. For any $1 \le i, j \le r$, we assume $d_i  \ge \sqrt{y}+\epsilon$ and  $\min_{i \neq j}|d_i - d_j| > N^{-1/2+\epsilon}$ for some small constant $\epsilon > 0$.
\end{assumption}
Under the above assumption, we have the following result. 
\begin{thm}\label{Spiked model distribution}
	Under Assumption \ref{assume on spikes} and the  assumptions in Theorem \ref{universality thm}, there exists a random variable $\Phi_i$ such that the outlying eigenvalue admits the expansion
	\begin{align}
		\lambda_i(Q) = \theta(d_i) + \sqrt{\frac{d_i^2 - y}{N}}\Phi_i + O_{\prec}\left( \frac{1}{N^{1/2+\epsilon}} \right), \quad i = 1,\cdots,r,
	\end{align}
	for some small constant $\epsilon > 0$, and $\theta(d_i) = 1 + d_i + y + yd_i^{-1}$. Furthermore, 
	\begin{align}
		\frac{\Phi_i  - a_i}{b_i} \Rightarrow \mathcal{N}(0,1), 	
	\end{align}
	where
	\begin{align*}
		&a_i = \frac{(d_i+1)\sqrt{N}\sqrt{d_i^2 - y}}{d_i^3}\E \Bigg[   \sum_{k=1}^Nx_{ik}^2 \sum_{u=1,u\neq i}^M\Big(x_{uk}^2 - \frac{1}{N}\Big)\Bigg],  \\
		&b_i^2 = 2(1+d_i^{-1})^2 + \frac{(d_i^2-y)(d_i+1)^2}{d_i^4} \Big(\E \big(\sqrt{N}x_{ik}\big)^4 - 3\Big). 
	\end{align*}
\end{thm}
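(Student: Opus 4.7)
The plan is to follow the standard BBP-type strategy for spiked covariance matrices, adapted to the log-concave setting. Since $Q$ and $X^*\Sigma X$ share the same nonzero spectrum and $\Sigma - I_M = E_r D E_r^*$ with $E_r := (e_1, \ldots, e_r)$, the identity
\begin{equation*}
X(\lambda I_N - X^*X)^{-1}X^* = \lambda G(\lambda) - I_M, \qquad G(\lambda) := (\lambda I_M - XX^*)^{-1},
\end{equation*}
transforms the determinantal condition $\det(\lambda I_N - X^*\Sigma X)=0$ into
\begin{equation*}
\det\bigl((I_r + D^{-1}) - \lambda\, G_{[r]}(\lambda)\bigr) = 0,
\end{equation*}
where $G_{[r]}$ is the top-left $r\times r$ block of $G$. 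The anisotropic local law underlying Theorem \ref{rigidity} yields $G_{ij}(\lambda) = m(\lambda)\delta_{ij} + O_\prec(N^{-1/2})$ at $\lambda$ slightly above $\lambda_+$, where $m$ is the Stieltjes transform of $\nu_{y,1}$. Combined with the spacing $|d_i - d_j| > N^{-1/2+\epsilon}$ from Assumption \ref{assume on spikes}, a Schur complement of this $r\times r$ system isolates the $i$-th outlier via the scalar relation $\lambda G_{ii}(\lambda) = 1 + d_i^{-1} + O_\prec(N^{-1/2-\epsilon})$.

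Next I would apply a second Schur complement to $G_{ii}(\lambda)$ itself: with $\tilde x_i$ the $i$-th row of $X$ and $\tilde{\mathcal G}^{(i)}(\lambda) := (X^{(i)*}X^{(i)} - \lambda I_N)^{-1}$, one has $\lambda G_{ii}(\lambda) = (1 + \tilde x_i \tilde{\mathcal G}^{(i)}(\lambda)\tilde x_i^*)^{-1}$. Let $\tilde m$ denote the Stieltjes transform of the companion MP law, so that $\lambda m(\lambda)(1+\tilde m(\lambda))=1$ and $\theta(d_i)$ is exactly the solution of $\tilde m(\theta(d_i)) = -1/(1+d_i)$ outside the support. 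Linearising around $\theta(d_i)$ gives
\begin{equation*}
\lambda_i(Q) - \theta(d_i) = -\frac{\tilde x_i\,\tilde{\mathcal G}^{(i)}(\theta(d_i))\,\tilde x_i^* - \tilde m(\theta(d_i))}{\tilde m'(\theta(d_i))} + O_\prec(N^{-1/2-\epsilon}),
\end{equation*}
and an explicit computation of $\tilde m'(\theta(d_i))$ produces the prefactor $\sqrt{(d_i^2 - y)/N}$, identifying $\Phi_i$ (up to an absolute constant) with the rescaled quadratic-form fluctuation on the right-hand side.

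The heart of the argument is a CLT for this quadratic form. Decomposing, at $\lambda = \theta(d_i)$,
\begin{equation*}
\tilde x_i \tilde{\mathcal G}^{(i)} \tilde x_i^* - \tfrac{1}{N}\tr\tilde{\mathcal G}^{(i)} = \underbrace{\sum_{j}\bigl(|x_{ij}|^2 - \tfrac{1}{N}\bigr)\tilde{\mathcal G}^{(i)}_{jj}}_{=:T_1} + \underbrace{\sum_{j\ne k} x_{ij}\overline{x_{ik}}\,\tilde{\mathcal G}^{(i)}_{jk}}_{=:T_2},
\end{equation*}
with $\tfrac{1}{N}\tr\tilde{\mathcal G}^{(i)} - \tilde m = O_\prec(N^{-1})$ from the local law. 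The unconditional assumption is essential: writing $x_{ij} = \varepsilon_{ij}|x_{ij}|$ with $\{\varepsilon_{ij}\}$ i.i.d.\ Rademacher variables independent of the magnitudes, condition on the $\sigma$-algebra $\mathcal F$ generated by $\{|x_{jk}|\}_{j,k}$ and $\{\varepsilon_{jk}\}_{j\ne i, k}$; then $\tilde{\mathcal G}^{(i)}$ and the magnitudes $|x_{ij}|$ are $\mathcal F$-measurable, while $T_2$ becomes a Rademacher quadratic form in $(\varepsilon_{ij})_j$. A conditional martingale CLT for such quadratic forms, together with the Hilbert-Schmidt control $\tfrac{1}{N^2}\sum_{j\ne k}(\tilde{\mathcal G}^{(i)}_{jk})^2 \to \int (x-\theta(d_i))^{-2}\tilde\nu_{y,1}(dx)$ obtained from Theorem \ref{rigidity}, shows that $T_2$ is asymptotically Gaussian with variance matching the $2(1+d_i^{-1})^2$ part of $b_i^2$. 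The expectation $\E[T_1]$ is non-zero because log-concavity couples $|x_{ij}|^2$ to the other entries of the $j$-th column and hence to $\tilde{\mathcal G}^{(i)}_{jj}$; after rescaling, this produces exactly the shift $a_i$ (which vanishes in the independent-entries case). The centred part of $T_1$ contributes the kurtosis correction $\frac{(d_i^2 - y)(d_i+1)^2}{d_i^4}(\E(\sqrt N x_{ik})^4 - 3)$ in $b_i^2$ via $\mathrm{Var}(|x_{ij}|^2)\cdot \tfrac{1}{N^2}\sum_j (\tilde{\mathcal G}^{(i)}_{jj})^2$.

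The main obstacles I foresee are: (i) justifying the conditional Rademacher CLT uniformly in the random $\mathcal F$-measurable kernel $\tilde{\mathcal G}^{(i)}$, which requires a Lindeberg or fourth-moment bound together with operator-norm control of $\tilde{\mathcal G}^{(i)}$ at the spectral parameter $\theta(d_i)$ lying just outside the MP support; (ii) accurately identifying $\E[T_1]$ with $a_i$, where the log-concave correlation between $|x_{ij}|^2$ and $\tilde{\mathcal G}^{(i)}_{jj}$ must be tracked via the Hanson-Wright type estimate discussed in the paper and matched with the closed-form expectation in the statement; and (iii) verifying that all discarded pieces (the off-diagonal contributions to the $r\times r$ determinantal equation, the quadratic Taylor correction, and the trace-concentration error) are genuinely $O_\prec(N^{-1/2-\epsilon})$, which relies essentially on the $N^{-1/2+\epsilon}$ spike separation in Assumption \ref{assume on spikes}.
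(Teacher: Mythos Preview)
Your overall architecture is exactly the one the paper uses: reduce to a scalar equation for $[\mathcal G_1(\theta(d_i))]_{ii}$ via the $r\times r$ determinantal relation (the paper quotes this as Lemma~\ref{Phi description}), pass to the quadratic form $\Theta_i=\langle r_i,\mathcal G_2^{[i]}r_i\rangle-m_2$ by a Schur complement, and split it into the diagonal piece $T_1$ and the Rademacher off-diagonal piece $T_2$ using the unconditional assumption. The paper treats $T_2$ by computing the conditional characteristic function via a short cumulant expansion rather than a martingale CLT, but that is a cosmetic difference.

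The real gap is in your obstacle (ii). The tool that drives both the identification of the shift $a_i$ and the verification that the conditional variance of $T_2$ has a deterministic limit is \emph{not} the Hanson--Wright inequality; it is Klartag's thin-shell variance bound for unconditional log-concave vectors (the paper's Lemma~\ref{thin shell}) and its consequence Lemma~\ref{approximation lemma}, which says that $\mathbb E\bigl[\prod_{a\le \ell+1}y_{k_a}^2\bigr]$ differs from $N^{-1}\mathbb E\bigl[\prod_{a\le \ell}y_{k_a}^2\bigr]$ by more than $N^{-(3\ell+4)/3}$ for at most $O(N^{2/3})$ indices $k_{\ell+1}$. Concretely, after a second Schur expansion of $\tilde{\mathcal G}^{(i)}_{jj}$ the shift term in $T_1$ becomes $\sqrt{N}\,m_1\sum_k x_{ik}^2\sum_{u\ne i}(x_{uk}^2-N^{-1})$; the paper then uses Lemma~\ref{approximation lemma} to prove that this quantity has variance $O(N^{-1/6})$, so it contributes only through its expectation. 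The same moment-approximation lemma is needed to show that the conditional variance $D_2=2N\sum_{k\ne l}x_{ik}^2x_{il}^2(\tilde{\mathcal G}^{(i)}_{kl})^2$ can have its random weights $x_{ik}^2x_{il}^2$ replaced by $N^{-2}$ up to $o_p(1)$; your formula $\tfrac{1}{N^2}\sum_{j\ne k}(\tilde{\mathcal G}^{(i)}_{jk})^2$ already presupposes this replacement. Hanson--Wright gives tail bounds, not these mixed-moment replacements, so as written your plan does not account for the step where the log-concave dependence is actually controlled.
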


\subsection{Proof Strategy} First, our proofs rely on recently developed poly-log bound for Cheeger constant of log concave measures. In the recent breakthrough \cite{chen} and \cite{KL22} on the KLS conjecture, a lower bound better than $n^{-\epsilon}$ with any $\epsilon>0$ is obtained for the Cheeger constant of log concave measure on $\mathbb{R}^n$. Specifically, a poly-log bound $\log^{-5} n$ is recently derived in \cite{KL22}. Based on this near constant lower bound and the fundamental concentration result of Gromov and Milman, we have a nearly optimal concentration for the Lipschitz function of the isotropic log concave random vectors; see Theorem \ref{Lip Theorem} and Corollary \ref{Lip corollary} below. With the aid of the Lipschitz concentration, following the argument in \cite{Ada}, one can then derive a Hanson-Wright type inequality for the quadratic forms of the isotropic log concave random vectors, with a nearly optimal tail probability bound; see Lemma \ref{Large deviation lemma} below. 

The Hanson-Wright type inequality, or the large deviation inequality for the quadratic forms of the columns of our data matrix $X$, is a key input for our random matrix analysis. From \cite{YinPi}, one knows that as long as a data matrix $X$ has i.i.d. columns and the columns satisfy sufficiently strong concentrations for their linear forms and quadratic forms, one will have the local MP law (cf. Theorem \ref{thm. local law}) and the rigidity of the eigenvalues ( cf. Theorem \ref{rigidity}). With these precise large deviation controls on the spectrum, we can then proceed with the analysis of the limiting distributions of the extreme eigenvalues. For edge universality, roughly speaking, we will follow the dynamic approach together with the Green function comparison approach developed by the Erd\H{o}s-Yau school. We refer to the monograph \cite{EY} for a detailed introduction of this framework. For our model, more specifically, we will first construct an interpolation between $X$ and the matrix $X^{\mathbf{w}}$ with i.i.d. $N(0,1/N)$ entries. That is $	X(t) := \sqrt{1-t} X + \sqrt{t}X^{\mathbf{w}}$, and $H(t)=X(t)X^*(t)$ is the corresponding covariance matrix. The work \cite{DY}, an adaptation of the work \cite{LY} from the Dyson Brownian Motion to the rectangular Dyson Brownian Motion, indicates that the edge universality holds for our $H(t)$ as long as $t\geq t_0=N^{-\frac13+\epsilon}$. Then what remains is to compare the distribution of the extreme eigenvalues of $H(t_0)$ with that of the original matrix $H=H(0)$. For this step, we adopt a Green function comparison approach which dates back to \cite{YinPi}. The Green function comparison heavily relies on effective controls of high order correlation of the components of the matrix columns. However, it is known that for the general log concave random vectors, estimating the high-order correlation is challenging. Therefore, instead of proving the edge universality of the most general log concave ensemble, we consider a sub class with an additional unconditional assumption. We then show that under the conditional assumption, the high order correlation of the vector components effectively resemble the i.i.d. counterparts. Finally, when the population covariance is not exactly isotropic, we also consider the case when spikes are present. Relying on the local law and the high order correlation control, we can then also derive the fluctuation of the extreme eigenvalues when the spikes are above the threshold.

\subsection{Notations and Conventions} \label{s.NC}

Throughout this paper, we regard $N$ as our fundamental large parameter. Any quantities that are not explicit constant or fixed may depend on $N$; we almost always omit the argument $N$ from our notation. We use $\|u\|_\alpha$ to denote the $\ell^\alpha$-norm of a vector $u$. We further use $\|A\|_{(\alpha,\beta)}$ to denote the induced norm $\sup_{x\in \mathbb{C}^n, \|x\|_\alpha=1} \|Ax\|_\beta$ for an $A\in \mathbb{C}^{m\times n}$. We write  $\|A\|\equiv \|A\|_{(2,2)}$ for the usual operator norm of a matrix $A$.  We use $C$ to denote some generic (large) positive constant. The notation $a\sim b$ means $C^{-1}b \leq |a| \leq Cb$ for some positive constant $C$. Similarly, we use $a\lesssim b$ to denote the relation $|a|\leq Cb$ for some positive constant $C$. When we write $a \ll b$ and $a \gg b$ for possibly $N$-dependent quantities $a\equiv a(N)$ and $b\equiv b(N)$, we mean $|a|/b \to 0$ and $|a|/b \to \infty$ when $N\to \infty$, respectively.

\subsection{Organization}	The paper is organized as follows.  Section \ref{Sec. Preliminaries} is devoted to the preliminaries, which is crucial for later discussion.  In Section \ref{Sec Strong local MP law}, we establish the local MP law which further implies the spectral rigidity in Theorem \ref{rigidity} . The proof  Theorem \ref{universality thm} is stated in Section \ref{Sec. comparison}, based on a Green function comparison.  Finally,  in Section \ref{Sec Proofs for Spiked covariance matrices} and \ref{Appendix Remaining estimation in Section 6}, we prove  Theorem \ref{Spiked model distribution}. The proofs of some additional technical estimates are relegated to the appendix.

\section{Preliminaries}\label{Sec. Preliminaries}
In this section we collect some necessary  notations and technical tools.  First, we have the following elementary result about stochastic domination.
\begin{lemma} \label{prop_prec} Let
	\begin{equation*}
	\mathsf{X}_i=(\mathsf{X}^{(N)}_i(u):  N \in \mathbb{N}, \ u \in \mathsf{U}^{(N)}), \   \mathsf{Y}_i=(\mathsf{Y}_i^{(N)}(u):  N \in \mathbb{N}, \ u \in \mathsf{U}^{(N)}),\quad i=1,2
	\end{equation*}
	be families of  random variables, where $\mathsf{Y}_i, i=1,2,$ are nonnegative, and $\mathsf{U}^{(N)}$ is a possibly $N$-dependent parameter set.	Let 
	\begin{align*}
	\Psi=(\Psi^{(N)}(u): N \in \mathbb{N}, \ u \in \mathsf{U}^{(N)})
	\end{align*}
	be a family of deterministic nonnegative quantities. We have the following results:
	
(i)	If $\mathsf{X}_1 \prec \mathsf{Y}_1$ and $\mathsf{X}_2 \prec \mathsf{Y}_2$ then $\mathsf{X}_1+\mathsf{X}_2 \prec \mathsf{Y}_1+\mathsf{Y}_2$ and  $\mathsf{X}_1 \mathsf{X}_2 \prec \mathsf{Y}_1 \mathsf{Y}_2$.

 (ii) Suppose $\mathsf{X}_1 \prec \Psi$, and there exists a constant $C>0$ such that  $|\mathsf{X}_1^{(N)}(u)| \leq N^{C}\Psi^{(N)}(u)$ a.s. uniformly in $u$ for all sufficiently large $N$. Then $\E \mathsf{X}_1 \prec \Psi$. 
\end{lemma}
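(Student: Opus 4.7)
The plan is to handle part (i) by a union bound on the defining tail event and part (ii) by splitting the expectation on a high-probability event.

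For part (i), first I would fix $\epsilon,D>0$ and seek to bound $\mathbb{P}(|\mathsf{X}_1+\mathsf{X}_2|>N^{\epsilon}(\mathsf{Y}_1+\mathsf{Y}_2))$. By the triangle inequality, if the event on the left holds then at least one of $|\mathsf{X}_i|>N^{\epsilon}\mathsf{Y}_i$, $i=1,2$, must occur, because otherwise $|\mathsf{X}_1|+|\mathsf{X}_2|\leq N^{\epsilon}(\mathsf{Y}_1+\mathsf{Y}_2)$. Applying $\mathsf{X}_i\prec\mathsf{Y}_i$ with tolerance $D+1$ and a union bound yields $\mathbb{P}\leq 2N^{-(D+1)}\leq N^{-D}$ for $N$ large, uniformly in $u$. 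For products, I would use the analogous observation that $|\mathsf{X}_1\mathsf{X}_2|>N^{\epsilon}\mathsf{Y}_1\mathsf{Y}_2$ forces $|\mathsf{X}_1|>N^{\epsilon/2}\mathsf{Y}_1$ or $|\mathsf{X}_2|>N^{\epsilon/2}\mathsf{Y}_2$ (using nonnegativity of the $\mathsf{Y}_i$), so the same union-bound argument, applied with tolerance level $\epsilon/2$ and target exponent $D$, closes the estimate. The uniformity in $u$ is inherited directly from the uniformity in the definition of $\prec$.

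For part (ii), given $\epsilon>0$ I would split
\begin{equation*}
\mathbb{E}|\mathsf{X}_1^{(N)}(u)|=\mathbb{E}\bigl[|\mathsf{X}_1^{(N)}(u)|\,\mathbf{1}_{\mathcal{A}}\bigr]+\mathbb{E}\bigl[|\mathsf{X}_1^{(N)}(u)|\,\mathbf{1}_{\mathcal{A}^{c}}\bigr],
\end{equation*}
where $\mathcal{A}=\{|\mathsf{X}_1^{(N)}(u)|\leq N^{\epsilon}\Psi^{(N)}(u)\}$. On $\mathcal{A}$ the integrand is bounded by $N^{\epsilon}\Psi^{(N)}(u)$, so the first term is at most $N^{\epsilon}\Psi^{(N)}(u)$. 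On $\mathcal{A}^{c}$ the deterministic a.s. bound $|\mathsf{X}_1^{(N)}(u)|\leq N^{C}\Psi^{(N)}(u)$ applies, and by hypothesis $\mathbb{P}(\mathcal{A}^{c})\leq N^{-D}$ for any $D>0$ once $N$ is large enough. Hence the second term is bounded by $N^{C-D}\Psi^{(N)}(u)$, and choosing $D=C+\epsilon$ gives an overall bound $2N^{\epsilon}\Psi^{(N)}(u)$ for large $N$, uniformly in $u$. Since $\epsilon>0$ was arbitrary, this is exactly $\mathbb{E}\mathsf{X}_1\prec\Psi$ (after replacing $\mathsf{X}_1$ by $|\mathsf{X}_1|$, which is legitimate because $|\mathsf{X}_1|\prec\mathsf{Y}_1$ follows trivially from $\mathsf{X}_1\prec\mathsf{Y}_1$).

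There is no real obstacle here; the only small point worth tracking is the need to use a slightly larger $D$ or a smaller $\epsilon$ than the target inside the proof, so that the union bound in (i) and the deterministic-bound contribution in (ii) are absorbed into the desired polynomial tolerance. The uniformity over $u$ follows automatically in every step because the hypotheses are uniform and the union bound and deterministic worst-case bound do not interact with $u$.
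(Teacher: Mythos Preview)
Your argument is correct and is the standard proof of these elementary facts about stochastic domination. The paper does not actually supply a proof of this lemma; it is stated as a well-known preliminary result, so there is nothing to compare against beyond noting that your union-bound/splitting argument is exactly the routine verification one would expect.
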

The high probability event is then defined as follows.
\begin{defin}[High probability event] \label{def.high-probab}
We say an event $\mathcal{E}\equiv \mathcal{E}(N)$ holds with {\it high probability } (in $N$) if for any fixed $D>0$, $\mathbb{P}(\mathcal{E}^c)\leq N^{-D}$ when $N$ is sufficiently large. 
\end{defin}

Recall the matrix $Q$ from (\ref{def of Q}). Its Green function will be denoted by
\begin{align*}
	G(z) := (Q - zI)^{-1}, \quad z = E + \mathrm{i}\eta, \quad E \in \mathbb{R}, \eta > 0.
\end{align*}
In the sequel, we also need to consider $\mathcal{H}:= X^*X$ which shares the same non-zero eigenvalues with $H$. We further denote the Green functions of $H$ and $\mathcal{H}$ respectively by
\begin{align*}
	\mathcal{G}_1(z) = (H - zI)^{-1}, \quad \mathcal{G}_2(z) = (\mathcal{H} - zI)^{-1}, \quad z = E + \mathrm{i}\eta, \quad E \in \mathbb{R}, \eta > 0.
\end{align*} 
The corresponding normalized traces are defined by
\begin{align}\label{empirical stieljes}
	m_{1N}(z) := \frac{1}{M}\Tr \mathcal{G}_1(z) = \int (x-z)^{-1} \mathrm{d}F_{1N}(x),\quad 
	m_{2N}(z) := \frac{1}{N}\Tr \mathcal{G}_2(z) = \int (x-z)^{-1} \mathrm{d}F_{2N}(x),
\end{align}
where  $F_{1N}(x)$, $F_{2N}(x)$ are the empirical spectral distributions of $H$ and $\mathcal{H}$ respectively, i.e.,
\begin{align}
F_{1N}(x):=\frac{1}{M}\sum_{i=1}^M \mathds{1}(\lambda_i(H)\leq x), \quad F_{2N}(x):=\frac{1}{N}\sum_{i=1}^N \mathds{1}(\lambda_i(\mathcal{H})\leq x). \nonumber
\end{align}

When the entries of $X$ are i.i.d. with suitable moment condition, it is well-known since \cite{MP67} that $F_{1N}(x)$ and $F_{2N}(x)$ converge weakly (a.s.) to the MP laws 
$\nu_{y,1}$ and $\nu_{y,2}$ (respectively) given below
\begin{align}
&\nu_{y,1}({\rm d}x):=\frac{1}{2\pi xy}\sqrt{\big((\lambda_+-x)(x-\lambda_-)\big)_+}{\rm d}x+\Big(1-\frac{1}{y}\Big)_+\delta({\rm d}x),\nonumber\\
&\nu_{y,2}({\rm d}x):=\frac{1}{2\pi x}\sqrt{\big((\lambda_+-x)(x-\lambda_-)\big)_+}{\rm d}x+(1-y)_+\delta({\rm d}x), \label{19071801}
\end{align}
where $\lambda_{\pm}:=(1\pm \sqrt{y})^2$. Note that here the parameter $y$ may be $N$-dependent. Hence, the weak convergence (a.s.) shall be understood as $\int g(x) {\rm d} F_{aN}(x)-\int g(x) \nu_{y,a}({\rm d}x) \stackrel{a.s.} \longrightarrow 0 $ for any given bounded continuous function $g:\mathbb{R}\to \mathbb{R}$, for $a=1,2$. We further denote by $F_a$ the cumulative distribution function of $\nu_{y,a}$ for $a=1,2$. 
Note that $m_{1N}$ and $m_{2N}$ can be regarded as the Stieltjes transforms of $F_{1N}$ and $F_{2N}$, respectively.  We further define their deterministic counterparts, i.e.,  Stieltjes transforms of $\nu_{y,1},\nu_{y,2}$,  by $m_1(z),m_2(z)$,  respectively, i.e.,
 \begin{align}
 m_a(z):=\int (x-z)^{-1}\nu_{y,a}({\rm d}x), \quad a=1,2. \nonumber
 \end{align}
 From the definition (\ref{19071801}), it is elementary to compute 
 \begin{align}
m_1(z)=\frac{1-y-z+\mathrm{i} \sqrt{(\lambda_+-z)(z-\lambda_-)}}{2zy},  \quad m_2(z)=\frac{y-1-z+\mathrm{i} \sqrt{(\lambda_+-z)(z-\lambda_-)}}{2z}, \label{m1m2}
 \end{align}
where the square root is taken with a branch cut on the negative real axis. Equivalently, we can also characterize $m_1(z),m_2(z)$ as the unique solutions from $\mathbb{C}^+$ to $\mathbb{C}^+$ to the equations
\begin{align}
zym_1^2+[z-(1-y)]m_1+1=0, \qquad  zm_2^2+[z+(1-y)]m_2+1=0. \label{selfconeqt}
\end{align} 
Using \eqref{m1m2} and \eqref{selfconeqt}, one can easily derive the following identities  
\begin{align}
m_1=-\frac{1}{z(1+m_2)}, \quad 1+zm_1=\frac{1+zm_2}{y},\quad m_1\big((zm_2)'+1\big)=\frac{m_1'}{m_1}, \label{identitym1m2}
\end{align}
which will be used in the later discussions.

We will also often consider the minors  of $X$ which are denoted as follows. For $\mathbb{T} \subset \{1,\cdots N \}$ we denote by $X^{(\mathbb{T})}$ the $(M \times (N - |\mathbb{T}|))$ minor of $X$ obtained by removing all columns of $X$ indexed by $i \in \mathbb{T}$. The quantities $ \mathcal{G}_1^{(\mathbb{T})}(z), \mathcal{G}_2^{ (\mathbb{T})}(z), \lambda_\alpha^{(\mathbb{T})}$ are defined similarly using $X^{(\mathbb{T})}$. Analogously, 
	For $\mathbb{T} \subset \{1,\cdots M \}$ we define $X^{[\mathbb{T}]}$ as the $((M - |\mathbb{T}|) \times N )$ minor of $X$ obtained by removing all rows of $X$ indexed by $j \in \mathbb{T}$. The quantities $ \mathcal{G}_1^{[\mathbb{T}]}(z), \mathcal{G}_2^{ [\mathbb{T}]}(z), \lambda_\alpha^{[\mathbb{T}]}$ are defined similarly using $X^{[\mathbb{T}]}$.
	
In the sequel, we collection some basic properties of the log-concave measures. The following proposition can be found in \cite{well}, for instance.
\begin{prop}\label{iso prop}
	Suppose that $x,y$ are independent isotropic log-concave random vectors in $\mathbb{R}^n$, we have the following.
	\begin{itemize}
		\item[(\it{i})] (Preservation by convolution) $\sqrt{t}x+\sqrt{1-t}y$ is also isotropic log-concave  for any $t\in [0,1]$.  
		\item [(\it{ii})](Preservation by marginalization) Any sub-vector of $x$ is isotropic log-concave.
	\end{itemize}
\end{prop}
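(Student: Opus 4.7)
The plan is to reduce both parts to Prékopa's theorem (the functional form of the Brunn--Minkowski inequality), which states that any marginal of a log-concave density on $\mathbb{R}^{m+n}$ is again log-concave on $\mathbb{R}^n$. Isotropy, in contrast, is in both cases just a one-line covariance computation, so the real content is the preservation of log-concavity.

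For part (ii), let $S\subset \{1,\ldots,n\}$ and consider $x_S:=(x_i)_{i\in S}$. First I would check isotropy: the covariance matrix $\E[x_S x_S^*]$ is exactly the principal submatrix of $\E[xx^*]=I_n$ indexed by $S$, hence equals $I_{|S|}$; the mean-zero property passes trivially to any sub-vector. For the density, I would write it explicitly as the marginal
\[
p_S(x_S)=\int_{\mathbb{R}^{n-|S|}} e^{-V(x_S,x_{S^c})}\,dx_{S^c},
\]
where $V$ is convex on $\mathbb{R}^n$ so that $e^{-V}$ is log-concave on $\mathbb{R}^n$. Prékopa's theorem applied to this integrand directly yields $p_S(x_S)=e^{-V_S(x_S)}$ for some convex $V_S$, which is the definition of a log-concave density.

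For part (i), set $z:=\sqrt{t}\,x+\sqrt{1-t}\,y$. Isotropy follows from independence and the isotropy of $x,y$:
\[
\E[zz^*]=t\,\E[xx^*]+(1-t)\,\E[yy^*]=tI+(1-t)I=I,\qquad \E[z]=0.
\]
For the density, I would first observe that $\sqrt{t}\,x$ and $\sqrt{1-t}\,y$ have log-concave densities, since non-singular affine transformations preserve log-concavity (if $x$ has density $e^{-V(x)}$ then $\sqrt{t}\,x$ has density $t^{-n/2}e^{-V(u/\sqrt{t})}$, with a convex exponent). The density of $z$ is then the convolution of two log-concave densities. To show this convolution is log-concave, I would apply Prékopa's theorem to the function $(u,v)\mapsto f(u)g(v-u)$, which is log-concave on $\mathbb{R}^{2n}$ because the map $(u,v)\mapsto (u,v-u)$ is linear, and then integrate out $u$ to obtain the convolution as a marginal.

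Since the only nontrivial ingredient is Prékopa's theorem, which is standard and cited via \cite{well}, I do not anticipate a genuine technical obstacle; the mild subtlety, if one wanted a self-contained proof, would be deriving Prékopa's theorem itself from the Prékopa--Leindler inequality, but that is orthogonal to the random matrix content of the paper and can safely be quoted.
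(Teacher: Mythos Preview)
Your argument is correct and is precisely the standard route via Pr\'ekopa's theorem. Note, however, that the paper does not actually supply a proof of this proposition: it simply states it and refers the reader to \cite{well}. So there is no ``paper's own proof'' to compare against; what you have written is essentially the proof one finds in that reference, and it would serve perfectly well if the authors had chosen to include it. The only cosmetic point is that the affine-transformation step in part~(i) requires $t\in(0,1)$ for non-singularity, but the endpoints $t\in\{0,1\}$ are trivial since then $z$ equals $x$ or $y$.
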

Further it is also well-known that   log-concave measures have sub-exponential tails, see \cite{cule2010theoretical}. Hence, all moments of its components exist. We  then recall the following fundamental result of  Gromov and Milman \cite{GromovMilman}; see also \cite{LV}.
\begin{thm}[Lipschitz concentration]\label{Lip Theorem}
	 For any $L$-Lipschitz function $g:\mathbb{R}^n\to \mathbb{R}$, and isotropic log-concave  $\mathbf{y}\in \mathbb{R}^n$ with density  $\rho$,
	 \begin{align}\label{Lip concentration proba}
	 	\mathbb{P} (|g(\mathbf{y}) - \E g(\mathbf{y})| > cLt) \le \mathrm{e}^{-t\psi_\rho},
	 \end{align}
	 where $\psi_\rho$ is the Cheeger constant of $\rho$ defined by
	 \begin{align*}
	 	\psi_\rho := \inf_{A \in \mathbb{R}^n} \frac{\int_{\partial A} \rho(x) \mathrm{d}x}{\min \left\{ \int_{A} \rho(x)\mathrm{d}x ,\int_{\mathbb{R}^n \backslash A} \rho(x)\mathrm{d}x\right\}}. 
	 \end{align*}
\end{thm}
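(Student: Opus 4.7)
The plan is to reduce the statement to the classical Gromov--Milman isoperimetric concentration argument, which converts a lower bound on the Cheeger constant into an exponential concentration inequality. By linear rescaling we may assume $L = 1$, so that $g$ is $1$-Lipschitz; the factor of $L$ will then be restored by replacing $t$ by $t/L$. It suffices to establish concentration around a median $M_g$ of $g$ under $\rho$, since the passage from median to mean is a standard by-product: integrating the exponential tail bound yields $|M_g - \E g| \lesssim \psi_\rho^{-1}$, which can be absorbed into the bound (\ref{Lip concentration proba}) by adjusting the constant $c$.

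First, set $A := \{x \in \mathbb{R}^n : g(x) \le M_g\}$, so that $\rho(A) \ge 1/2$ by definition of the median. For $s \ge 0$ let $A_s := \{x : d(x,A) \le s\}$ denote the closed $s$-neighborhood of $A$ in Euclidean metric. Because $g$ is $1$-Lipschitz, $A_s \subseteq \{g \le M_g + s\}$, so
\begin{align*}
\mathbb{P}(g(\mathbf{y}) > M_g + s) \;\le\; \rho(\mathbb{R}^n \setminus A_s).
\end{align*}
The key step is therefore to show that the complement of the $s$-enlargement of a set of measure at least $1/2$ decays exponentially at rate $\psi_\rho$. Let $v(s) := \rho(\mathbb{R}^n \setminus A_s)$. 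Since $A_s$ is monotone in $s$, $v$ is nonincreasing, and a standard co-area (or outer Minkowski content) computation gives
\begin{align*}
v'(s) \;\le\; -\,\rho\bigl(\partial A_s\bigr)
\end{align*}
for a.e.\ $s$. By the definition of $\psi_\rho$ applied to the set $A_s$ (whose measure stays $\ge 1/2$, so the minimum on the denominator is $v(s)$), we obtain $\rho(\partial A_s) \ge \psi_\rho v(s)$, and hence the differential inequality $v'(s) \le -\psi_\rho v(s)$. Grönwall then yields $v(s) \le v(0) e^{-\psi_\rho s} \le \tfrac{1}{2} e^{-\psi_\rho s}$. The symmetric argument applied to $-g$ and the set $\{g \ge M_g\}$ gives the matching lower-tail bound, and combining them produces $\mathbb{P}(|g(\mathbf{y}) - M_g| > s) \le e^{-\psi_\rho s}$ up to a harmless constant.

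The main obstacle is the rigorous justification of the co-area step $v'(s) \le -\rho(\partial A_s)$ and the use of the Cheeger inequality for the specific family of enlargements $A_s$. Strictly speaking one works with the lower Minkowski content $\rho^+(A_s) := \liminf_{h \downarrow 0} h^{-1}(\rho(A_{s+h}) - \rho(A_s))$ and observes that the Cheeger definition (applied to open sets with Lipschitz boundary, then extended by approximation) yields $\rho^+(A_s) \ge \psi_\rho \min(\rho(A_s), v(s))$; once this is in place, the Grönwall argument above is immediate. To conclude, we convert the concentration around the median into concentration around the mean as indicated, giving (\ref{Lip concentration proba}) with a suitable universal constant $c$, and finally undo the normalization $L = 1$ to obtain the stated Lipschitz dependence on the left-hand side.
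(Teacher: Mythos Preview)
The paper does not give its own proof of this theorem: it is simply stated as the ``fundamental result of Gromov and Milman'' with citations to \cite{GromovMilman} and \cite{LV}, and then used as a black box. Your proposal is precisely the classical Gromov--Milman argument (pass to a median, expand a sub-level set of the $1$-Lipschitz function, use the Cheeger lower bound on boundary measure to get a differential inequality for the tail, and integrate), which is correct and is exactly what one would find in those references; so there is nothing to compare against in the paper itself.
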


The optimal lower bound for $\psi_\rho$ is a long standing problem in the field of high-dimensional convex geometry. It is conjectured that the lower bound for $\psi_\rho$ should be dimensional-free. 
In the very recent breakthrough of this problem \cite{chen, KL22}, this bound is improved to of order $\log^{-5} n$.  
Therefore, using this estimate of the Cheeger constant gives a bound of $\mathrm{e}^{-ct/\log^5 n}$ at the RHS of (\ref{Lip concentration proba}) for any isotropic log-concave density $\rho$ in $\mathbb{R}^n$. The result can be summarised as follows,
\begin{corollary}\label{Lip corollary}
	 For any $L$-Lipschitz function $g:\mathbb{R}^n\to \mathbb{R}$, and isotropic log-concave  $\mathbf{y}\in \mathbb{R}^n$,
	 \begin{align}
	 	\mathbb{P} (|g(\mathbf{y}) - \E g(\mathbf{y})| > cLt) \le \mathrm{e}^{-ct/(\log n)^5}.
	 \end{align}
\label{Lip concentration}
\end{corollary}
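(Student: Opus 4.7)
The plan is to combine Theorem \ref{Lip Theorem} with the recent (nearly) resolution of the KLS conjecture from \cite{chen, KL22}. Applying Theorem \ref{Lip Theorem} to the $L$-Lipschitz function $g$ and the isotropic log-concave vector $\mathbf{y}\in\mathbb{R}^n$ with density $\rho$ immediately yields
\begin{equation*}
	\mathbb{P}\big(|g(\mathbf{y}) - \E g(\mathbf{y})| > cLt\big) \le \mathrm{e}^{-t\psi_\rho}.
\end{equation*}
The task then reduces to producing a lower bound on the Cheeger constant $\psi_\rho$ that holds uniformly over all isotropic log-concave densities on $\mathbb{R}^n$ (so that the resulting concentration estimate does not depend on $\rho$).

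For this second step, I would quote the Klartag--Lehec bound from \cite{KL22}, which refines the earlier $n^{-o(1)}$ estimate of \cite{chen}: there exists a universal constant $c_0>0$ such that $\psi_\rho\ge c_0/(\log n)^5$ for every isotropic log-concave density $\rho$ on $\mathbb{R}^n$. Substituting this into the exponent above and absorbing $c_0$ into the numerical constant $c$ yields
\begin{equation*}
	\mathbb{P}\big(|g(\mathbf{y}) - \E g(\mathbf{y})| > cLt\big) \le \mathrm{e}^{-ct/(\log n)^5},
\end{equation*}
which is precisely the claim.

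Since the statement is a one-line consequence of two deep inputs, there is no genuine obstacle in the derivation itself; all the difficulty is hidden in the cited KLS-type estimates. The only verifications to carry out are entirely formal: (i) the Klartag--Lehec bound applies to \emph{every} isotropic log-concave density (which is the content of the main theorem of \cite{KL22}), so the resulting constant in the exponent is independent of $\rho$ and of $n$; and (ii) the Lipschitz constant $L$ and the parameter $t$ enter only through the scaling $cLt$ already present in Theorem \ref{Lip Theorem}, so no further adjustment is needed.
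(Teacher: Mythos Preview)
Your proposal is correct and matches the paper's own argument exactly: the paper simply notes that the Klartag--Lehec bound $\psi_\rho \gtrsim (\log n)^{-5}$ from \cite{KL22}, when inserted into the Gromov--Milman inequality of Theorem~\ref{Lip Theorem}, immediately yields the stated estimate. There is nothing to add.
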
 
\begin{remark}
	We remark here that although the KLS conjecture is still open, the current bound $\psi_\rho \gtrsim \log^{-5} n$ suffices for our purpose in this paper.
\end{remark}

\section{Local MP law}\label{Sec Strong local MP law}
Our goal in this section is to establish the  local MP law. The following is the main result in this section, which is also our main technical input for establishing other main theorems. Set the domain 
	\begin{align}\label{spectral domain}
			D(\epsilon):=\{ z \in \mathbb{C}: \mathbf{1}_{y<1}(\lambda_-/5) \le E \le 5\lambda_{+}, N^{-1+\epsilon}\le \eta \le 10(1+y) \}.
		\end{align}
\begin{thm}[Local MP law] \label{thm. local law}
	Suppose that Assumption \ref{assu.1.1} holds. Let $\epsilon > 0$ be given. We have
	\begin{itemize}
		\item[(i)] The Stieltjes transform of the empirical spectral distribution of $H$ satisfies
		\begin{align}\label{trace gap}
		 |m_{2N}(z) - m_{2}(z)| \prec   \frac{1}{N\eta},
		\end{align}
		uniformly on $D(\epsilon)$.  
			
		\item[(ii)] The individual matrix elements of the Green function satisfy
		\begin{align}\label{entry wise local law}
			 |[\mathcal{G}_{2}(z)]_{ij} - m_{2}(z)\delta_{ij}| \prec  \sqrt{\frac{\Im m_{2}(z)}{N\eta} } + \frac{1}{N\eta}
		\end{align}
		uniformly on $D(\epsilon)$. 
		\end{itemize}
\label{Strong MP law}\end{thm}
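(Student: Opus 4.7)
The plan is to follow the Pillai-Yin strategy for local MP laws, with the Hanson-Wright type concentration in Lemma \ref{Large deviation lemma} as the essential model-specific input. The proof splits into two parts: the entrywise law (ii), obtained by deriving and stabilizing an approximate self-consistent equation for the diagonal Green function entries, and the averaged trace law (i), which upgrades the naive bound $\Psi(z):=\sqrt{\Im m_2(z)/(N\eta)}+1/(N\eta)$ to the optimal $1/(N\eta)$ via fluctuation averaging.

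First I would apply the Schur complement formula to the $(N\times N)$ matrix $\mathcal{H}-zI$ to write, for each column index $i$,
\[
[\mathcal{G}_2(z)]_{ii}^{-1} = -z\bigl(1 + x_i^* \mathcal{G}_1^{(i)}(z) x_i\bigr),
\]
with $\mathcal{G}_1^{(i)}(z) = (X^{(i)}(X^{(i)})^* - zI)^{-1}$ independent of $x_i$; analogous resolvent identities express the off-diagonal entry $[\mathcal{G}_2]_{ij}$ through a quadratic form $x_i^* B x_j$ with $B$ independent of $x_i,x_j$. Since by Proposition \ref{iso prop}(ii) each $x_i$ is isotropic log-concave, Lemma \ref{Large deviation lemma} applied with $B=\mathcal{G}_1^{(i)}(z)$ and the Ward identity $\|\mathcal{G}_1^{(i)}\|_{\mathrm{HS}}^2=\Im\Tr \mathcal{G}_1^{(i)}/\eta$ gives
\[
\Bigl|x_i^* \mathcal{G}_1^{(i)}(z) x_i - \tfrac{1}{N}\Tr \mathcal{G}_1^{(i)}(z)\Bigr| \prec \sqrt{\tfrac{\Im m_{1}(z)}{N\eta}}, \qquad |x_i^* B x_j|\prec \tfrac{\|B\|_{\mathrm{HS}}}{N}\ (i\neq j).
\]
Rank-one interlacing replaces $\tfrac{1}{N}\Tr \mathcal{G}_1^{(i)}$ by $y\,m_{2N}(z)+\tfrac{y-1}{z}$ up to $O(1/N\eta)$, producing the approximate self-consistent equation
\[
z m_{2N}^2 + (z+1-y)\,m_{2N} + 1 \;=\; \widetilde{\mathcal{E}}(z),\qquad |\widetilde{\mathcal{E}}(z)|\prec \Psi(z),
\]
to be compared with the exact equation (\ref{selfconeqt}) for $m_2$.

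Next I would invoke standard stability of the MP self-consistent equation away from the spectral edges to convert $|\widetilde{\mathcal{E}}|\prec\Psi$ into an effective bound on $|m_{2N}-m_2|$, which in turn feeds back into the entrywise estimate (\ref{entry wise local law}) through the Schur expansion. The argument must be bootstrapped in $\eta$: starting from $\eta=O(1)$, where both sides of (\ref{entry wise local law}) are controlled trivially, I would descend along a polynomial grid of $z$-values using Lipschitz continuity of the Green function (at Lipschitz scale $1/\eta^2$) to close a continuity induction, eventually reaching $\eta\geq N^{-1+\epsilon}$ on the full domain $D(\epsilon)$. This establishes part (ii). For part (i), I would then run the fluctuation averaging lemma in the template of \cite{EYY,YinPi,BPZ2}: the key point is that $m_{2N}-m_2$ is, up to negligible deterministic terms, an average $N^{-1}\sum_i \mathcal{E}_i$ of quadratic-form errors, each of size $\Psi$, but the errors exhibit weak dependence, and an iterated cumulant / martingale-type expansion yields the extra $\sqrt{\Im m_2/(N\eta)}$ cancellation to reach the sharp $1/(N\eta)$ rate.

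The main obstacle is entirely concentrated in Lemma \ref{Large deviation lemma} and in the multilinear extensions of it needed by the fluctuation averaging step; once the quadratic-form concentration with the sharp Hilbert-Schmidt scaling $\|B\|_{\mathrm{HS}}/N$ is available, the rest of the proof is a faithful adaptation of the i.i.d.\ case. The delicate point is that the Cheeger bound input from \cite{chen,KL22} carries poly-logarithmic factors in $N$, but these are absorbed without loss into the $N^\epsilon$ freedom of stochastic domination, so the final estimates retain the optimal polynomial rate. A minor technical point is to verify that the various quadratic forms arising in the fluctuation-averaging expansion (which involve several independent log-concave columns simultaneously) still fall under the scope of Corollary \ref{Lip corollary} via the preservation properties in Proposition \ref{iso prop}(i).
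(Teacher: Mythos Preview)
Your proposal is correct and matches the paper's approach: the paper's proof simply invokes Theorem~3.6 of \cite{YinPi} as a black box, observing that the only model-specific input required is the large deviation Lemma~\ref{Large deviation lemma} for linear and quadratic forms of the log-concave columns. Your sketch unpacks the internal structure of that cited machinery (Schur complement, self-consistent equation, stability, bootstrap in $\eta$, fluctuation averaging), so the two are the same argument at different levels of detail; one minor remark is that the fluctuation averaging in \cite{YinPi} needs only column independence together with the single-column concentration of Lemma~\ref{Large deviation lemma}, not genuinely multilinear extensions, so your last caveat is unnecessary.
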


\begin{remark}
 	We remark here (\ref{trace gap}) can be improved when $\lambda_+ \le  E \le 5\lambda_+$. Similar to the analysis in \cite{YinPi}, the following stronger estimate holds for $\lambda_+ \le  E \le 5\lambda_+$,
\begin{align}\label{eta star regular 2}
	|m_{2N}(z) - m_2(z)| \prec \frac{1}{N(\kappa +  \eta)} + \frac{1}{(N\eta)^2\sqrt{\kappa + \eta}},
\end{align}
where $\kappa = |E - \lambda_+|$ is the distance to the rightmost edge.
\end{remark}

Based on Theorem 3.6 in \cite{YinPi} and the analysis therein, we know that to prove Theorems \ref{Strong MP law} and also \ref{rigidity}, it suffices to prove the following large deviation lemma. 
\begin{lemma}[Large deviation lemma]\label{Large deviation lemma}
	Suppose that Assumption \ref{assu.1.1} holds. For any fixed $k \le N$, and for any deterministic $A_i \in \mathbb{C}$ and $B_{ij} \in \mathbb{C}$, we have
	\begin{align}
		&\bigg| \sum_{i=1}^M x_{ik} A_i \bigg| \prec \frac{1}{\sqrt{N}}\bigg( \sum_{i}|A_{i}|^2 \bigg)^{1/2} ,\label{Linear form} \\
		&\bigg| \sum_{i,j=1}^M x_{ik} B_{ij} x_{jk} -  \frac{1}{N}\sum_{i=1}^M B_{ii} \bigg| \prec \frac{1}{N}\bigg( \sum_{i,j}|B_{ij}|^2 \bigg)^{1/2}.\label{Quadratic form}
	\end{align}
\end{lemma}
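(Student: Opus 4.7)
The plan is to derive both bounds from Corollary \ref{Lip corollary} applied to the vector $q_k := \sqrt{N}\, x_k \in \mathbb{R}^M$, which by Proposition \ref{iso prop}(ii) is isotropic log-concave. Because the $\prec$ notation tolerates factors of $N^\epsilon$, the polylogarithmic-rate bound $\exp(-ct/(\log M)^5)$ from Corollary \ref{Lip corollary} will be more than sufficient.

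For the linear form \eqref{Linear form}, the function $q \mapsto \sum_i q_i A_i$ is linear with Lipschitz constant $(\sum_i |A_i|^2)^{1/2}$ and has mean zero by isotropy. Corollary \ref{Lip corollary} then yields
\[
\mathbb{P}\Big(\Big|\sum_{i=1}^M x_{ik} A_i\Big| > N^{\epsilon}\cdot N^{-1/2}\big(\textstyle\sum_i |A_i|^2\big)^{1/2}\Big) \le \exp\!\big(-cN^{\epsilon}/(\log M)^5\big),
\]
which is smaller than any negative power of $N$, yielding \eqref{Linear form}.

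For the quadratic form \eqref{Quadratic form}, which after rescaling reads $|q_k^* B q_k - \mathrm{tr}(B)| \prec \|B\|_F$ with $\|B\|_F^2 := \sum_{ij}|B_{ij}|^2$, I would follow the Hanson--Wright strategy of Adamczak \cite{Ada}. After symmetrizing $B$, decompose $B = B_d + B_o$ into its diagonal and off-diagonal parts. For the off-diagonal part $B_o$, I would apply a standard decoupling inequality to bound $\mathbb{P}(|q^*B_o q|>t)$ by a constant multiple of $\mathbb{P}(|q^* B_o q'|>t/C)$, where $q'$ is an independent copy of $q$. Conditional on $q'$, the function $q\mapsto q^*B_o q'$ is linear with Lipschitz constant $\|B_o q'\|_2$, so Corollary \ref{Lip corollary} gives a sub-exponential conditional tail. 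A second application of Corollary \ref{Lip corollary} to the $\|B_o\|_{(2,2)}$-Lipschitz function $q'\mapsto \|B_o q'\|_2$, combined with the identity $\mathbb{E}\|B_o q'\|_2^2 = \|B_o\|_F^2$, yields $\|B_o q'\|_2\prec \|B_o\|_F$, which ultimately produces $|q^* B_o q| \prec \|B_o\|_F$.

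The main obstacle is the diagonal contribution $\sum_i B_{ii}(q_i^2 - 1)$, because the coordinates $q_i$ of a log-concave vector are generally not independent, so the classical independent-entry Hanson--Wright argument does not apply. As in \cite{Ada}, I would handle this via the moment method: split $B_d$ into its positive and negative parts and write each piece as $\|Dq\|_2^2 - \mathbb{E}\|Dq\|_2^2$ for an appropriate diagonal $D$. Lipschitz concentration of the $\|D\|_{(2,2)}$-Lipschitz map $q\mapsto \|Dq\|_2$ produces sharp moment bounds on $\bigl|\|Dq\|_2^2 - \mathbb{E}\|Dq\|_2^2\bigr|^p$ controlled by $\|B_d\|_F$ up to polylogarithmic factors; optimizing over $p$ and applying Chebyshev's inequality yields the Hanson--Wright tail for the diagonal part. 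All polylogarithmic losses inherited from the KLS bound $\psi_\rho \gtrsim (\log M)^{-5}$ are absorbed by the $\prec$ notation, completing the proof.
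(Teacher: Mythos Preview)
Your treatment of the linear form \eqref{Linear form} matches the paper exactly.

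For the quadratic form your route diverges from both the paper and from what \cite{Ada} actually does, and the decoupling step is a genuine gap. The inequality $\mathbb{P}(|q^\ast B_o q|>t)\le C\,\mathbb{P}(|q^\ast B_o q'|>t/C)$ that you invoke is a theorem when the coordinates of $q$ are \emph{independent} (de~la~Pe\~na--Montgomery-Smith), but no such decoupling result is available for a general log-concave vector with correlated coordinates; avoiding this obstacle is precisely the point of Adamczak's argument. There is a secondary issue with your diagonal piece: Lipschitz concentration of $q\mapsto\|Dq\|_2$ controls $\bigl|\|Dq\|_2^2-\mathbb{E}\|Dq\|_2^2\bigr|$ only by quantities of size $\|D\|_{(2,2)}\|D\|_F=(\max_i B_{ii})^{1/2}(\sum_i B_{ii})^{1/2}$, not by the claimed $\|B_d\|_F=(\sum_i B_{ii}^2)^{1/2}$. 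For $B_{ii}=i^{-1/2}$ the former is of order $M^{1/4}$ while the latter is of order $(\log M)^{1/2}$, a polynomial gap that $\prec$ cannot absorb.

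The paper instead follows Adamczak's actual argument (reproduced in Appendix~\ref{Sec. Hanson-Wright inequality}), which treats the whole quadratic $\varphi(q)=q^\ast Bq$ at once with no diagonal/off-diagonal split and no decoupling. The key observation is that $q\mapsto\|\nabla\varphi(q)\|_2=2\|Bq\|_2$ is itself $2\|B\|$-Lipschitz, so Corollary~\ref{Lip corollary} shows that the convex set $\mathcal{B}=\{q:\|\nabla\varphi(q)\|_2\le 2\|B\|_{HS}+\sqrt{t\|B\|}\}$ has complement of probability at most $\exp(-c\psi_\rho\sqrt{t/\|B\|})$. One then replaces $\varphi$ by its convex Lipschitz extension $\tilde\varphi(q)=\max_{x\in\mathcal{B}}\{\langle\nabla\varphi(x),q-x\rangle+\varphi(x)\}$, which agrees with $\varphi$ on $\mathcal{B}$ and is globally $(2\|B\|_{HS}+\sqrt{t\|B\|})$-Lipschitz; a second application of Corollary~\ref{Lip corollary} to $\tilde\varphi$, together with a routine median-to-mean replacement, gives the Hanson--Wright tail \eqref{121701} directly. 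Only Lipschitz concentration is used, never independence of the coordinates of $q$.
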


\begin{proof}[Proof of Theorems \ref{Strong MP law} and  \ref{rigidity}] By Theorem 3.6 in \cite{YinPi}, with the aid of Lemma \ref{Large deviation lemma}, one can conclude Theorems \ref{Strong MP law} and  \ref{rigidity}. 
\end{proof}

Hence, what remains is the proof of Lemma \ref{Large deviation lemma}. 
\begin{proof}[Proof of Lemma \ref{Large deviation lemma}]
	For any isotropic log-concave random vector $q \in \mathbb{R}^{M}$ , let $g(q) := \sum_{i=1}^M q_{i}A_i$. Notice that the Lipschitz constant of $g$ is apparently $\|A\|_2=\sqrt{\sum_{i}|A_{i}|^2 }$.  Then (\ref{Linear form}) follows from Corollary \ref{Lip concentration} directly. 
The large deviation for quadratic form (\ref{Quadratic form}) follows from a Hanson-Wright type inequality for the log-concave random vectors which can be derived using the arguments in \cite{Ada}  based on Corollary \ref{Lip concentration}, with only slight modification. As pointed out in \cite{Ada} (Remark 2.8),  one can modify accordingly the tail probability of the resulting Hanson-Wright type inequality under the different assumptions on rate of decay for the tail probability of the Lipschitz concentration.  To be more precisely, we can show that for any $B = (B_{ij})_{i,j=1}^{M}$ and for every $t > 0$, there exists a constant $C > 0$ such that
	\begin{align}
		\mathbb{P}\left( \left| \sum_{i,j=1}^M x_{ik} B_{ij} x_{jk} -  \frac{1}{N}\sum_{i=1}^M B_{ii} \right|  \ge t \right) \le 2 \exp \left( -\frac{\psi_\rho}{C} \min \left( \frac{Nt}{\|B \|_{HS}}, \left( \frac{Nt}{\| B\|} \right)^{1/2} \right) \right). \label{121701}
	\end{align}
	For the convenience of the reader, in Appendix we briefly reproduce Adamczak's argument to prove (\ref{121701}), under a different tail probability of the Lipschitz concentration. 
Considering the current best bound $\psi_\rho \gtrsim \log^{-5} M$ with $t = N^{\epsilon-1}\|B \|_{HS}$ for arbitrary small $\epsilon >0$, we can conclude the proof of (\ref{Quadratic form}). 
\end{proof}

\section{Proof of Theorem \ref{universality thm}}\label{Sec. comparison}

In this section, we prove Theorem \ref{universality thm},  with the unconditional assumption (cf. Definition \ref{def.unconditional}). 
For any $t \in [0,1]$, we define
\begin{align}\label{OU process}
	X(t) := \sqrt{1-t} X + \sqrt{t}X^{\mathbf{w}},
\end{align}
where $X(0) = X$ and $X^{\mathbf{w}}$ is an $M \times N$ random matrix with i.i.d $N(0,1/N)$ Gaussian entries. We then set $H(t)=X(t)X(t)^*$. First, the edge universality of Dyson Brownian motion by Landon-Yau in \cite{LY} indicates that when the spectrum of the initial $H$ is sufficiently regular, then after $t\geq N^{-\frac13+\varepsilon}$, the local equilibrium at the edge can be achieved. The covariance matrix adaptation of this dynamical result of edge universality was derived in \cite{DY}, and the regularity of the initial spectrum is guaranteed by the spectral rigidity. Hence, we will have the edge universality for $H(t)$ with $t_0=N^{-\frac13+\epsilon}$. Then  Green function comparison between $H\equiv H(0)$ and $H(t_0)$ allows us to prove the edge universality in Theorem \ref{universality thm}. In the sequel, we show the detailed discussion.

First, notice that the random matrix $X(t)$ is always unconditional isotropic log-concave (cf. Proposition \ref{iso prop}) for all $t\in[0,1]$. Our argument can be divided into two steps, 
\begin{itemize}
	\item[(\textit{i})] Edge Universality for $X(t)X(t)^*$ for $t\geq t_0:=N^{-\frac13+\epsilon}$.
	\item[(\textit{ii})] Comparison between the extreme eigenvalue distributions of $X(t_0)X(t_0)^*$ and $XX^*$.
\end{itemize}

Having established local MP law for log-concave ensembles, which ensures the square root behavior of the spectral density at the edge, the convergence analysis for the extreme eigenvalue of matrix ensemble $X(t)$ is identical to the analysis in \cite{DY} (Proof of Theorem III.2). For the convenience of the reader, we will sketch the proof of step (\textit{i}) in Section \ref{Proof of Universality Step 1}. 

The main focus of the proof  lies in step (\textit{ii}). The basic framework to prove step (\textit{ii}) is an adaptation of the Green function comparison in \cite{YinPi, YinPib}, which has been used to prove universality results of sample covariance/correlation matrices and many other different random matrix models. However, previous comparison techniques used for sample covariance/correlation matrix \cite{YinPi} were hindered by the high-order correlation structure of the log-concave ensembles. To circumvent this, we give an estimation on the mixed moments of the coordinates of the unconditional isotropic log-concave random vector.  The detailed proof is given in Section \ref{Proof of Universality Step 2}.

\subsection{Edge Universality of $X(t_0)X(t_0)^*$}\label{Proof of Universality Step 1}
The regularity conditions for the square root behavior near the spectral edge is stated as the following. 
\begin{defin}[$\eta_*$-regular]
	Let $\eta_*$ be a deterministic parameter satisfying $\eta_* := N^{-\phi_*}$ for some constant $0 < \phi_* \le 2/3$. We say an $M \times M$ matrix $V$ is $\eta_*$-regular around the right edge $\lambda_+^V := \lambda_1(V)$ if the following properties hold for some constants $c_V$, $C_V > 0$.
	\begin{itemize}
		\item[(i)] For $z = E + \mathrm{i}\eta$ with $\lambda_+^V - c_V \le E \le \lambda_+^V$ and $\eta_* + \sqrt{\eta_*|\lambda_+^V-E|} \le \eta \le 10$, we have
		\begin{align}
			\frac{1}{C_V}\sqrt{|\lambda_+^V-E|+\eta} \le \Im m_V(E+\mathrm{i}\eta) \le C_V\sqrt{|\lambda_+^V-E|+\eta}.
		\end{align}
		\item[(ii)] For $z = E + \mathrm{i}\eta$ with $\lambda_+^V \le E \le \lambda_+^V + c_V $ and $\eta_*  \le \eta \le 10$, we have
		\begin{align}
			\frac{1}{C_V}\frac{\eta}{\sqrt{|\lambda_+^V-E|+\eta}} \le \Im m_V(E+\mathrm{i}\eta) \le C_V \frac{\eta}{\sqrt{|\lambda_+^V-E|+\eta}}.
		\end{align}
		\item[(iii)] We have $2c_V \le \lambda_+^V \le C_V/2$.
	\end{itemize}
	Here $m_V(z) := M^{-1} \sum_{i=1}^M(\lambda_i(V)-z)^{-1}, z \in \mathbb{C}^{+}$ denotes the Stieljes transform of $V$.
\end{defin}
Recall $t_0 = N^{-1/3+\epsilon}$ for some small constant $\epsilon > 0$. 
We denote $\tilde{X} \equiv X(t_0)$ for simplicity. The corresponding sample covariance matrix is defined as $\tilde{H}:= \tilde{X} \tilde{X} ^*$. Note that $X(t)$ from (\ref{OU process}) can be regarded as a rectangular matrix Dyson Brownian Motion starting from $X$. It is routine to prove that the $\eta^*$-regularity holds with high probability for our initial matrix $H$, with the aid of the local MP law and rigidity in Theorems \ref{thm. local law} and \ref{rigidity}.

With the $\eta^*$-regularity, by Theorem 4 in \cite{DY}, we can conclude that there exists a positive parameter $\gamma_0$ of order $1$ such that for any $x \in \mathbb{R}$,
\begin{align}\label{intermediate result 1}
	\lim_{N \to \infty} \mathbb{P}\left( \gamma_0N^{2/3}(\lambda_1(\tilde{H}) - \lambda_{+,t_0} ) \le x \right) = \lim_{N \to \infty} \mathbb{P}\left( N^{2/3}(\lambda_1(\text{GOE}) - 2 ) \le x \right), 
\end{align}
for all $x \in \mathbb{R}$, where we denote $\lambda_1(\text{GOE})$ as the largest eigenvalue of the Gaussian Orthogonal Ensemble. Here we clarify that $\lambda_{+,t_0}$ is the right edge of $\rho_{H,t_0}$, which is the rectangular free convolution of $\rho_{0} := M^{-1}\sum_{i=1}^M\delta_{\lambda_i(H)}$ with MP law at time $t_0$. Following the analysis in \cite{DY} (Appendix A, proof of Theorem III.2), one can further replace $\lambda_{+,t_0}$ with $\lambda_+$ in (\ref{intermediate result 1}) by showing that
\begin{align*}
	N^{2/3}|\lambda_{+,t_0} - \lambda_+| \overset{\mathbb{P}}{\rightarrow} 0.
\end{align*}
We omit the details. Further notice that $\gamma_0$ can be determined by the square root behavior of MP law near the edge, with the well-known result of Tracy-Widom limit of Wishart matrix, we can obtain for any $x \in \mathbb{R}$,
\begin{align}\label{result 1}
	\lim_{N \to \infty} \mathbb{P}^{\tilde{H}}\left( N^{2/3}(\lambda_1(\tilde{H}) - \lambda_{+} ) \le x \right) = \lim_{N \to \infty} \mathbb{P}^{W}\left( N^{2/3}(\lambda_1(W) - \lambda_{+} ) \le x \right)
\end{align}
This completes the proof for step (\textit{i}).

\subsection{Green function comparison between  $\tilde{X}$ and $X$}\label{Proof of Universality Step 2}
Recall $\tilde{X} \equiv X(t_0)$ with $t_0 = N^{-1/3+\epsilon}$. Notice that the isotropic log-concavity property is preserved when running the Brownian Motion (\ref{OU process}). Hence, Theorems \ref{rigidity} and  \ref{thm. local law} still hold for $\tilde{X}$. Our goal in this section is to prove the following proposition. 
\begin{prop} \label{prop.universality} Under the assumption of Theorem \ref{universality thm}, we have 
\begin{align}\label{result 2}
	\lim_{N \to \infty} \mathbb{P}^{\tilde{H}}\left( N^{2/3}(\lambda_1(\tilde{H}) - \lambda_{+} ) \le x \right) = \lim_{N \to \infty} \mathbb{P}^{H}\left(N^{2/3}(\lambda_1(H) - \lambda_{+} ) \le x  \right).
\end{align}
\end{prop}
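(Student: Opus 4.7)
The plan is to adapt the Green function comparison of Pillai--Yin \cite{YinPi}, using the unconditional symmetry to replace the entrywise independence used there. The first step is to linearize the event $\{\lambda_1(H)\le E\}$ at $E = \lambda_+ + xN^{-2/3}$ via a smoothed indicator $\chi_E$ of width $N^{-2/3+\delta}$ composed with a smooth cutoff $K$, so that
\[
\mathbb{P}\bigl(N^{2/3}(\lambda_1(H)-\lambda_+)\le x\bigr) = \mathbb{E}\,K\!\left(\mathrm{Tr}\,\chi_E(H)\right) + o(1),
\]
with an analogous identity for $\tilde H$. The required precision is supplied by the sharpened edge bound (\ref{eta star regular 2}) together with the rigidity in Theorem \ref{rigidity}, both of which apply to $H$ and $\tilde H$ since both arise from isotropic log-concave ensembles (Proposition \ref{iso prop}). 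A standard Helffer--Sj\"ostrand representation then re-expresses $\mathrm{Tr}\,\chi_E(H)$ through $\mathrm{Im}\,\mathrm{Tr}\,\mathcal{G}_1(z)$ at spectral parameters of imaginary part at least $N^{-2/3-\delta}$, and the task reduces to showing
\[
\mathbb{E}\,F\!\bigl(\mathrm{Im}\,\mathrm{Tr}\,\mathcal{G}_1(z)\bigr) - \mathbb{E}\,F\!\bigl(\mathrm{Im}\,\mathrm{Tr}\,\tilde{\mathcal{G}}_1(z)\bigr) = o(1)
\]
for every smooth bounded $F$ and $z$ in that edge regime.

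For the comparison I would run a column-wise Lindeberg swap. Let $X_k$ be the matrix whose first $k$ columns are drawn from $\tilde X$ and whose remaining $N-k$ columns are drawn from $X$, with $H_k = X_k X_k^*$. Telescoping the total difference into $N$ one-column swaps and using the Schur complement identity to isolate the dependence of the resolvent of $H_k$ on its single differing column, I would Taylor-expand in that column up to fourth order. The Taylor remainder is controlled in the standard manner of \cite{YinPi} by the Hanson--Wright bound (\ref{Quadratic form}), the linear-form bound (\ref{Linear form}), and the entrywise local law (\ref{entry wise local law}), producing a per-column error of order $N^{-3/2-c}$ and hence a total error $o(1)$, provided the expectations of all Taylor monomials of degree at most four match between $X$ and $\tilde X$ up to the corresponding precision.

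The main obstacle, and the place where the unconditional assumption is essential, is exactly this matching: for a generic log-concave law the mixed moments of the entries of a column carry no useful structure, which would destroy the comparison. Each Taylor coefficient has the schematic form $\mathbb{E}\bigl[\prod_\alpha x_{i_\alpha k}^{n_\alpha}\bigr]$ with $\sum_\alpha n_\alpha\le 4$, contracted against a deterministic tensor built from $\mathcal{G}_2^{(\{k\})}$. Under the unconditional assumption, choosing $\delta_{i_\alpha}=-1$ at any single index with odd $n_\alpha$ shows that every monomial with some odd $n_\alpha$ vanishes identically; the survivors reduce to $\mathbb{E}[x_{ik}^2]=N^{-1}$, $\mathbb{E}[x_{ik}^4]$, and $\mathbb{E}[x_{ik}^2 x_{jk}^2]$ with $i\neq j$, which are each of order $N^{-2}$ because one- and two-dimensional marginals of an isotropic log-concave law are isotropic log-concave (Proposition \ref{iso prop}) with uniformly bounded normalized fourth moment. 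Because $\tilde X = \sqrt{1-t_0}\,X + \sqrt{t_0}\,X^{\mathbf{w}}$ with $t_0 = N^{-1/3+\epsilon}$ and the cross terms of odd total degree vanish by unconditionality, the corresponding moments for $\tilde X$ differ from those of $X$ only by $O(t_0 N^{-2})$. After contraction with the deterministic tensor, whose entries are controlled by the local law (\ref{entry wise local law}), these mismatches sum in $k$ to $o(1)$, and combining with the edge universality of $\tilde H$ in (\ref{result 1}) completes the proof.
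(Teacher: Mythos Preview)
Your overall strategy---reduce to a Green function comparison at the edge and run a column-wise Lindeberg swap---is the same as the paper's. But the argument you sketch does not close, for two related reasons.

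First, expanding ``up to fourth order'' in the column is not enough. In the paper's expansion (cf.\ (\ref{expansion1})) the terms that must be matched include $\eta_0\,\mathcal{Y}\mathcal{Z}^2$ and $\eta_0^3\,\mathcal{Y}^3$, where $\mathcal{Y}=\langle x_1,(\mathcal{G}_1^{(1)})^2 x_1\rangle$ and $\mathcal{Z}=\langle x_1,\mathcal{G}_1^{(1)} x_1\rangle$; these are degree~$6$ in the column entries, so the comparison involves mixed moments $\E[x_{k_1 1}^2 x_{k_2 1}^2 x_{k_3 1}^2]$, not only the degree-$4$ ones you list.

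Second, and more fundamentally, the bound ``moments of $\tilde X$ differ from those of $X$ by $O(t_0 N^{-2})$'' is correct but is not small enough to sum. Take already the degree-$4$ diagonal contribution (the $a=1$, $b=1$ case of (\ref{Expectation bound})): using only $|\E[x_{k_1 1}^2 x_{k_2 1}^2]-\E[\tilde x_{k_1 1}^2\tilde x_{k_2 1}^2]|=O(t_0 N^{-2})$ together with $|[(\mathcal{G}_1^{(1)})^2]_{k_1k_1}|\prec N^{1/3}$ and $|[\mathcal{G}_1^{(1)}]_{k_2k_2}|=O(1)$ gives a per-column error
\[
\eta_0\cdot N^2\cdot t_0 N^{-2}\cdot N^{1/3}\;\sim\;N^{-2/3+\epsilon},
\]
so that summing over the $N$ columns produces $N^{1/3+\epsilon}$, not $o(1)$. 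The degree-$6$ diagonal terms give the same count. The unconditional symmetry by itself only kills odd monomials; it does \emph{not} force $\E[x_{i1}^2 x_{j1}^2]$ to be close to $N^{-2}$, and an unconditional isotropic log-concave law can in principle have $\E[x_{i1}^2 x_{j1}^2]-N^{-2}$ of size $N^{-2}$ for individual pairs $(i,j)$.

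The missing ingredient is a quantitative decorrelation of the even moments. The paper supplies this through Lemma~\ref{approximation lemma}, proved via the Klartag thin-shell bound for \emph{unconditional} log-concave vectors (Lemma~\ref{thin shell}): for fixed $k_1,\dots,k_\ell$, at most $O(N^{2/3})$ indices $k_{\ell+1}$ satisfy $\bigl|\E\prod_{i\le \ell+1} x_{k_i1}^2 - N^{-1}\E\prod_{i\le \ell} x_{k_i1}^2\bigr|\ge N^{-(3\ell+4)/3}$. Iterating this lets one replace each $\E[\prod x_{k_i1}^2]$ by $N^{-\ell}$ up to errors that, when summed against the resolvent factors, yield the per-column bound $N^{-7/6+\epsilon}$ of Lemma~\ref{telescoping error lemma}. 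Without this step your comparison does not converge; you should replace the bare $O(t_0 N^{-2})$ estimate by an argument of this type.
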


\begin{proof}[Proof of Theorem \ref{universality thm}]
Combining (\ref{result 2}) and (\ref{result 1}), we arrive at
\begin{align*}
	\lim_{N \to \infty} \mathbb{P}^{H}\left( N^{2/3}(\lambda_1(H) - \lambda_{+} ) \le x \right) = \lim_{N \to \infty} \mathbb{P}^{W}\left(N^{2/3}(\lambda_1(W) - \lambda_{+} ) \le x  \right).
\end{align*}
This completes the proof of Theorem \ref{universality thm}.  
\end{proof}

As shown in \cite{YinPi}, to prove (\ref{result 2}), it suffices to establish a Green function comparisons for $\tilde{H}$ and $H$ on the edge, i.e., Proposition \ref{Green function comparison thm}.  Let $\tilde{\mathcal{G}}_{2}(z) : =  (\tilde{\mathcal{H}} - zI)^{-1}$ be the Green function of $\tilde{\mathcal{H}}$ with $\tilde{\mathcal{H}} := \tilde{X}^*\tilde{X}$, and the corresponding Stieltjes transform is denoted by $\tilde{m}_{2N}(z) := N^{-1} \Tr \tilde{\mathcal{G}}_{2}(z)$. 
\begin{prop}[Green function comparison] \label{Green function comparison thm}
	Let $F: \mathbb{R} \to \mathbb{R}$ be a function whose derivatives satisfy
	\begin{align*}
		\max_x |F^{\alpha}(x)|(|x|+1)^{-C_1} \le C_1, \quad  \alpha = 1,2,3,4
	\end{align*}
	for some constant $C_1 > 0$. Then there exist $\epsilon_0 > 0$, $N_0 \in \mathbb{N}$ and $\delta > 0$ depending only on $C_1$ such that for any $\epsilon < \epsilon_0$, $N \ge N_0$ and real numbers $E, E_1$ and $E_2$ satisfying 
	\begin{align*}
		|E - \lambda_+| \le N^{-2/3+\epsilon}, \quad |E_1 - \lambda_+| \le N^{-2/3+\epsilon}, \quad |E_2 - \lambda_+| \le N^{-2/3+\epsilon},
	\end{align*}
	and $\eta_0 = N^{-2/3-\epsilon}$, we have
	\begin{align}\label{comparison 1}
		|\E F(N\eta_0\Im \tilde{m}_{2N}(z)) - \E F(N\eta_0\Im m_{2N}(z)) | \le C N^{-\delta+C\epsilon}, \quad z = E + \mathrm{i}\eta_0,
	\end{align}
	and
	\begin{align}\label{comparison 2}
		\left| \E F\left( N\int_{E_1}^{E_2}\mathrm{d}y \Im \tilde{m}_{2N}(y+\mathrm{i}\eta_0) \right)- \E F\left( N\int_{E_1}^{E_2}\mathrm{d}y \Im m_{2N}(y+\mathrm{i}\eta_0) \right) \right|\le C N^{-\delta+C\epsilon}
	\end{align}
	for some constant $C$.
\end{prop}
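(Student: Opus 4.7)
The proof should follow the Green-function comparison framework of \cite{YinPi, YinPib}, adapted to the log-concave setting through the unconditional assumption. The plan is to swap the columns of $X$ with those of $\tilde X$ one at a time and control the error in $\E F(\cdot)$ at each replacement via a resolvent expansion plus moment matching.

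To set it up, I would define the interpolating matrices $X^{(\ell)}$ whose first $\ell$ columns are $\tilde x_1,\ldots,\tilde x_\ell$ and whose remaining $N-\ell$ columns are $x_{\ell+1},\ldots,x_N$, so that $X^{(0)}=X$ and $X^{(N)}=\tilde X$. Denoting by $m_{2N}^{(\ell)}$ the normalized trace of the resolvent of $X^{(\ell)*}X^{(\ell)}$, a telescoping decomposition reduces \eqref{comparison 1} to bounding
\begin{align*}
\sum_{\ell=1}^N\Big\{\E F\bigl(N\eta_0\Im m_{2N}^{(\ell)}(z)\bigr)-\E F\bigl(N\eta_0\Im m_{2N}^{(\ell-1)}(z)\bigr)\Big\}.
\end{align*}
Conditionally on all other columns, the $\ell$-th term depends on the column in question (either $x_\ell$ or $\tilde x_\ell$) only through the $\ell$-th row and column of $X^{(\ell)*}X^{(\ell)}$; this dependence is isolated by the Schur identity $[\mathcal{G}_2]_{\ell\ell}^{-1}=-z\bigl(1+x_\ell^*\mathcal{G}_1^{(\ell)}(z)x_\ell\bigr)$ (and its off-diagonal analogue), together with similar identities obtained by differentiating in $z$ to access $m_{2N}^{(\ell)}$. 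I would then Taylor-expand $F(\cdot)$ to fourth order in the quadratic forms of the form $x_\ell^*\mathcal{G}_1^{(\ell)}x_\ell$, using Lemma \ref{Large deviation lemma} to control both the concentration of these quadratic forms and the remainder.

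The critical ingredient is the moment comparison between $x_\ell$ and the Gaussian column $\tilde x_\ell$. Under the unconditional hypothesis, $\E\prod_i x_{i\ell}^{p_i}=0$ whenever some exponent $p_i$ is odd, exactly as for a centered Gaussian; consequently all first- and third-order terms in the Taylor expansion vanish identically for both ensembles. Second moments match by isotropy. The first nontrivial discrepancy appears at fourth order, where one must compare $\E[x_{i\ell}^2 x_{j\ell}^2]$ with the Wick value $N^{-2}(1+2\delta_{ij})$. Using Theorem \ref{thm. local law} to bound the coefficients (noting $|[\mathcal{G}_1^{(\ell)}]_{ij}|\prec 1$ and $\Im m_1(E+\mathrm{i}\eta_0)\sim N^{-1/3+O(\epsilon)}$ at the edge scale $\eta_0$), and Lemma \ref{Large deviation lemma} to handle the fourth-moment mismatches, the cumulative contribution from the $N$ replacements should be of order $N^{-\delta+C\epsilon}$, which is precisely \eqref{comparison 1}.

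The main obstacle is exactly this fourth-order matching: because the coordinates of $x_\ell$ are not independent, $\E[x_{i\ell}^2 x_{j\ell}^2]$ does not factorize and the term-by-term discrepancy with the Gaussian Wick value does not vanish. The remedy is to rewrite the fourth-order contribution in the form $\sum_{i,j}T_{ij}\E[\xi_i\xi_j]$ with $\xi_i:=x_{i\ell}^2-N^{-1}$ and $T_{ij}$ built from Green function entries, and recognize this as the second moment of the diagonal quadratic form $\sum_i B_{ii}x_{i\ell}^2$ for appropriate $B$; the Hanson-Wright inequality of Lemma \ref{Large deviation lemma}, which depends crucially on the poly-log Cheeger bound of \cite{chen, KL22}, provides the $N^{-2}\|B\|_{HS}^2$ variance control that substitutes for the factorization that would be available for independent coordinates. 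The integrated version \eqref{comparison 2} then follows from \eqref{comparison 1} by pushing the pointwise bound through the $E$-integral using the same contour argument as in \cite{YinPi, YinPib}, which does not interact with the structure of the underlying random vectors.
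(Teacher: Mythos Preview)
Your framework—column-by-column Lindeberg swap, telescoping, Schur identities to isolate the replaced column, Taylor expansion of $F$, and use of the unconditional symmetry to kill all odd-moment contributions—matches the paper exactly, as does the observation that isotropy handles the second-order ($a=1,b=0$) term.

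Where your sketch diverges, and has a genuine gap, is the mechanism for the higher-order moment mismatch. You locate the obstacle at fourth order and propose to bound $\sum_{i,j}T_{ij}\E[\xi_i\xi_j]$ (with $\xi_i=x_{i\ell}^2-N^{-1}$) by recognizing it as the variance of a diagonal quadratic form and applying Lemma~\ref{Large deviation lemma}. Two issues: first, the expansion the paper needs runs to \emph{sixth} order in the entries—the terms $\eta_0^a\prod_i\langle x_1,Y_ix_1\rangle\prod_j\langle x_1,Z_jx_1\rangle$ with $a+b\le 3$ give three-index sums $\sum_{k_1,k_2,k_3}^*\bar\Delta_{k_1k_2k_3}\mathcal{A}(k_1,k_2,k_3)$ with $\bar\Delta_{k_1k_2k_3}=\E\prod_{i=1}^3x_{k_i1}^2-\E\prod_{i=1}^3\tilde x_{k_i1}^2$, and your variance reading does not extend to a three-tensor; second, even at fourth order, writing $\sum_{i,j}T_{ij}\E[\xi_i\xi_j]=\E(\sum_iB_{ii}\xi_i)^2$ requires $T_{ij}=B_{ii}B_{jj}$, i.e.\ $T$ rank one, which you do not justify. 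The paper's device is different: an approximation lemma (Lemma~\ref{approximation lemma}) proved from Klartag's thin-shell bound for \emph{unconditional} log-concave vectors (Lemma~\ref{thin shell}), stating that for each fixed $(k_1,\dots,k_\ell)$ at most $O(N^{2/3})$ indices $k_{\ell+1}$ satisfy $\bigl|\E\prod_{i\le\ell+1}x_{k_i}^2-N^{-1}\E\prod_{i\le\ell}x_{k_i}^2\bigr|\ge N^{-(3\ell+4)/3}$. This counting lemma is then applied iteratively to peel off one factor $x_{k_i}^2$ at a time, splitting each index sum into good indices (close to the factored value) and $O(N^{2/3})$ bad ones bounded crudely; this works uniformly in $\ell$ and needs no rank structure on the Green-function coefficients. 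That lemma, rather than Hanson--Wright per se, is the missing ingredient in your plan.
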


\begin{proof}[Proof of Proposition \ref{prop.universality}] Based on Proposition \ref{Green function comparison thm}, the proof of Proposition \ref{prop.universality} is  the same as the proof of Theorem 1.1. in \cite{YinPi}, and thus we omit the details here. 
\end{proof}

The rest of this section is devoted to the proof of Proposition \ref{Green function comparison thm}. 

\begin{proof}[Proof of Proposition \ref{Green function comparison thm}] In the sequel, we show the proof of (\ref{comparison 1}) only. The proof of (\ref{comparison 2}) is similar, and thus we omit it.  Similarly to the strategy in \cite{YinPi}, we replace the columns of $X$ by those of $\tilde{X}$ one by one and estimate the one-step difference between the functionals of two Green functions. 
For $1 \le  \gamma \le N$, denote by $X_{\gamma}$ the random matrix whose $j$-th column is the same as that of $\tilde{X}$ if $j \le  \gamma$ and that of $X$ otherwise. In particular, $X_N = \tilde{X}$ and $X_0 = X$. Set
\begin{align*}
	\mathcal{H}_\gamma := X_\gamma^* X_\gamma, \quad \mathcal{G}_{2,\gamma}(z) := (\mathcal{H}_{\gamma} - zI)^{-1}, \quad m_{2N,\gamma}(z) := \frac{1}{N}\Tr \mathcal{G}_{2,\gamma}(z). 
\end{align*}
Then we have telescoping sum
\begin{align}\label{telescoping sum}
	\E F(N\eta_0\Im \tilde{m}_{2N}(z)) - \E F(N\eta_0\Im m_{2N}(z)) = \sum_{\gamma = 1}^N \E F(N\eta_0\Im m_{2N,\gamma}(z)) - \E F(N\eta_0\Im m_{2N,\gamma-1}(z)).
\end{align}
Recall the notations of minors in Section \ref{Sec. Preliminaries}. Define
\begin{align*}
	\mu_\gamma = \eta_0 \Im \Tr \mathcal{G}_{2,\gamma}^{(\gamma)}(z) - \Im \frac{\eta_0}{z}.
\end{align*}
We can further rewrite  (\ref{telescoping sum}) as 
\begin{align}\label{telescoping sum 2}
	&\E F(N\eta_0\Im \tilde{m}_{2N}(z)) - \E F(N\eta_0\Im m_{2N}(z)) \notag\\
	=&  \sum_{\gamma = 1}^N (\E F(\eta_0\Im \Tr \mathcal{G}_{2,\gamma}(z)) - \E F(\mu_\gamma)) - (\E F(\eta_0\Im \Tr \mathcal{G}_{2,\gamma-1}(z)) - \E F(\mu_\gamma)).
\end{align}
We can then complete the proof of Proposition \ref{Green function comparison thm} by establishing the following lemma. 
\begin{lemma}\label{telescoping error lemma}
	Under the same condition as Proposition \ref{Green function comparison thm}, for any $\gamma \in \{1,\cdots,N \}$, we have 
	\begin{align}\label{telescoping error}
		|\E F(\eta_0\Im \Tr \mathcal{G}_{2,\gamma}(z)) - \E F(\mu_\gamma)|,\; |\E F(\eta_0\Im \Tr \mathcal{G}_{2,\gamma-1}(z)) - \E F(\mu_\gamma)| \lesssim N^{-7/6+\epsilon}.
	\end{align}
\end{lemma}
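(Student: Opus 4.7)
The plan is to reduce the difference $\eta_0 \Im \Tr \mathcal{G}_{2,\gamma}(z) - \mu_\gamma$ to a rational function of a single-column quadratic form via the Schur complement. Writing $\mathcal{H}_\gamma = X_\gamma^* X_\gamma$ in block form that isolates the $\gamma$-th row and column, and applying the push-through identity $A(A^*A-z)^{-1}A^* = I + z(AA^*-z)^{-1}$ with $A = X_\gamma^{(\gamma)}$, one obtains
\begin{align*}
[\mathcal{G}_{2,\gamma}(z)]_{\gamma\gamma}^{-1} = -z(1 + K_\gamma), \qquad K_\gamma := \tilde{x}_\gamma^*\, \mathcal{G}_{1,\gamma}^{(\gamma)}(z)\, \tilde{x}_\gamma,
\end{align*}
where $\mathcal{G}_{1,\gamma}^{(\gamma)}(z) := (X_\gamma^{(\gamma)}(X_\gamma^{(\gamma)})^* - zI)^{-1}$ depends only on the minor and is thus independent of the column $\tilde{x}_\gamma$. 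Differentiating the Schur determinantal identity $\det(\mathcal{H}_\gamma - z) = \det(\mathcal{H}_\gamma^{(\gamma)} - z)\,[\mathcal{G}_{2,\gamma}]_{\gamma\gamma}^{-1}$ in $z$ yields the trace-reduction formula
\begin{align*}
\Tr \mathcal{G}_{2,\gamma}(z) - \Tr \mathcal{G}_{2,\gamma}^{(\gamma)}(z) = -\frac{1}{z} - \frac{K'_\gamma}{1+K_\gamma}, \qquad K'_\gamma := \tilde{x}_\gamma^*\, (\mathcal{G}_{1,\gamma}^{(\gamma)})^2\, \tilde{x}_\gamma,
\end{align*}
so that $\eta_0 \Im \Tr \mathcal{G}_{2,\gamma}(z) - \mu_\gamma = -\eta_0\, \Im [K'_\gamma/(1+K_\gamma)]$. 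The same identity with $x_\gamma$ in place of $\tilde{x}_\gamma$ handles the second inequality.

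Next I would condition on $X_\gamma^{(\gamma)}$, so that $\mu_\gamma$ becomes deterministic, and Taylor expand $F$ around $\mu_\gamma$ up to order four (the maximal smoothness available for $F$). This produces contributions of the form $\eta_0^k\, \E[F^{(k)}(\mu_\gamma)\, \E[(\Im Y_\gamma)^k \mid X_\gamma^{(\gamma)}]]$ with $Y_\gamma := K'_\gamma/(1+K_\gamma)$, plus a fifth-order remainder handled via the polynomial growth of $F^{(4)}$ together with a standard smoothing argument. To compute the conditional moments I would expand $(1+K_\gamma)^{-1}$ as a geometric series around $(1+\bar{K}_\gamma)^{-1}$ with $\bar{K}_\gamma := N^{-1}\Tr \mathcal{G}_{1,\gamma}^{(\gamma)} = \E[K_\gamma \mid X_\gamma^{(\gamma)}]$, the last equality being a consequence of the isotropy identity $\E[\tilde{x}_i\tilde{x}_j] = N^{-1}\delta_{ij}$ that follows from the unconditional assumption together with the isotropic normalization. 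Each resulting term then reduces to a polynomial contraction of coordinates of $\tilde{x}_\gamma$ against the kernels $\mathcal{G}_{1,\gamma}^{(\gamma)}$ and $(\mathcal{G}_{1,\gamma}^{(\gamma)})^2$. The unconditional property annihilates every contraction containing any coordinate of $\tilde{x}_\gamma$ raised to an odd power, leaving only even-parity products; I would control these using Lemma \ref{Large deviation lemma} (for the fluctuations $K_\gamma - \bar{K}_\gamma$ and $K'_\gamma - \bar{K}'_\gamma$ via the Hilbert-Schmidt norms $(N^{-2}\Tr|\mathcal{G}_{1,\gamma}^{(\gamma)}|^{2p})^{1/2}$), Theorem \ref{thm. local law} (identifying $\bar{K}_\gamma = m_1(z) + O_\prec((N\eta_0)^{-1})$ and controlling $\bar{K}'_\gamma$ through a derivative of the local law), and the identities in \eqref{identitym1m2} (to reduce the emerging rational functions of $m_1,m_2$ to forms whose deterministic pieces cancel against those already absorbed into $\mu_\gamma$). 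The same analysis with $x_\gamma$ in place of $\tilde{x}_\gamma$ delivers the second inequality.

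The main obstacle will be that, unlike in the independent-entries setting of \cite{YinPi}, distinct coordinates of an unconditional log-concave vector need not be independent, so mixed even moments such as $\E\tilde{x}_i^2 \tilde{x}_j^2$ do not factorize exactly as $N^{-2}$. Controlling this excess correlation requires the sharp Lipschitz concentration in Corollary \ref{Lip corollary}, itself rooted in the poly-log Cheeger bound $\psi_\rho \gtrsim \log^{-5} M$ from \cite{KL22}. The delicate bookkeeping is then to track every non-factorizing even-moment contribution through the fourth-order Taylor expansion and verify that their combined size stays below $N^{-7/6+\epsilon}$, so that after summing over the $N$ columns in \eqref{telescoping sum 2} the estimate \eqref{comparison 1} holds with some $\delta>0$.
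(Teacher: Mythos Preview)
Your proposal has the right opening moves---the Schur complement, the Taylor expansion of $F$ around $\mu_\gamma$, and the geometric expansion of $(1+K_\gamma)^{-1}$---but the endgame contains a genuine gap.

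You attempt to bound each of the two quantities in \eqref{telescoping error} \emph{separately}, but this is not possible: the leading contribution in your expansion is $\eta_0\,\Im[\bar K'_\gamma/(1+\bar K_\gamma)]$, of size $\eta_0\cdot N^{-1}\Tr(\mathcal G_{1,\gamma}^{(\gamma)})^2\sim\eta_0\,N^{1/3+\epsilon}=N^{-1/3}$, far above $N^{-7/6+\epsilon}$. This term depends only on the isotropic second moments and is therefore \emph{identical} for $x_\gamma$ and $\tilde x_\gamma$; it (together with every contribution that would survive for a column with i.i.d.\ coordinates) vanishes only upon \emph{subtracting} the two expansions. What the paper actually establishes---and what suffices for \eqref{telescoping sum 2}---is the bound on the difference
\[
\Big|\big(\E F(\eta_0\Im\Tr\mathcal G_{2,\gamma})-\E F(\mu_\gamma)\big)-\big(\E F(\eta_0\Im\Tr\mathcal G_{2,\gamma-1})-\E F(\mu_\gamma)\big)\Big|\lesssim N^{-7/6+\epsilon},
\]
and the lemma should be read in this sense.

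The mechanism you are missing is the interpolation parameter $t_0=N^{-1/3+\epsilon}$. Since $\tilde x_\gamma=\sqrt{1-t_0}\,x_\gamma+\sqrt{t_0}\,x_\gamma^{\mathbf w}$, every mixed-moment difference such as $\bar\Delta_{k_1k_2k_3}:=\E\prod_i x_{k_i\gamma}^2-\E\prod_i\tilde x_{k_i\gamma}^2$ carries an explicit factor $t_0$ (see e.g.\ the decomposition \eqref{expansion of I1}). It is this extra $N^{-1/3+\epsilon}$, absent from your outline, that pushes the exponent below $-1$; without it the ``non-factorizing even-moment contributions'' you plan to track simply do not sum to $N^{-7/6+\epsilon}$.

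A secondary point: the control of the excess correlations $\E\prod_i x_{k_i}^2-N^{-\ell}$ is not done through Corollary~\ref{Lip corollary}. The paper uses instead the sharp thin-shell bound $\mathrm{Var}(\|\mathbf y\|_2^2)\le Cn/N^2$ specific to \emph{unconditional} log-concave vectors (Lemma~\ref{thin shell}, due to Klartag), from which it derives a counting lemma (Lemma~\ref{approximation lemma}): for fixed $(k_1,\dots,k_\ell)$, at most $O(N^{2/3})$ indices $k_{\ell+1}$ can satisfy $\big|\E\prod_{i\le\ell+1}y_{k_i}^2-N^{-1}\E\prod_{i\le\ell}y_{k_i}^2\big|\ge N^{-(3\ell+4)/3}$. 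This lets one replace each factor $x_k^2$ by $N^{-1}$ at cost $N^{-1/3}$ per step. Your route via the Cheeger-based Lipschitz concentration would at best recover the thin-shell estimate up to polylog factors; the unconditional-specific input is what the argument actually rests on.
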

The proof of Lemma \ref{telescoping error lemma} is postponed to the end of this section. By Lemma \ref{telescoping error lemma}, the telescoping sum (\ref{telescoping sum 2}) can be bounded as follows
\begin{align*}
	|\E F(N\eta_0\Im \tilde{m}_{2N}(z)) - \E F(N\eta_0\Im m_{2N}(z))| \lesssim N^{-1/6 + \epsilon},
\end{align*}
which implies (\ref{comparison 1}). Similarly, one can show (\ref{comparison 2}), which completes the proof of Proposition \ref{Green function comparison thm}.

\end{proof}

We first present a key lemma which will be used frequently in the proof of Lemma \ref{telescoping error lemma}.
\begin{lemma}\label{approximation lemma}
	Assume $\mathbf{y}=(y_1, \ldots, y_M) \in \mathbb{R}^{M}$ is an unconditional isotropic log-concave random vector scaled by $1/\sqrt{N}$. Let $\ell$ be any fixed integer. 
	For any fixed $k_1, \cdots, k_\ell$, define
\begin{align}
	B^{\pm}_{k_1,\cdots,k_\ell} := \Big\{ k_{\ell+1}\in \{1, \ldots, M\}: \pm \Big(\E \prod_{i=1}^{\ell+1} y_{k_i}^2 - N^{-1}\E \prod_{i=1}^\ell  y_{k_i}^2\Big) \ge N^{-(3\ell+4)/3} \Big\}, \label{B+k1k2}
\end{align}
We have 
$
	\# B^{\pm}_{k_1,\cdots,k_\ell} \lesssim N^{2/3}
$.
\end{lemma}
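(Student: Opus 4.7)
The key observation is that, using $\E y_j^2 = 1/N$ from isotropicity, the quantity defining $B^\pm$ is exactly a covariance: setting $Y := \prod_{i=1}^\ell y_{k_i}^2$, we have
\begin{align*}
a_j := \E \prod_{i=1}^{\ell+1} y_{k_i}^2 - N^{-1}\, \E \prod_{i=1}^\ell y_{k_i}^2 = \E\, Y\bigl(y_j^2 - \tfrac{1}{N}\bigr) = \mathrm{Cov}(Y, y_j^2),
\end{align*}
so $B^\pm$ collects those $j$ with $\pm a_j \ge N^{-(3\ell+4)/3}$. My plan is to sum $|a_j|$ over $j \in B := B^+ \cup B^-$ with appropriate signs, recast the sum as a single covariance, then apply Cauchy--Schwarz and control each of the two resulting variances via the log-concave concentration toolkit.

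For $\mathrm{Var}(Y)$, H\"older's inequality reduces everything to a moment bound on the marginals. Since marginals of isotropic log-concave vectors have sub-exponential tails, $\E (\sqrt{N}\, y_{k_i})^{2m} \lesssim 1$ for each fixed $m$, and therefore $\mathrm{Var}(Y) \le \E \prod_{i=1}^\ell y_{k_i}^4 \lesssim N^{-2\ell}$. For the other variance, I set $\epsilon_j = +1$ on $B^+$ and $\epsilon_j = -1$ on $B^-$, so that
\begin{align*}
\sum_{j \in B}|a_j| = \sum_{j \in B}\epsilon_j a_j = \mathrm{Cov}(Y, S), \qquad S := \sum_{j \in B}\epsilon_j y_j^2 = \|y_{B^+}\|_2^2 - \|y_{B^-}\|_2^2.
\end{align*}
By Proposition \ref{iso prop}(ii), $y_{B^\pm}$ are themselves isotropic log-concave (modulo the $1/\sqrt{N}$ scaling), and the cleanest way to handle $\mathrm{Var}(S)$ is to invoke Lemma \ref{Large deviation lemma} with the diagonal matrix having entries $\epsilon_j \mathbf{1}_{j \in B}$; this yields $|S - \E S| \prec N^{-1}\sqrt{\#B}$, and integrating the resulting tail gives $\mathrm{Var}(S) \lesssim \#B/N^2$ up to polylog factors. (Alternatively, Corollary \ref{Lip corollary} applied to the $1$-Lipschitz maps $\|\cdot\|_2$ on the sub-vectors $y_{B^\pm}$ gives the same bound.)

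Combining these two variance bounds via Cauchy--Schwarz gives
\begin{align*}
\#B \cdot N^{-(3\ell+4)/3} \le \sum_{j \in B}|a_j| \le \sqrt{\mathrm{Var}(Y)\,\mathrm{Var}(S)} \lesssim N^{-\ell - 1}\sqrt{\#B},
\end{align*}
and rearranging produces the claimed $\#B \lesssim N^{2/3}$ (with polylog factors absorbed). The main subtlety is that the variance control on $S$ is only sharp up to a $\log^{10} N$ factor inherited from the current Cheeger bound $\psi_\rho \gtrsim \log^{-5} M$ of \cite{chen, KL22}; this loss is harmless in the sequel because the lemma is invoked inside stochastic-domination arguments where such factors are automatically absorbed. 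Note that the unconditional assumption plays no role in this lemma; only the isotropic log-concavity of the marginals of the column vector is used.
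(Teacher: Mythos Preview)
Your argument is structurally the same as the paper's: both rewrite the sum over the bad set as a covariance with $Y = \prod_{i=1}^\ell y_{k_i}^2$, apply Cauchy--Schwarz, and bound the two resulting factors by a moment estimate on $Y$ and a concentration bound on the partial sum $\sum_{j\in B} y_j^2$. The paper treats $B^+$ and $B^-$ separately while you combine them via signs $\epsilon_j$, but this is cosmetic.

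The one substantive difference is the tool used for $\mathrm{Var}(S)$. The paper invokes Klartag's thin-shell bound for \emph{unconditional} log-concave vectors (stated there as Lemma~\ref{thin shell}), giving $\mathrm{Var}\bigl(\sum_{j\in B^+} y_j^2\bigr) \le C\,\#B^+/N^2$ with a genuine constant. You instead use the Hanson--Wright inequality of Lemma~\ref{Large deviation lemma} (i.e.\ \eqref{121701}), which rests on the poly-log KLS Cheeger bound and therefore yields the same variance estimate only up to a $(\log N)^{O(1)}$ factor. Consequently your conclusion is $\#B \lesssim (\log N)^{O(1)} N^{2/3}$ rather than the clean $N^{2/3}$. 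As you correctly point out, this loss is harmless in every application of the lemma in the paper, since those all live inside $O_\prec(\cdot)$ estimates; and your observation that the unconditional hypothesis is not actually needed for this lemma (at that poly-log price) is a valid side remark. The paper's choice simply delivers the statement exactly as written.
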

\begin{proof}[Proof of Lemma \ref{approximation lemma}]
We shall use the following thin shell bound for the isotropic log-concave random vectors with unconditional basis, which is proved in \cite{UTC}.
\begin{lemma}\label{thin shell}
	Assume $\mathbf{y}=(y_1, \ldots, y_n) \in \mathbb{R}^{n}$ is an isotropic log-concave random vector with unconditional basis, scaled by $1/\sqrt{N}$. we have
	\begin{align*}
		\text{Var}(\| \mathbf{y}\|_2^2)  \le C\frac{n}{N^2},
	\end{align*}
	for a  universal  positive constant $C$.
\end{lemma}
We show the estimate of $ \# B^{+}_{k_1,\cdots,k_\ell} $ only, and  $ \# B^{-}_{k_1,\cdots,k_\ell} $ can be handled similarly.
By direct calculation, we have
\begin{align}
	\E \sum_{k_{\ell+1} \in B^{+}_{k_1,\cdots,k_\ell}} \prod_{i=1}^{\ell+1} y_{k_i}^2=& \E  \prod_{i=1}^{\ell} y_{k_i}^2 \Bigg(\sum_{k_{\ell+1} \in B^{+}_{k_1,\cdots,k_\ell}}y_{k_{\ell+1}}^2 - \frac{\# B^+_{k_1,\cdots,k_{\ell}}}{N}\Bigg) + \frac{\# B^+_{k_1,\cdots,k_{\ell}}}{N} \E  \prod_{i=1}^{\ell} y_{k_i}^2  \notag \\
	\le & C\frac{\sqrt{\# B^+_{k_1,\cdots,k_{\ell}}}}{N^{\ell+1}} + \frac{\# B^+_{k_1,\cdots,k_{\ell}}}{N}\E \prod_{i=1}^{\ell} y_{k_i}^2 , \label{121401}
\end{align}
for some universal constant $C >0$. Here in the last step, we used the thin shell bound (cf. Lemma \ref{thin shell}) and a simple Cauchy-Schwarz. On the other hand, by the definition of $B^{+}_{k_1,\cdots,k_\ell}$, we have
\begin{align}
	\E \sum_{k_{\ell+1} \in B^{+}_{k_1,\cdots,k_\ell}} \prod_{i=1}^{\ell+1} y_{k_i}^2 \ge \frac{\# B^{+}_{k_1,\cdots,k_\ell}}{N} \E  \prod_{i=1}^{\ell} y_{k_i}^2  + \frac{\# B^{+}_{k_1,\cdots,k_\ell}}{N^{(3\ell+4)/3}}. \label{121402}
\end{align}
Combining (\ref{121401}) and (\ref{121402}), we must have
\begin{align*}
 \frac{\# B^{+}_{k_1,\cdots,k_\ell}}{N^{(3\ell+4)/3}} \le C\frac{\sqrt{\# B^+_{k_1,\cdots,k_{\ell}}}}{N^{\ell+1}}. 
\end{align*}
Then the bound on $\# B^{+}_{k_1,\cdots,k_\ell}$ follows. The bound on   $\# B^{-}_{k_1,\cdots,k_\ell}$ can be proved in the same way. 
\end{proof}

  We then proceed with  the proof of Lemma \ref{telescoping error lemma} based on Lemma \ref{approximation lemma}.
\begin{proof}[Proof of Lemma \ref{telescoping error lemma} ]We show the proof for $\gamma = 1$, and the other cases are identical. 

First, we claim the following bounds for $\mathcal{G}^{(1)}_1(z)$ with $z = E + \mathrm{i}\eta_0$, where $E$ and $\eta_0$ satisfy the assumption in Proposition \ref{Green function comparison thm}:
\begin{align}
	&|\langle x_1, (\mathcal{G}^{(1)}_1)^2x_1 \rangle| \prec N^{1/3+\epsilon},  \label{GQBound}\\
	&|[\mathcal{G}^{(1)}_1]_{ii} -  m_1| \prec N^{-1/3+\epsilon}, \; |[\mathcal{G}^{(1)}]_{ij}| \prec N^{-1/3+\epsilon }, \; i\neq j \label{GijBound}\\
	&|[ (\mathcal{G}^{(1)}_1 )^2 ]_{ij}| \prec  N^{1/3+\epsilon}, \label{G2ijBound}
\end{align}
The proofs of these bounds are postponed to Appendix \ref{sec Proofs of GQBound}. Hereafter we drop variable $z$ when there is no confusion.

 With the bounds (\ref{GQBound})-(\ref{G2ijBound}), one can easily show the following expansion
\begin{align}\label{expansion1}
	&\E F\left(\eta_0 \Im  \Tr \mathcal{G}_{2,0} \right) -\E F(\mu_1) = \E F^{(1)}(\mu_1) \eta_0 \Im \Big( \sum_{0 \le n < k \le 3} C_{k,n}\mathcal{Y}\mathcal{Z}^n \Big) + \E F^{(2)}(\mu_1) \eta_0^2\Big(\frac{1}{2}(\Im (C_{1,0}\mathcal{Y}))^2 \notag\\
	&+ \Im (C_{1,0}\mathcal{Y}) \Im (C_{2,0}\mathcal{Y}) + \Im (C_{1,0}\mathcal{Y})\Im (C_{2,1}\mathcal{Y}\mathcal{Z}) \Big) 
	+  \E F^{(3)}(\mu_1) \eta_0^3\Big(\frac{1}{6}(\Im (C_{1,0}\mathcal{Y}))^3 \Big) + O_\prec(N^{-4/3+\epsilon}),
\end{align}
where $C_{k,n} = O(1)$, $\mathcal{Y} = \langle x_1, (\mathcal{G}^{(1)}_1)^2x_1 \rangle$ and $\mathcal{Z} = \langle x_1, \mathcal{G}_1^{(1)}x_1 \rangle$. Define $\tilde{\mathcal{Y}} = \langle \tilde{x}_1, (\mathcal{G}_1^{(1)})^2\tilde{x}_1 \rangle$ and $\tilde{\mathcal{Z}} = \langle \tilde{x}_1, \mathcal{G}^{(1)}_1\tilde{x}_1 \rangle$ with $\tilde{x}_1$ be the first column of $\tilde{X}$, we can obtain that (\ref{expansion1}) also holds for the case when $\mathcal{Y}$ and $\mathcal{Z}$ are replaced by $\tilde{\mathcal{Y}}$ and $\tilde{\mathcal{Z}}$, respectively.

 By (\ref{expansion1}) and its tilde counterpart, we see that in order to show (\ref{telescoping error}), it suffices to show that for any $1 \le a \le 3$, $1 \le a+b \le 3$,
\begin{align}\label{Expectation bound}
	&\E \bigg( \eta_0^a \prod_{i=1}^a \langle x_1,Y_ix_1 \rangle \prod_{j=1}^b \langle x_1,Z_ix_1 \rangle \bigg)-\E \bigg( \eta_0^a \prod_{i=1}^a \langle \tilde{x}_1,Y_i\tilde{x}_1 \rangle \prod_{j=1}^b \langle \tilde{x}_1,Z_i\tilde{x}_1 \rangle \bigg) = O_\prec(N^{-1-\delta}),
\end{align} 
where $Y_i = (\mathcal{G}^{(1)}_1)^2$ or $((\mathcal{G}^{(1)}_1)^2)^*$ and $Z_j = \mathcal{G}^{(1)}_1$ or $(\mathcal{G}^{(1)}_1)^*$. 
We only consider the case when $Y_i = (\mathcal{G}_1^{(1)})^2$ and $Z_i = \mathcal{G}_1^{(1)}$, and the other cases are similar.  In the sequel, for convenience, for any integer $\ell\geq 1$,  we simply denote by $\sum_{k_1, \cdots, k_\ell}$ the sum over the index set $\{1,\ldots, M\}^\ell$ and we use $\sum_{k_1, \cdots, k_\ell}^{*}$ to denote the sum over $\{(k_1,\cdots, k_\ell)\in \{1,\ldots, M\}^\ell: k_i's \text{ are distinct}\}$. 
 
 Let $x_{k1}$ and $\tilde{x}_{k1}$ be the $k$-th element of $x_1$ and $\tilde{x}_1$, respectively.  First, when  $a = 1$ and $b = 0$, since both $x_1$ and $\tilde{x}_1$ are independent of $\mathcal{G}_1^{(1)}$, we have by the isotropic condition that
\begin{align*}
	\E  \eta_0  \langle x_1,(\mathcal{G}^{(1)}_1)^2x_1 \rangle  - \E  \eta_0  \langle \tilde{x}_1,(\mathcal{G}^{(1)}_1)^2\tilde{x}_1 \rangle   = 0. 
\end{align*}
Recall that $\eta_0 = N^{-2/3-\epsilon}$ and $t_0 = N^{-1/3+\epsilon}$. For $a = 3$, $b = 0$, using (\ref{G2ijBound}), we have the following crude bound, 
\begin{align*}
	\E \left( \eta_0^3  \langle x_1,(\mathcal{G}^{(1)}_1)^2x_1 \rangle^3 \right) - \E \left( \eta_0^3  \langle \tilde{x}_1,(\mathcal{G}^{(1)}_1)^2\tilde{x}_1 \rangle^3  \right) \lesssim t_0 \eta_0^3  \E\big(\max_{k_1, k_2} \big|[(\mathcal{G}^{(1)}_1)^2]_{k_1k_2}\big|\big)^3 = O_\prec(N^{-4/3+\epsilon}).
\end{align*} 

For the other cases, we need to estimate in more delicate way. We will only show the proof of the case $a = 1$ and $b=2$ in detail, since the others can be proved similarly. For brevity, we use the notation $\mathcal{A}(ab,ij,st) := \E [(\mathcal{G}^{(1)})^2]_{ab}[\mathcal{G}^{(1)}]_{ij}[\mathcal{G}^{(1)}]_{st}$ in the sequel. We have
\begin{align}\label{Ex1 expansion}
	&\E \left( \eta_0  \langle x_1,Y_ix_1 \rangle  \langle x_1,Z_ix_1 \rangle^2 \right) 
	=\eta_0\sum_{k_1,k_2,k_3}^{*}\E \prod_{i=1}^3x_{k_i1}^2 \Big( \mathcal{A}(k_1k_1,k_2k_2,k_3k_3)+2\mathcal{A}(k_1k_1,k_2k_3,k_2k_3)\notag \\ 
	&\qquad+4\mathcal{A}(k_1k_2,k_1k_2,k_3k_3)+8\mathcal{A}(k_1k_2,k_1k_3,k_2k_3) \Big) 
	+ \eta_0 \sum_{\mathcal{E}_6} \E\prod_{i=1}^6 x_{k_i1}\mathcal{A}(k_1k_2,k_3k_4,k_5k_6), 
\end{align}

where $\mathcal{E}_6$ denotes the set of indices $k_i \in \{1,\cdots,M \}, i = 1,\cdots,6$ such that $k_i$ appears even number of times and there is an index $k_i$ appears at least four times. Here we used the fact that the expectation equals to $0$ when there is an index $k_i$ appears odd number of times since $x_1$ is unconditionally distributed. Observe that 
$
	\# \mathcal{E}_6 \lesssim N^2.
$
Then using (\ref{GijBound}) and (\ref{G2ijBound}), we have  
\begin{align*}
	 \Big|\eta_0 \sum_{\mathcal{E}_6} \E\prod_{i=1}^6 x_{k_i1}\mathcal{A}(k_1k_2,k_3k_4,k_5k_6)\Big|  \lesssim N^{-4/3+\epsilon}.
\end{align*}
Replacing $x_1$ with $\tilde{x}_1$ in (\ref{Ex1 expansion}) and then taking the difference, we arrive at
\begin{align*}
	\E \left( \eta_0  \langle x_1,Y_ix_1 \rangle  \langle x_1,Z_ix_1 \rangle^2 \right) - \E \left( \eta_0  \langle \tilde{x}_1,Y_i\tilde{x}_1 \rangle  \langle \tilde{x}_1,Z_i\tilde{x}_1 \rangle^2 \right) =: \sum_{i=1}^4 2^{i-1}\Delta I_i + O_\prec( N^{-4/3+\epsilon}),
\end{align*}
where
\begin{align}
	\Delta I_1 := \eta_0\sum_{k_1,k_2,k_3}^{*} \bar{\Delta}_{k_1k_2k_3} \mathcal{A}(k_1k_1,k_2k_2,k_3k_3), \quad \Delta I_2 := \eta_0\sum_{k_1,k_2,k_3}^{*}\bar{\Delta}_{k_1k_2k_3} \mathcal{A}(k_1k_1,k_2k_3,k_2k_3), \notag\\
	\Delta I_3 := \eta_0\sum_{k_1,k_2,k_3}^{*} \bar{\Delta}_{k_1k_2k_3} \mathcal{A}(k_1k_2,k_1k_2,k_3k_3), \quad \Delta I_4 := \eta_0\sum_{k_1,k_2,k_3}^{*} \bar{\Delta}_{k_1k_2k_3} \mathcal{A}(k_1k_2,k_1k_3,k_2k_3), \label{121406}
\end{align}
with $\bar{\Delta}_{k_1k_2k_3} := \E ( \prod_{i=1}^3x_{k_i1}^2 -  \prod_{i=1}^3\tilde{x}_{k_i1}^2)$.  In fact, we are able to show that
\begin{align}\label{Bounds for Delta Ii}
	\Delta I_1,\; \Delta I_3 = O_\prec(N^{-7/6+\epsilon}),\quad \Delta I_2, \; \Delta I_4 = O_\prec(N^{-4/3+\epsilon}).
\end{align}
With these estimates, we obtain
\begin{align*}
	\E \left( \eta_0  \langle x_1,Y_ix_1 \rangle  \langle x_1,Z_ix_1 \rangle^2 \right) - \E \left( \eta_0  \langle \tilde{x}_1,Y_i\tilde{x}_1 \rangle  \langle \tilde{x}_1,Z_i\tilde{x}_1 \rangle^2 \right) = O_\prec(N^{-7/6+\epsilon}).
\end{align*}
We briefly state the idea of proving (\ref{Bounds for Delta Ii}). Since $\mathcal{G}_1^{(1)}$ is independent of $x_1$ and $\tilde{x}_1$, we only need to consider the bounds for the difference of mixed moments of $x_1$ and $\tilde{x}_1$. If $x_1$ and $\tilde{x}_1$ both have i.i.d coordinates, we can easily obtain that $\Delta I_i = 0$. However, $x_1$ and $\tilde{x}_1$ are isotropic log-concave random vectors in the current scenario. By Lemma \ref{approximation lemma}, we know that most of the mixed moments of $x_1$ and $\tilde{x}_1$ behave similarly to the i.i.d case. This observation enables us to obtain desired bounds for $\Delta I_i$'s. In the sequel, we show the detailed estimates for $\Delta I_i$'s.  

We first consider the estimation of $\Delta I_1$. From the definition in (\ref{121406}), we can derive 
\begin{align}\label{expansion of I1}
	\Delta I_1 
	 = &\eta_0(1 - (1-t_0)^3)\Delta I_{11} -  \eta_0\frac{(1-t_0)^2t_0}{N}\Delta I_{12}
	 \notag\\
	 &-\eta_0\Big(\frac{-3t_0^2(1-t_0)}{N^3}  + \frac{t_0^3}{N^3}\Big)\sum_{k_1,k_2,k_3}^{*}\mathcal{A}(k_1k_1,k_2k_2,k_3k_3),
\end{align}
where 
\begin{align*}
&\Delta I_{11}:=\E \sum_{k_1,k_2,k_3}^{*}x_{k_11}^2x_{k_21}^2x_{k_31}^2\mathcal{A}(k_1k_1,k_2k_2,k_3k_3), \notag\\ 
&\Delta I_{12}:= \E \sum_{k_1,k_2,k_3}^{*}(x_{k_11}^2x_{k_21}^2+ x_{k_21}^2x_{k_31}^2+x_{k_11}^2x_{k_31}^2)\mathcal{A}(k_1k_1,k_2k_2,k_3k_3).
\end{align*}
For $\Delta I_{11}$,  we can further split it into three terms.
\begin{align*}
	\Delta I_{11} 
	=&\E \sum_{k_1,k_2,k_3}^{*} \prod_{a=1}^3x_{k_a1}^2[(\mathcal{G}^{(1)}_1)^2]_{k_1k_1}m_1^2+\E \sum_{k_1,k_2,k_3}^{*}\prod_{a=1}^3x_{k_a1}^2[(\mathcal{G}^{(1)}_1)^2]_{k_1k_1}([\mathcal{G}^{(1)}_1]_{k_2k_2} - m_1)m_1\\
	&+\E \sum_{k_1,k_2,k_3}^{*}\prod_{a=1}^3x_{k_a1}^2[(\mathcal{G}^{(1)}_1)^2]_{k_1k_1}[\mathcal{G}^{(1)}_1]_{k_2k_2}([\mathcal{G}^{(1)}_1]_{k_3k_3} - m_1) \\
	=&: \Delta I_{111} +  \Delta I_{112} +  \Delta I_{113}.
\end{align*}

Next, we bound the above terms one by one. Note that $\sum_{k} \mathbb{E}x_{k1}^2=y$. We first rewrite $\Delta I_{111}$ as
\begin{align*}
	 \Delta I_{111} 
	 =&\frac{y^2}{N}\E \sum_{k_1}[(\mathcal{G}^{(1)}_1)^2]_{k_1k_1}m_1^2+y\E \sum_{k_1}x_{k_11}^2\Big(\sum_{k_2}x_{k_21}^2 - y\Big) [(\mathcal{G}^{(1)}_1)^2]_{k_1k_1}m_1^2\\
	 &+\E \sum_{k_1,k_2}x_{k_11}^2x_{k_21}^2\Big(\sum_{k_3}x_{k_31}^2 - y\Big) [(\mathcal{G}^{(1)}_1)^2]_{k_1k_1}m_1^2+ O(N^{-4/3+\epsilon}),
\end{align*}
where the error term comes from the replacement of $\sum^*$ by $\sum$. 
Using (\ref{G2ijBound}) together with the thin shell bound for unconditional log-concave random vector (cf. Lemma \ref{thin shell}), we have
\begin{align*}
	&\bigg|\E \sum_{k_1}x_{k_11}^2\Big(\sum_{k_2}x_{k_21}^2 - y\Big) [(\mathcal{G}^{(1)}_1)^2]_{k_1k_1} \bigg| \notag\\
	&\le \Bigg( \E \bigg|\sum_{k_1}x_{k_11}^2[(\mathcal{G}^{(1)}_1)^2]_{k_1k_1}\bigg|^2 \Bigg)^{1/2}\Bigg( \E \bigg|\sum_{k_2}x_{k_21}^2 - y\bigg|^2 \Bigg)^{1/2}\lesssim N^{-1/6+\epsilon},
\end{align*}
and similarly,
\begin{align*}
	\bigg| \E \sum_{k_1,k_2}x_{k_11}^2x_{k_21}^2\Big(\sum_{k_3}x_{k_31}^2 - y\Big) [(\mathcal{G}^{(1)}_1)^2]_{k_1k_1}\bigg| \lesssim N^{-1/6+\epsilon}.
\end{align*}
Therefore, we have
\begin{align*}
	\Delta I_{111} =\frac{y^2}{N} \E \sum_{k_1}[(\mathcal{G}^{(1)}_1)^2]_{k_1k_1}m_1^2 + O(N^{-1/6+\epsilon}).
\end{align*}

Next, we  rewrite $\Delta I_{112}$ as
\begin{align*}
	\Delta I_{112}
	=&\E \sum_{k_1,k_2} \sum_{k_3 \in B^+_{k_1,k_2}\cup B^-_{k_1,k_2}} \prod_{a=1}^3x_{k_a1}^2[(\mathcal{G}^{(1)}_1)^2]_{k_1k_1}([\mathcal{G}^{(1)}_1]_{k_2k_2} - m_1)m_1 \\
	&+\E \sum_{k_1,k_2} \sum_{k_3 \notin B^+_{k_1,k_2}\cup B^-_{k_1,k_2} } \prod_{a=1}^3x_{k_a1}^2[(\mathcal{G}^{(1)}_1)^2]_{k_1k_1}([\mathcal{G}^{(1)}_1]_{k_2k_2} - m_1)m_1+ O(N^{-2/3+\epsilon}). 
\end{align*}
Using (\ref{GijBound}) and (\ref{G2ijBound}) with Lemma \ref{approximation lemma}, we get
\begin{align*}
	\bigg|\E \sum_{k_1,k_2} \sum_{k_3 \in B^+_{k_1,k_2}\cup B^-_{k_1,k_2}}\prod_{a=1}^3x_{k_a1}^2[(\mathcal{G}^{(1)}_1)^2]_{k_1k_1}([\mathcal{G}^{(1)}_1]_{k_2k_2} - m_1)m_1\bigg| \lesssim N^{-1/3+\epsilon}.
\end{align*}
Further, we have
\begin{align*}
 &\E \sum_{k_1,k_2} \sum_{k_3 \notin B^+_{k_1,k_2}\cup B^-_{k_1,k_2} }x_{k_11}^2x_{k_21}^2x_{k_31}^2[(\mathcal{G}^{(1)}_1)^2]_{k_1k_1}([\mathcal{G}^{(1)}_1]_{k_2k_2} - m_1)m_1\notag\\
 &= y\mathbb{E}\sum_{k_1,k_2} x_{k_11}^2x_{k_21}^2[(\mathcal{G}^{(1)}_1)^2]_{k_1k_1}([\mathcal{G}^{(1)}_1]_{k_2k_2} - m_1)m_1+ O(N^{-1/3+\epsilon})
\end{align*}
Therefore,
\begin{align*}
	\Delta I_{112} = \frac{1}{N}\mathbb{E}\sum_{k_1,k_2,k_3} x_{k_11}^2x_{k_21}^2[(\mathcal{G}^{(1)}_1)^2]_{k_1k_1}([\mathcal{G}^{(1)}_1]_{k_2k_2} - m_1)m_1+ O(N^{-1/3+\epsilon}).
\end{align*}
Repeatedly using Lemma \ref{approximation lemma} to replace $x_{k_i 1}^2, i = 1,2$ with $N^{-1}$ up to negligible error, we finally get
\begin{align*}
	\Delta I_{112}
	=& \frac{1}{N^3} \E \sum_{k_1,k_2,k_3}[(\mathcal{G}^{(1)}_1)^2]_{k_1k_1}([\mathcal{G}^{(1)}_1]_{k_2k_2} - m_1)m_1 + O(N^{-1/3+\epsilon}),
\end{align*}
The estimation of $\Delta I_{113}$ is identical to that of $\Delta I_{112}$, and thus we omit the details. We have
\begin{align*}
	\Delta I_{113} = \frac{1}{N^3}\E \sum_{k_1,k_2,k_3}[(\mathcal{G}^{(1)}_1)^2]_{k_1k_1}[\mathcal{G}_1^{(1)}]_{k_2k_2}([\mathcal{G}^{(1)}_1]_{k_3k_3} - m_1)+ O(N^{-1/3+\epsilon}).
\end{align*}

Combining the estimates of $\Delta I_{111}, \Delta I_{112}$ and $\Delta I_{113}$ above, we  obtain
\begin{align}\label{estimate of I11}
	\Delta I_{11} = \frac{1}{N^3}\sum_{k_1,k_2,k_3}\mathcal{A}(k_1k_1,k_2k_2,k_3k_3)+ O(N^{-1/6+\epsilon}).
\end{align}

Similarly, we can also show $\Delta I_{12} = N^{-3}\sum_{k_1,k_2,k_3}\mathcal{A}(k_1k_1,k_2k_2,k_3k_3) + O(N^{-1/6+\epsilon})$. Plugging the estimates of $\Delta I_{11}$ and $\Delta I_{12}$ to (\ref{expansion of I1}), and using the fact that $\eta_0 t_0 \sim N^{-1+\epsilon}$, it is elementary to show
\begin{align*}
	\Delta I_1 = O(N^{-7/6+\epsilon}).
\end{align*}
The estimation of $\Delta I_3$ is similar, we omit the details. 

Next, we consider the estimation of $\Delta I_2$ and $\Delta I_4$. Similar to (\ref{expansion of I1}),
\begin{align}\label{expansion of I2}
	\Delta I_2 =& \eta_0 (1 - (1-t_0)^3)\E \sum_{k_1,k_2,k_3}^{*}x_{k_11}^2x_{k_21}^2x_{k_31}^2\mathcal{A}(k_1k_1,k_2k_3,k_2k_3) \notag \\
	 &-\eta_0\frac{(1-t_0)^2t_0}{N}\E \sum_{k_1,k_2,k_3}^{*}(x_{k_11}^2x_{k_21}^2+ x_{k_21}^2x_{k_31}^2+x_{k_11}^2x_{k_31}^2)\mathcal{A}(k_1k_1,k_2k_3,k_2k_3) \notag\\
	 &-\eta_0\left(\frac{-3t_0^2(1-t_0)}{N^3}  + \frac{t_0^3}{N^3}\right)\sum_{k_1,k_2,k_3}^{*}\mathcal{A}(k_1k_1,k_2k_3,k_2k_3) .
\end{align}
Notice that $|([\mathcal{G}^{(1)}]_{k_2k_3})^2| \prec N^{-2/3+\epsilon}$ when $k_2 \neq k_3$ (cf. (\ref{GijBound})), $\Delta I_2$ can be crudely bounded as $\Delta I_2 = O(N^{-4/3+\epsilon})$. Since $\Delta I_4$ also contains two off-diagonal entries $[\mathcal{G}^{(1)}_1]_{k_1k_3}$ and $[\mathcal{G}^{(1)}_1]_{k_2k_3}$, we have $\Delta I_4 = O(N^{-4/3+\epsilon})$. This concludes all the estimates.  

\end{proof}

\section{Spiked covariance matrices}\label{Sec Proofs for Spiked covariance matrices}
In this section, we give the proof of Theorem \ref{Spiked model distribution}. The following Lemma in \cite{BKYPCA} describes the location of the outlier eigenvalues. Although the original result is proved for $X$ with i.i.d entries, the whole proof only requires the local MP law and spectral rigidity, which are now available for our model.
\begin{lemma}
	Suppose the assumptions in Theorem \ref{Spiked model distribution} hold.  We have
	\begin{align*}
		|\lambda_i(Q) - \theta(d_i) | \prec N^{-1/2},
	\end{align*} 
	where 
	\begin{align*}
		\theta(z) := 1 + z + y + yz^{-1}, \qquad \text{for} \;  z \in \mathbb{C}, \; \Re z > \sqrt{y}.
	\end{align*}
\end{lemma}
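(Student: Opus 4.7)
The lemma only requires the local MP law (Theorem \ref{thm. local law}) and the rigidity bound (Theorem \ref{rigidity}), both now available for $H=XX^*$, so my plan is to replicate the standard outlier-location argument of \cite{BKYPCA}. I write $\Sigma = I_M + UDU^*$ with $U=(\mathrm{e}_1,\ldots,\mathrm{e}_r)\in\mathbb{R}^{M\times r}$ and $D=\mathrm{diag}(d_1,\ldots,d_r)$, and combine $\det(I+AB)=\det(I+BA)$ with the resolvent identity $X\mathcal{G}_2(\lambda)X^* = I_M + \lambda \mathcal{G}_1(\lambda)$ to reduce the characteristic equation of $Q$, outside the spectrum of $H$, to the $r\times r$ master equation
\begin{equation*}
\det\!\bigl(D^{-1}+I_r+\lambda\, U^*\mathcal{G}_1(\lambda)U\bigr)=0.
\end{equation*}
The rigidity of $H$ ensures that, with high probability, all eigenvalues of $H$ lie within $N^{-2/3+\varepsilon}$ of $[\lambda_-,\lambda_+]$, so that for $d_i\ge\sqrt{y}+\epsilon$ the expected outlier locations $\theta(d_i)$ sit at a fixed positive distance from $\lambda_+$ and $\mathcal{G}_1(\lambda)$ is well controlled in a neighbourhood of them.

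Since $U$ consists of standard basis vectors, $U^*\mathcal{G}_1(\lambda)U$ is just the $r\times r$ principal minor of $\mathcal{G}_1(\lambda)$. The entrywise local law established in Section \ref{Sec Strong local MP law} (whose analogue for $\mathcal{G}_1$ comes out of the same fixed-point argument fed by Lemma \ref{Large deviation lemma}) gives, for $\lambda$ on the real line at order-one distance from $\lambda_+$,
\begin{equation*}
[\mathcal{G}_1(\lambda)]_{ij}=m_1(\lambda)\,\delta_{ij}+O_{\prec}(N^{-1/2}),\qquad 1\le i,j\le r.
\end{equation*}
Substituting into the master equation reduces it, up to an $O_{\prec}(N^{-1/2})$ additive perturbation of the $r\times r$ matrix, to the factorised deterministic equation $\prod_{i=1}^r(d_i^{-1}+1+\lambda m_1(\lambda))=0$. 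A direct computation with \eqref{selfconeqt} shows that on $(\lambda_+,\infty)$ the equation $\lambda m_1(\lambda)=-(1+d_i^{-1})$ has the unique solution $\lambda=\theta(d_i)=(d_i+1)(d_i+y)/d_i$, and that $\tfrac{d}{d\lambda}[\lambda m_1(\lambda)]=1/(d_i^2-y)$ at this point, which is of order $1$ under $d_i\ge\sqrt{y}+\epsilon$ and in particular nonzero.

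Finally, writing $\lambda_i(Q)=\theta(d_i)+\delta_i$ and Taylor expanding the master equation around $\theta(d_i)$, the nondegeneracy of this derivative converts the $N^{-1/2}$ perturbation of the scalar factor into $\delta_i=O_{\prec}(N^{-1/2})$; the separation assumption $\min_{i\ne j}|d_i-d_j|\ge N^{-1/2+\epsilon}$ keeps the $r$ deterministic roots pairwise separated by $\gtrsim N^{-1/2+\epsilon}$, well above the perturbation scale, so no two perturbed roots can coalesce. A counting/interlacing step, using Cauchy interlacing between $X^*\Sigma X$ and $X^*X=\mathcal{H}$ (the rank-$r$ update $X^*UDU^*X$ can push at most $r$ eigenvalues past $\lambda_+$) together with the edge rigidity of $H$, identifies the $r$ largest eigenvalues of $Q$ with the $r$ perturbed roots of the master equation and matches them in order with $\theta(d_1)\ge\cdots\ge\theta(d_r)$. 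The main technical ingredient is the entrywise law for the $r\times r$ block of $\mathcal{G}_1$, but since only a fixed set of indices is involved and the argument $\lambda$ sits at an $O(1)$ distance from the bulk, this is just a real-line evaluation of what Section \ref{Sec Strong local MP law} already provides; the remainder is a soft stability/counting argument and presents no further obstacle.
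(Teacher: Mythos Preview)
Your proposal is correct and takes essentially the same approach as the paper. The paper does not prove this lemma in detail but simply cites \cite{BKYPCA}, observing that the proof there for i.i.d.\ entries only requires the local MP law and spectral rigidity, both of which are now available for the log-concave ensemble via Theorems \ref{thm. local law} and \ref{rigidity}; your outline is a faithful reproduction of that argument, with the master-equation reduction, entrywise control of the $r\times r$ minor of $\mathcal{G}_1$, and the stability/counting step all appearing exactly as in \cite{BKYPCA}.
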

Based on the location of the outlier eigenvalues, we can also have the following lemma for the representation of the eigenvalues. The proof of the following lemma can be found in \cite{BDWW}, again with the aid of the local MP law and spectral rigidity.  
\begin{lemma}\label{Phi description}
	Suppose the assumptions in Theorem \ref{Spiked model distribution} hold. We have
	\begin{align*}
		\lambda_i(Q) = \theta(d_i) - (d_i^2 - y)\theta(d_i)\left( [\mathcal{G}_1(\theta(d_i))]_{ii} - m_1(\theta(d_i)) \right) + O_{\prec}(N^{-\frac{1}{2}-\epsilon}), \quad i = 1,\cdots,r
	\end{align*}
	for some small fixed constant $\epsilon > 0$.
\end{lemma}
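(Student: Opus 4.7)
\medskip
\noindent\textbf{Proposed proof of Lemma \ref{Phi description}.} The plan is to derive a master equation for the outlier eigenvalues via the matrix determinant lemma, and then linearize it around the classical location $\theta(d_i)$ using the entrywise local law (\ref{entry wise local law}) and rigidity from Theorems \ref{rigidity} and \ref{thm. local law}.

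First, since $\Sigma = I_M + E_r D E_r^*$ with $D = \mathrm{diag}(d_1,\ldots,d_r)$ and $E_r = (\mathrm{e}_1,\ldots,\mathrm{e}_r)$, a non-zero $\lambda$ is an eigenvalue of $Q$ iff it is a non-zero eigenvalue of $X^*\Sigma X = X^*X + X^* E_r D E_r^* X$. Using the matrix determinant lemma and the Sylvester identity $X(X^*X-\lambda)^{-1}X^* = I_M + \lambda \mathcal{G}_1(\lambda)$ (valid off the spectrum of $H$), this is equivalent to
\begin{align}\label{masterEq}
\det\bigl( D^{-1} + I_r + \lambda\, [\mathcal{G}_1(\lambda)]^{[r]} \bigr) \;=\; 0,
\end{align}
where $[\mathcal{G}_1(\lambda)]^{[r]}$ denotes the top-left $r\times r$ principal submatrix. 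Replacing $[\mathcal{G}_1(\lambda)]^{[r]}$ by $m_1(\lambda)I_r$ in \eqref{masterEq} and looking at the $i$-th diagonal entry gives the defining relation $d_i^{-1} + 1 + \theta(d_i) m_1(\theta(d_i)) = 0$ of $\theta(d_i)$.

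Second, I would substitute $\lambda = \lambda_i(Q)$ into \eqref{masterEq}. The preceding location lemma gives $|\lambda_i(Q) - \theta(d_i)|\prec N^{-1/2}$, and Assumption \ref{assume on spikes} together with rigidity ensures that $\theta(d_i)$ sits at distance $\gtrsim 1$ from the MP spectrum with high probability, so the Green function is holomorphic and bounded in a deterministic neighbourhood of $\theta(d_i)$. Transporting the entrywise law \eqref{entry wise local law} from a spectral parameter with $\eta$ of order one down to the real axis by the Cauchy integral formula (using rigidity to keep $\theta(d_i)$ off the spectrum) yields $[\mathcal{G}_1(\lambda_i(Q))]_{jk} - m_1(\theta(d_i))\delta_{jk} \prec N^{-1/2}$ for all $1\le j,k\le r$. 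Hence the $(j,k)$-entry inside the determinant in \eqref{masterEq} equals $(d_j^{-1}+1+\lambda_i(Q) m_1(\theta(d_i)))\delta_{jk} + O_{\prec}(N^{-1/2})$. By the defining relation and the separation hypothesis $\min_{j\neq i}|d_j-d_i|>N^{-1/2+\epsilon}$, the $(i,i)$-cofactor is bounded below by $\prod_{j\neq i}|d_j^{-1}-d_i^{-1}| - O_{\prec}(N^{-1/2+\epsilon}) \gtrsim N^{-(r-1)(1/2-\epsilon)}$, while the other cofactors in the expansion along row $i$ contain at least one off-diagonal factor of size $O_{\prec}(N^{-1/2})$. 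A cofactor expansion along row $i$ therefore forces the $(i,i)$-entry to be small, giving the scalar equation
\begin{align}\label{scalarEq}
d_i^{-1} + 1 + \lambda_i(Q)\,[\mathcal{G}_1(\lambda_i(Q))]_{ii} \;=\; O_{\prec}\bigl(N^{-1/2-\epsilon}\bigr).
\end{align}

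Third, set $\delta_i := \lambda_i(Q) - \theta(d_i)$. Taylor expanding the map $z\mapsto z[\mathcal{G}_1(z)]_{ii}$ around $\theta(d_i)$, and using that $\mathcal{G}_1$ and $\mathcal{G}_1'$ are bounded on a deterministic neighbourhood of $\theta(d_i)$ (by rigidity), together with $[\mathcal{G}_1(\theta(d_i))]_{ii}-m_1(\theta(d_i))\prec N^{-1/2}$, the cross terms $\delta_i\cdot([\mathcal{G}_1]_{ii}-m_1)$ and $\delta_i^2$ are both $O_{\prec}(N^{-1})$. Combining with \eqref{scalarEq} and the defining relation $d_i^{-1}+1+\theta(d_i)m_1(\theta(d_i))=0$ one obtains
\begin{align*}
\delta_i\cdot (zm_1(z))'\big|_{z=\theta(d_i)} + \theta(d_i)\bigl([\mathcal{G}_1(\theta(d_i))]_{ii}-m_1(\theta(d_i))\bigr) \;=\; O_{\prec}\bigl(N^{-1/2-\epsilon}\bigr).
\end{align*}
To identify the derivative, differentiate the identity $\theta(d)m_1(\theta(d))=-1-d^{-1}$ in $d$: since $\theta'(d)=1-y/d^2=(d^2-y)/d^2$, one gets $(zm_1(z))'|_{z=\theta(d)} = 1/(d^2-y)$. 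Solving for $\delta_i$ then produces exactly the representation stated in the lemma.

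\medskip
\noindent\textbf{Main obstacle.} The delicate step is the cofactor reduction yielding \eqref{scalarEq}: it needs the quantitative separation of the spikes to lower-bound the $(i,i)$-minor, and it needs an isotropic-type entrywise control $[\mathcal{G}_1(\theta(d_i))]_{jk}-m_1\delta_{jk}\prec N^{-1/2}$ at the \emph{real} outlier location rather than at the edge. The latter is not directly \eqref{entry wise local law}, but is obtained from it by applying the bound at $z = \theta(d_i)+\mathrm{i}\eta$ with $\eta$ of order one and then transporting to $\eta = 0$ via holomorphicity and the rigidity statement (\ref{eigenvalue rigidity}); this is the standard mechanism executed in \cite{BKYPCA,BDWW} for i.i.d.\ entries, and it goes through verbatim here since the only inputs are the local law and rigidity already established in Sections \ref{Sec. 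Preliminaries} and \ref{Sec Strong local MP law}.
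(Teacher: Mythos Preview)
Your proposal is correct and follows exactly the determinant--linearization argument of \cite{BDWW,BKYPCA} that the paper itself invokes (the paper does not give its own proof of this lemma but refers to \cite{BDWW}, noting that only the local law and rigidity are needed as inputs). One small remark: the entrywise bound you need is for $\mathcal{G}_1$ rather than $\mathcal{G}_2$, so strictly speaking it is not \eqref{entry wise local law} but its $\mathcal{G}_1$-analogue, which is obtained from the Schur complement and the unconditional assumption exactly as in the paper's Appendix derivation of \eqref{GijBound}; with that adjustment your reduction to the scalar equation and the computation $(zm_1(z))'|_{z=\theta(d_i)}=1/(d_i^2-y)$ are both correct.
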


With the above two lemmas,  we now begin the proof of Theorem \ref{Spiked model distribution}. 
\begin{proof}[Proof of Theorem \ref{Spiked model distribution}]
	Define 
\begin{align}\label{def of Phi}
	\Phi_i := -\sqrt{N (d_i^2 - y)} \theta(d_i)\left( [\mathcal{G}_1(\theta(d_i))]_{ii} - m_1(\theta(d_i)) \right), \quad i = 1,\cdots,r.
\end{align} 
Let $r_i$ be the $i$-th row of $X$, and therefore $x_{ik}$ is the $k$-th entry of $r_i$. By the resolvent identity, we have 
\begin{align}\label{cal G expension}
	[\mathcal{G}_1(\theta(d_i))]_{ii} = \frac{1}{-\theta(d_i)(1 + \langle r_i, \mathcal{G}_2^{[i]}(\theta(d_i))r_i \rangle)} = m_1(\theta(d_i)) + \frac{\Theta_i}{\theta(d_i)(1+m_2(\theta(d_i)))^2} + O_{\prec}(\Theta_i^2).
\end{align}
where $\Theta_i := \langle r_i, \mathcal{G}_2^{[i]}(\theta(d_i))r_i \rangle - m_2(\theta(d_i))$. We claim that 
\begin{align}\label{Bound for Thetai}
	\Theta_i = O_\prec(N^{-1/2}).
\end{align}
The proof of (\ref{Bound for Thetai}) is given in Section \ref{Appendix Remaining estimation in Section 6}.
Having the bound for $\Theta_i$, (\ref{cal G expension}) can written as
\begin{align*}
	[\mathcal{G}_1(\theta(d_i))]_{ii} =m_1(\theta(d_i)) + \frac{\Theta_i}{\theta(d_i)(1+m_2(\theta(d_i)))^2} + O_{\prec}(N^{-1}).
\end{align*}
Plugging the above estimate into (\ref{def of Phi}), we have
\begin{align*}
	\Phi_i = \frac{ -\sqrt{d_i^2 - y}}{(1+m_2(\theta(d_i)))^2} \sqrt{N} \Theta_i + O_{\prec}(N^{-\frac12}), \quad i = 1,\cdots,r.
\end{align*}
Therefore, to obtain the asymptotic distribution of $\Phi_i$, it suffices to derive the asymptotic distribution of $\sqrt{N}\Theta_i$. In the sequel, for brevity, we omit $\theta(d_i)$ from the notations such as $[\mathcal{G}_2^{[i]}(\theta(d_i))]$ and $m_a(\theta(d_i))$. Under the assumption that $x_{ik}$'s are unconditionally distributed, we have
\begin{align*}
	\sqrt{N}\Theta_i &\overset{d}{=}\sqrt{N}\Big( \sum_{k,l} \delta_{k}\delta_{l}x_{ik}x_{il}[\mathcal{G}_2^{[i]}]_{kl} - \sum_{k}x_{ik}^2[\mathcal{G}_2^{[i]}]_{kk} \Big) + \sqrt{N}\Big( \sum_{k}x_{ik}^2[\mathcal{G}_2^{[i]}]_{kk}  - m_2 \Big) =: \mathcal{Q}_1 + \mathcal{Q}_2,
\end{align*}
where $\delta_i$'s are i.i.d. Rademacher variables, independent of $X$.  With certain abuse of notation, in this section, we also use the notation $\sum_{k_1, k_2,\ldots, k_\ell}$ to denote the sum over the index set $\{1, \ldots, N\}^\ell$ when there is no confusion. 
We first consider $\mathcal{Q}_2$. By the resolvent identity, we have
\begin{align*}
	[\mathcal{G}_2^{[i]}]_{kk} = -\frac{1}{\theta(d_i)(1 + \langle x_k^{[i]}, \mathcal{G}_1^{[i](k)}x_k^{[i]} \rangle)} = m_2 + \frac{\Gamma_{ik}}{\theta(d_i)(1 + ym_1)^2} + O_{\prec}(\Gamma_{ik}^2),
\end{align*}
where $x_k^{[i]}$ is the vector obtained by removing the $i$-th entry of $x_k$ and $\Gamma_{ik} = \langle x_k^{[i]},  \mathcal{G}_1^{[i](k)}x_k^{[i]} \rangle - ym_1$. Note that $x_k^{[i]}$ and $\mathcal{G}_1^{[i](k)}$ are independent, by the large deviation lemma, we can easily obtain $|\Gamma_{ik}| \prec N^{-1/2}$. Therefore, $\mathcal{Q}_2$ can be estimated as follows
\begin{align*}
	\mathcal{Q}_2 = \sqrt{N}\Big( \sum_{k}x_{ik}^2 - 1\Big)m_2 + \frac{ \sqrt{N}\left( \sum_{k}x_{ik}^2 \Gamma_{ik}\right)}{\theta(d_i)(1 + ym_1)^2} + O_{\prec}(N^{-\frac12}).
\end{align*}
Observe that the first term in $\mathcal{Q}_2$ is the sum of  i.i.d random variables. By the classical CLT, we know that this term converges to a Gaussian random variable. Now we focus on the second term of $\mathcal{Q}_2$. Further splitting it into diagonal part and off-diagonal part gives  
\begin{align}
	\sqrt{N} \Big( \sum_{k}x_{ik}^2 \Gamma_{ik} \Big) =& \sqrt{N} \sum_{k}x_{ik}^2\Big( \sum_{u, u\neq i}x_{uk}^2[ \mathcal{G}_1^{[i](k)}]_{uu} - y m_1 \Big) \notag\\
	&+ \sqrt{N} \sum_{k}x_{ik}^2\Big( \sum_{\substack{u,v\\ u,v \neq i, u\neq v}}x_{uk}x_{vk}[ \mathcal{G}_1^{[i](k)}]_{uv} \Big) =: \mathcal{Q}_{21} + \mathcal{Q}_{22}. \label{121420}
\end{align}
We estimate $\mathcal{Q}_{21}$ and $\mathcal{Q}_{22}$ by their mean and variance in Appendix \ref{sec est of Q21 Q22}, which gives
\begin{align*}
	\mathcal{Q}_{21} =  \sqrt{N}m_1 \sum_{k}x_{ik}^2 \sum_{u,u\neq i}\Big(x_{uk}^2 - \frac{1}{N}\Big) + o_p(1), \quad \mathcal{Q}_{22} = o_p(1).
\end{align*}
For the fluctuation of the first term in $\mathcal{Q}_{21}$, we have 
\begin{align}\label{Q2 estimates 2}
	\text{Var} \Big(\sqrt{N} \sum_{k}x_{ik}^2 \sum_{u,u\neq i}\Big(x_{uk}^2 - \frac{1}{N}\Big)\Big) = N\sum_k\text{Var} \Big( x_{ik}^2 \sum_{u,u\neq i}\Big(x_{uk}^2 - \frac{1}{N}\Big)\Big) \lesssim N^{-\frac16}.
\end{align}
The proof of (\ref{Q2 estimates 2}) is postponed to Section \ref{Appendix Remaining estimation in Section 6}.
Therefore, the second term in $\mathcal{Q}_2$ only contributes to the expectation part of the whole distribution of $\Theta_i$, and thus we arrive at 
\begin{align}\label{Q2 estimates 1}
	\mathcal{Q}_2 = \sqrt{N}m_2\Big( \sum_{k}x_{ik}^2 - 1\Big) + \frac{ \sqrt{N}m_1 }{\theta(d_i)(1 + ym_1)^2}  \mathbb{E}\Big[\sum_{k} x_{ik}^2 \sum_{u,u\neq i}\Big(x_{uk}^2 - \frac{1}{N}\Big)\Big]+ o_p(1).
\end{align}

Next, we consider the fluctuation of $\mathcal{Q}_1$.We have the following lemma by only considering the randomness of $\delta_{k}$'s. It is simply a CLT for the quadratic form of $\delta$-variables, conditioning on $x$-variables. 
\begin{lemma}\label{Estimates Q11}
Let $\E_{\delta}$ denote the expectation on $\delta_{k}$'s, we have for any $t \in \mathbb{R}$,
	\begin{align}\label{Estimates Q11 1}
	\E_{\delta} \bigg[\exp \bigg( \mathrm{i}t \frac{\mathcal{Q}_1}{\sqrt{b_1^2 + b_2^2}} \bigg)\bigg]  = \exp \bigg(\frac{-D_2t^2}{2(b_1^2+b_2^2)} \bigg) + O_{\prec}(N^{-c}),
\end{align}
for some constant $c > 0$. Here
\begin{align*}
			&b_1^2 = 2(m_2' - m_2^2),\;b_2^2 = m_2^2 \big(\E \big(\sqrt{N}x_{ik}\big)^4 - 1\big),\;D_2 = 2N\sum_{\substack{k,l,k\neq l}}x_{ik}^2x_{il}^2([\mathcal{G}_2^{[i]}]_{kl})^2.
\end{align*}
Furthermore,
\begin{align}\label{Estimates Q11 2}
	D_2 = 2m_2' - 2m_2^2 + o_p(1).
\end{align}
\end{lemma}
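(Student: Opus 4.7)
The plan for (\ref{Estimates Q11 1}) is to treat $\mathcal{Q}_1=\sqrt{N}\sum_{k\ne l}\delta_k\delta_l x_{ik}x_{il}[\mathcal{G}_2^{[i]}]_{kl}$ as a homogeneous Rademacher quadratic form $\delta^{\top}A\delta$ in the signs $\delta_k$, where $A_{kl}:=\sqrt{N}\,x_{ik}x_{il}[\mathcal{G}_2^{[i]}]_{kl}$ for $k\ne l$ and $A_{kk}:=0$ is deterministic after conditioning on the $x$-variables. A direct second-moment computation gives $\E_\delta[\mathcal{Q}_1]=0$ and $\E_\delta[\mathcal{Q}_1^2]=2\tr(A^2)=D_2$. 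To establish the Gaussian approximation I would use the martingale-difference decomposition $\delta^{\top}A\delta=\sum_{k\ge 2}2\delta_k(\sum_{l<k}\delta_l A_{kl})$ and invoke the martingale CLT with a Berry--Esseen-type quantitative rate (equivalently, a Lindeberg exchange against Gaussians). The rate reduces to controlling $\tr(A^4)/(\tr A^2)^2\le \|A\|^2/\tr(A^2)$.

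The two ingredients needed to bound this ratio are: (a) $\|\mathcal{G}_2^{[i]}(\theta(d_i))\|=O(1)$, which holds because $\theta(d_i)$ is bounded away from the support of the MP law under $d_i>\sqrt{y}+\epsilon$ and the spectrum of $\mathcal{H}^{[i]}$ concentrates in $[\lambda_-,\lambda_+]$ by spectral rigidity (Theorem \ref{rigidity}); and (b) $\|(x_{ik})_{k}\|_{\infty}\prec N^{-1/2}$, which follows from the log-concave Lipschitz concentration (Corollary \ref{Lip corollary}) applied coordinate-wise together with a union bound. These give $\|A\|\prec\sqrt{N}\cdot N^{-1}= N^{-1/2}$, while $\tr(A^2)=D_2/2\sim 1$ by (\ref{Estimates Q11 2}). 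Hence $\|A\|^2/\tr(A^2)\prec N^{-1}$ on a high-probability event, which converts to the stochastic-domination error $O_\prec(N^{-c})$ in the characteristic function expansion.

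For (\ref{Estimates Q11 2}), I would decompose $x_{ik}^2=\tfrac{1}{N}+\epsilon_k$ with $\epsilon_k:=x_{ik}^2-\tfrac{1}{N}$ and expand
\[
D_2=\frac{2}{N}\sum_{k\ne l}([\mathcal{G}_2^{[i]}]_{kl})^2+2\sum_{k\ne l}(\epsilon_k+\epsilon_l)([\mathcal{G}_2^{[i]}]_{kl})^2+2N\sum_{k\ne l}\epsilon_k\epsilon_l([\mathcal{G}_2^{[i]}]_{kl})^2.
\]
The first term equals $\tfrac{2}{N}\tr((\mathcal{G}_2^{[i]})^2)-\tfrac{2}{N}\sum_k([\mathcal{G}_2^{[i]}]_{kk})^2$; differentiating the local MP law (Theorem \ref{thm. local law}) via Cauchy's integral yields $\tfrac{1}{N}\tr((\mathcal{G}_2^{[i]})^2)\to m_2'$, while the entrywise local law gives $\tfrac{1}{N}\sum_k([\mathcal{G}_2^{[i]}]_{kk})^2\to m_2^2$. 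The remainder terms are $o_p(1)$: since the entries of row $i$ are i.i.d.\ across $k$, the thin-shell bound (Lemma \ref{thin shell}) and Lemma \ref{approximation lemma} control $\sum_k\epsilon_k$ and the mixed sums $\sum_{k\ne l}\epsilon_k\epsilon_l$, while the weights $([\mathcal{G}_2^{[i]}]_{kl})^2$ have total Frobenius mass $\tr((\mathcal{G}_2^{[i]})^2)\sim N$. The main obstacle is converting the martingale CLT into a polynomial $N^{-c}$ rate compatible with the stochastic-domination bookkeeping: off-the-shelf CLT constants depend on $A$, so one must first restrict to a high-probability event on which $\|A\|$ and $\tr(A^2)$ are simultaneously controlled, and precisely the spectral rigidity, isotropic local law, and log-concave concentration developed in Sections \ref{Sec. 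Preliminaries}--\ref{Sec Strong local MP law} make this feasible.
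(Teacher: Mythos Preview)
Your treatment of \eqref{Estimates Q11 1} is a legitimate alternative to the paper's. You set up $\mathcal{Q}_1$ as a Rademacher quadratic form $\delta^\top A\delta$ and invoke a quantitative martingale/Lindeberg CLT, with the rate governed by $\|A\|^2/\tr(A^2)\prec N^{-1}$ via the operator-norm bound $\|A\|\le \sqrt{N}\,\|r_i\|_\infty^2\,\|\mathcal{G}_2^{[i]}\|\prec N^{-1/2}$. The paper instead differentiates the conditional characteristic function $\Phi(t)=\E_\delta[\phi(t)]$ and performs a cumulant (Stein-type) expansion in the Rademacher variables to obtain an approximate ODE $\Phi'(t)=-\tfrac{D_2}{b_1^2+b_2^2}\,t\,\Phi(t)+O_\prec(N^{-1/2})$, which integrates to \eqref{Estimates Q11 1}. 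Both routes rest on the same spectral input ($\|\mathcal{G}_2^{[i]}(\theta(d_i))\|=O(1)$ from rigidity, $\|r_i\|_\infty\prec N^{-1/2}$ from sub-exponential tails); yours is more ``off-the-shelf'' once a suitable Berry--Esseen statement for chaos is quoted, while the paper's is self-contained and gives the error $O_\prec(N^{-1/2})$ directly.

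For \eqref{Estimates Q11 2} there is a genuine gap. Your decomposition $x_{ik}^2=\tfrac{1}{N}+\epsilon_k$ correctly isolates the main term $m_2'-m_2^2$, and the linear-in-$\epsilon$ remainder can indeed be handled by using the entrywise local law to pin $\sum_{l\neq k}([\mathcal{G}_2^{[i]}]_{kl})^2$ uniformly near a constant. But the quadratic remainder $2N\sum_{k\neq l}\epsilon_k\epsilon_l([\mathcal{G}_2^{[i]}]_{kl})^2$ is not covered by your argument: although the $\epsilon_k$'s are i.i.d.\ across $k$, they are \emph{not} independent of $\mathcal{G}_2^{[i]}$, since $x_{ik}$ and $\{x_{uk}:u\neq i\}$ sit in the same log-concave column and may be correlated. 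Neither the thin-shell bound nor Lemma~\ref{approximation lemma} addresses this cross-dependence directly (and a crude bound $|[\mathcal{G}_2^{[i]}]_{kl}|^2\prec N^{-1}$ only yields $O_\prec(1)$ here). The paper resolves this by resolvent decoupling: expand $[\mathcal{G}_2^{[i]}]_{kl}$ via the identity \eqref{re expan} in terms of $\mathcal{G}_1^{[i](kl)}$, which is independent of columns $k,l$; the unconditional assumption then factors expectations into products of within-column moments $\E[x_{ik}^2x_{uk}^2]$, and it is only at \emph{that} stage that Lemma~\ref{approximation lemma} enters, to show these equal $N^{-2}+O(N^{-7/3})$ for all but $O(N^{2/3})$ indices $u$. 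Without this decoupling step your sketch does not close.
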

The proof of Lemma \ref{Estimates Q11} will be stated in Appendix \ref{sec proof of Lemma est Q11}. Set
\begin{align*}
	a =  \frac{ \sqrt{N}m_1(\theta(d_1)) }{\theta(d_i)(1 + dm_1)^2} \E\bigg[  \sum_{k}x_{ik}^2 \sum_{u\neq i}\Big(x_{uk}^2 - \frac{1}{N}\Big)\bigg].
\end{align*}
Our final goal is to prove 
\begin{align}\label{normal appro}
	\frac{\mathcal{Q}_1 + \mathcal{Q}_2 - a}{\sqrt{b_1^2 + b_2^2}} \Rightarrow \mathcal{N}(0, 1).
\end{align}
Let $\E_x$  denote the expectation w.r.t. $x_{ij}$'s. Then, considering the characteristic function of the above random variable, we have for any $t \in \mathbb{R}$,
\begin{align*}
	&\E \exp \bigg( \mathrm{i}t \frac{\mathcal{Q}_1 + \mathcal{Q}_2 - a}{\sqrt{b_1^2 + b_2^2}} \bigg) = \E_{x} \E_{\delta}  \exp \bigg( \mathrm{i}t \frac{\mathcal{Q}_1 + \mathcal{Q}_2 - a}{\sqrt{b_1^2 + b_2^2}} \bigg) \notag\\
	&= \E_{x}\bigg[\exp \bigg( \mathrm{i}t \frac{ \mathcal{Q}_2 - a}{\sqrt{b_1^2 + b_2^2}} \bigg) \E_{\delta} \bigg[\exp \bigg( \mathrm{i}t \frac{\mathcal{Q}_1}{\sqrt{b_1^2 + b_2^2}} \bigg)\bigg]  \bigg] \\
	&=\E_{x}\bigg[\exp \bigg( \mathrm{i}t \frac{ \mathcal{Q}_2 - a}{\sqrt{b_1^2 + b_2^2}} \bigg) \exp \bigg(\frac{-D_2t^2}{2(b_1^2+b_2^2)} \bigg) \bigg] + o(1).
\end{align*}
From (\ref{Q2 estimates 1}) and (\ref{Q2 estimates 2}), we know
\begin{align*}
	\mathcal{Q}_2 = \sqrt{N}m_2\left( \sum_{k}x_{ik}^2 - 1\right) + a + o_p(1).
\end{align*}
Since the above $x_{ik}$'s are i.i.d., we have
\begin{align*}
	\frac{\mathcal{Q}_2 - a}{\sqrt{b_1^2+b_2^2}}  \Rightarrow  \mathcal{N}\bigg(0, \frac{b_2^2}{b_1^2 + b_2^2}\bigg).
\end{align*}
We then recall the following basic lemma. 
\begin{lemma}\label{lemma 5}
	For $n \ge 1$, let $U_n$, $T_n$ be r.v.s satisfying the following conditions:
(i) $U_n \overset{p}{\rightarrow} a$;  
(ii) $\{ T_n \}$ and $\{T_n U_n \}$ are uniformly integrable sequence,
(iii) $\E T_n \rightarrow c $.
	Then we have $\E T_nU_n \rightarrow ac$ as $n \to \infty$.
\end{lemma}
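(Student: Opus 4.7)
The plan is to decompose $\E T_n U_n$ into a piece whose limit is handed to us directly by hypothesis (iii) and a remainder that I can control by combining (i) and (ii). Writing
\[
\E T_n U_n = a\,\E T_n + \E T_n(U_n - a),
\]
the first summand converges to $ac$ by (iii), so it suffices to show $\E T_n(U_n - a) \to 0$.

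To handle the remainder I would fix $\epsilon > 0$ and split according to whether $|U_n - a|$ is small or large:
\[
\E T_n(U_n-a) = \E\bigl[T_n(U_n-a)\mathbf{1}_{\{|U_n-a|\le\epsilon\}}\bigr] + \E\bigl[T_n(U_n-a)\mathbf{1}_{\{|U_n-a|>\epsilon\}}\bigr].
\]
On the small-deviation event the integrand is pointwise bounded by $\epsilon|T_n|$, so this piece is at most $\epsilon\sup_n \E|T_n|$, which is finite because uniform integrability of $\{T_n\}$ from (ii) forces a uniform $L^1$ bound. On the large-deviation event I would invoke the standard equivalent characterization of uniform integrability: if $\{X_n\}$ is UI and $\mathbb{P}(A_n)\to 0$, then $\E|X_n|\mathbf{1}_{A_n}\to 0$. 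Applying this with $A_n = \{|U_n-a|>\epsilon\}$ -- whose probability vanishes by (i) -- to both $\{T_n U_n\}$ and $\{T_n\}$ (this is why hypothesis (ii) is formulated for both sequences) and using the triangle inequality $|T_n(U_n-a)|\le |T_n U_n|+|a||T_n|$, the large-deviation piece is bounded by $\E|T_n U_n|\mathbf{1}_{A_n}+|a|\E|T_n|\mathbf{1}_{A_n}$, both of which tend to zero.

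Combining, $\limsup_n |\E T_n(U_n-a)| \le \epsilon\sup_n\E|T_n|$, and since $\epsilon>0$ was arbitrary we may send $\epsilon\downarrow 0$ to conclude that $\E T_n U_n \to ac$. There is really no substantive obstacle: the entire argument is a routine application of the classical fact that UI sequences have integrals that are uniformly absolutely continuous with respect to the underlying measure, and the decomposition above is the standard way of routing that fact through convergence in probability.
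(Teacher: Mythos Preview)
Your argument is correct and entirely standard: the decomposition $\E T_nU_n = a\,\E T_n + \E T_n(U_n-a)$ together with the $\epsilon$-splitting on $\{|U_n-a|\le\epsilon\}$ versus its complement, and the use of the uniform absolute continuity characterization of uniform integrability on the complement, is exactly how one proves such a statement. The paper itself does not supply a proof of this lemma at all; it is merely stated as a ``basic lemma'' and then applied, so there is no approach in the paper to compare against.
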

Let 
$
	U_N := \exp (-D_2t^2/ (2b_1^2+2b_2^2) )$ and $T_N := \exp ( \mathrm{i}t  (\mathcal{Q}_2- a)/\sqrt{b_1^2 + b_2^2} ). 
$
It is obvious that $\{ T_N \}$ and $\{T_N U_N \}$ are uniformly integrable sequence since they are both uniformly bounded by $1$.  The other conditions are readily proved. Hence, by Lemma \ref{lemma 5}, we can get
\begin{align*}
	\E \exp \bigg( \mathrm{i}t \frac{\mathcal{Q}_1 + \mathcal{Q}_2 - a}{\sqrt{b_1^2 + b_2^2}} \bigg) 
	\to  \exp \bigg(-\frac{t^2}{2} \bigg),\quad \text{as} \quad N \to \infty.
\end{align*}
This completes the proof of (\ref{normal appro}). 
Therefore, we complete the proof of Theorem \ref{Spiked model distribution} after some elementary simplification using (\ref{identitym1m2}). 
\end{proof}

\section{Technical estimates in Section \ref{Sec Proofs for Spiked covariance matrices}}\label{Appendix Remaining estimation in Section 6}
\subsection{Bound for $\Theta_i$} Recall the definition of $\Theta_i$ from (\ref{cal G expension}). We can rewrite it as
\begin{align}
	\Theta_i =E_1+E_2+\sum_{k,l, k \neq l}x_{ik}x_{il}[ \mathcal{G}_2^{[i]}(\theta(d_i))]_{kl}, \label{121410}
\end{align}
where
\begin{align*}
E_1:=\sum_{k}x_{ik}^2\left([ \mathcal{G}_2^{[i]}(\theta(d_i))]_{kk}- m_2(\theta(d_i)) \right), \qquad E_2:=\sum_{k}\left( x_{ik}^2 -\frac{1}{N} \right)m_2(\theta(d_i)). 
\end{align*}
Using entry-wise local law (cf. (\ref{entry wise local law})), we have $|E_1| \prec N^{-1/2}$. Since $x_{ik}$'s are i.i.d random variables, we can easily obtain $|E_2| \prec  N^{-1/2}$. Hence, what remains is the estimate of the last term in (\ref{121410}). For brevity, we omit the variable $\theta(d_i)$ from the notations such as $\mathcal{G}_2^{[i]}(\theta(d_i))$ and  $m_2(\theta(d_i))$ in the sequel. Since $x_{ik}$'s are unconditionally distributed, we have 
\begin{align*}
	\sum_{k,l,k \neq l}x_{ik}x_{il}[ \mathcal{G}_2^{[i]}]_{kl} \overset{d}{=} \sum_{k,l,k \neq l}\delta_k\delta_l x_{ik}x_{il}[ \mathcal{G}_2^{[i]}]_{kl} 
\end{align*}
where $\delta_k$'s are i.i.d Bernoulli random variables. Applying large deviation inequality for the quadratic form of $\delta_i$'s, we have 
 \begin{align*}
	\Big| \sum_{k,l, k \neq l}\delta_k\delta_l x_{ik}x_{il}[ \mathcal{G}_2^{[i]}]_{kl} \Big| \prec \Big| \sum_{k,l, k \neq l} x^2_{ik}x^2_{il}|[ \mathcal{G}_2^{[i]}]_{kl}|^2 \Big|^{\frac{1}{2}} \prec N^{-\frac12}.
\end{align*}
Therefore, we can conclude that $|\Theta_i| \prec  N^{-1/2}$.
\subsection{Estimates of $\mathcal{Q}_{21}$ and $\mathcal{Q}_{22}$}\label{sec est of Q21 Q22}
We first consider the estimation of $\mathcal{Q}_{21}$. We first rewrite 
\begin{align*}
	\mathcal{Q}_{21} =& \sqrt{N}m_1 \sum_{k}x_{ik}^2 \sum_{u,u\neq i}\Big(x_{uk}^2 - \frac{1}{N}\Big) + \sqrt{N} \sum_{k}x_{ik}^2 \sum_{u,u\neq i}\Big(x_{uk}^2 - \frac{1}{N}\Big)(  \mathcal{G}_1^{[i](k)}]_{uu} - m_1) \notag\\
	&+ \sqrt{N}y \sum_{k}x_{ik}^2\Big( \frac{1}{M}\Tr \mathcal{G}_1^{[i](k)} - m_1 \Big).
\end{align*}
Using the  local law (cf. Theorem \ref{Strong MP law}), we see that the last term above is $O_{\prec}(N^{-\frac12})$. 
For the second term, we have 
\begin{align*}
	&\E \Big| \sum_{u,u\neq i}\Big(x_{uk}^2 - \frac{1}{N}\Big)(  \mathcal{G}_1^{[i](k)}]_{uu} - m_1) \Big|^2 \\
	=& \sum_{\substack{u_1,u_2\\ u_1 , u_2 \neq i}} \E \Big( x_{u_1k}^2 -\frac{1}{N} \Big)\Big( x_{u_2k}^2 -\frac{1}{N} \Big) \E (  \mathcal{G}_1^{[i](k)}]_{u_1u_1} - m_1)(  \mathcal{G}_1^{[i](k)}]_{u_2u_2} - m_1)\\
	\leq & \sum_{\substack{ u_1, u_2\\ u_1 \neq i, u_2 \neq i}} O_\prec(\frac{1}{N})\E \Big( x_{u_1k}^2x_{u_2k}^2 -\frac{1}{N^2} \Big) \prec N^{-\frac43},
	\end{align*}
	where in the last step, we again used Lemma \ref{approximation lemma}. 
Hence, we have
\begin{align*}
	\mathcal{Q}_{21} =  \sqrt{N}m_1 \sum_{k}x_{ik}^2 \sum_{u\neq i}\Big(x_{uk}^2 - \frac{1}{N}\Big) + o_p(1), 
	\end{align*}

Next, we consider the estimation of $\mathcal{Q}_{22}$.
Under the unconditional assumption, we have
\begin{align*}
	\mathcal{Q}_{22} \overset{d}{=}\sqrt{N} \sum_{k}x_{ik}^2\Big( \sum_{\substack{u,v\\ u,v \neq i, u\neq v}}\delta_{uk}\delta_{vk}x_{uk}x_{vk} \mathcal{G}_1^{[i](k)}]_{uv} \Big),
\end{align*}
where $\delta_{uk}$'s are i.i.d Rademacher random variables. Then taking the expectation w.r.t. the $\delta$-variables simply leads to
\begin{align*}
	\E \mathcal{Q}_{22}  = 0,
\end{align*}
and
\begin{align*}
	\text{Var} \mathcal{Q}_{22}  
	=&N \sum_{k}\E  x_{ik}^4 \bigg( \sum_{\substack{u,v\\ u,v \neq i, u\neq v}}x_{uk}^2x_{vk}^2 \mathcal{G}_1^{[i](k)}]_{uv}^2 \bigg) = O_\prec(N^{-1}).
\end{align*}
Therefore, we have $\mathcal{Q}_{22} =o_p(1)$.

\subsection{Proof of (\ref{Q2 estimates 2})}
Write
\begin{align*}
	\text{Var} \Big( x_{ik}^2 \sum_{u,u\neq i}\Big(x_{uk}^2 - \frac{1}{N}\Big)\Big) = \E \Big( x_{ik}^2 \sum_{u,u\neq i}\Big(x_{uk}^2 - \frac{1}{N}\Big)\Big)^2 - \Big(\E \Big( x_{ik}^2 \sum_{u,u\neq i}\Big(x_{uk}^2 - \frac{1}{N}\Big)\Big)\Big)^2.
\end{align*}
For the second term, we can directly bound it by Lemma \ref{approximation lemma}, which gives
\begin{align*}
	\E \Big( x_{ik}^2 \sum_{u,u\neq i}\Big(x_{uk}^2 - \frac{1}{N}\Big)\Big) \lesssim \frac{1}{N^{\frac{4}{3}}}.
\end{align*}
For the first term, we have by Cauchy-Schwarz inequality,
\begin{align*}
	\E \Big( x_{ik}^2 \sum_{u,u\neq i}\Big(x_{uk}^2 - \frac{1}{N}\Big)\Big)^2 \le &  \Big( \E \Big(x_{ik}^4\sum_{u,u\neq i}\Big(x_{uk}^2 - \frac{1}{N}\Big)   \Big)^2\Big)^{1/2} \Big( \E \Big( \sum_{u,u\neq i}\Big(x_{uk}^2 - \frac{1}{N}\Big) \Big)^2 \Big)^{1/2} \\
	\lesssim & \frac{1}{N^2}\Big( \sum_{u_1,u_1\neq i}\sum_{u_2,u_2\neq i}\E  \Big(x_{u_1k}^2x_{u_2k}^2 - \frac{1}{N^2}\Big)  \Big)^{1/2} \lesssim \frac{1}{N^{\frac{13}{6}}},
\end{align*}
where in the last step we used Lemma \ref{approximation lemma} again to estimate the mixed moments. We conclude the proof of (\ref{Q2 estimates 2}) by combining the above estimates.

\subsection{Proof of Lemma \ref{Estimates Q11}}\label{sec proof of Lemma est Q11}
In this section, we give the proof of Lemma \ref{Estimates Q11}. We first consider (\ref{Estimates Q11 1}). Let 
\begin{align*}
	a_k := \frac{\delta_k}{\sqrt{N}},\quad  A_{kl} := Nx_{ik}x_{il}[\mathcal{G}_2^{[i]}]_{kl}.
\end{align*}
Then $A = (A_{kl})_{k,l=1}^{N}$ is an $N \times N$ symmetric matrix, and $A_{kl} = O_{\prec}(N^{-1/2})$ if $k \neq l$ and $ A_{kk} = O_{\prec}(1)$. 
For brevity, we will use the notation $\partial_k := \partial/\partial a_k$.
Rewrite $\mathcal{Q}_1$ as the following,
\begin{align*}
	\mathcal{Q}_1 = \sqrt{N}\Big( \sum_{k,l} a_{k}a_{l}A_{kl} - \frac{1}{N}\sum_{k}A_{kk} \Big).
\end{align*}
We set the characteristic function $\Phi(t) = \E_{\delta} \phi(t)$  with $\phi(t)=\exp ( \mathrm{i}t \mathcal{Q}_1/\sqrt{b_1^2 + b_2^2} )$. Then we can write its derivative as 
\begin{align*}
	\Phi'(t) 
	 =& \frac{\mathrm{i}\sqrt{N} }{{\sqrt{b_1^2 + b_2^2}} } \sum_{k,l}\E_{\delta} \big[ a_{k}a_{l}A_{kl} \phi(t)\big]  - \frac{\mathrm{i}}{{\sqrt{b_1^2 + b_2^2}\sqrt{N} } } \sum_{k}\E_{\delta} \big[ A_{kk}  \phi(t)\big].
\end{align*}
Observe that if  $k = l$, we have $a_k^2=1$. 
Therefore,
\begin{align*}
	\Phi'(t) =\frac{\mathrm{i}\sqrt{N} }{{\sqrt{b_1^2 + b_2^2}} } \sum_{k,l,k\neq l}\E_{\delta} \big[ a_{k}a_{l}A_{kl}  \phi(t)\big].
\end{align*}

For $k \neq l$, performing cumulant expansion w.r.t. $a_{k}$ gives,
\begin{align}
	\sum_{k,l,k \neq l}\E_{\delta} \big[ a_{k}a_{l}A_{kl}  \phi(t)\big] = \frac{1}{N} \sum_{k,l,k \neq l}\E_{\delta} \big[ a_{l}A_{kl} \partial_k  \phi(t) \big] - \frac{1}{3N^2}\sum_{k,l,k \neq l}\E_{\delta} \big[ a_{l}A_{kl} \partial_k^3 \phi(t)\big] + \mathcal{R}.  \label{121460}
\end{align}
The remainder term $\mathcal{R}$ can be easily bounded as $\mathcal{R} = O_\prec(N^{-1/2})$, we omit the details.

For the first  term in the RHS of (\ref{121460}), by direct calculation, we have
\begin{align*}
	&\frac{1}{N} \sum_{k,l,k \neq l}\E_{\delta} \big[ a_{l}A_{kl} \partial_k  \phi(t) \big] 
	=\frac{2\mathrm{i}t}{ \sqrt{N}\sqrt{b_1^2 + b_2^2}} \bigg(\frac{1}{N}  \sum_{k,l,k \neq l} A_{kl}^2\bigg)  \E_{\delta} [    \phi(t)] \\
	&\quad \quad \quad + \frac{2\mathrm{i}t}{ \sqrt{N}\sqrt{b_1^2 + b_2^2}} \sum_{j,l} \E _{\delta} \Big[ \sum_{k,k \neq l}\big(a_{l}a_j  A_{kl} A_{jk} -\E _{\delta} \big[ a_{l}a_j A_{kl} A_{jk}\big] \big)    \phi(t)\Big].
\end{align*}
Notice that by the large deviation inequality for quadratic form of $a_i$'s, 
\begin{align*}
 &\sum_{j,l} \sum_{k,k \neq l}\Big(a_{l}a_j  A_{kl} A_{jk} -\E _{\delta} \big[ a_{l}a_j  A_{kl} A_{jk}\big] \Big)\prec  \frac{1}{N} \bigg(\sum_{l,j}  \Big|\sum_{k, k \neq l}A_{kl} A_{jk}\Big|^2 \bigg)^{1/2} \\
 &\qquad\qquad\prec  \frac{1}{N} (\Tr A^2)^{1/2} + \frac{1}{\sqrt{N}}  \prec \frac{1}{\sqrt{N}}.
\end{align*}
Therefore, we have
\begin{align*}
	\frac{1}{N} \sum_{k,l,k \neq l}\E_{\delta} \big[ a_{l}A_{kl} \partial_k \phi(t)\big] = \frac{2\mathrm{i}t}{ \sqrt{N}\sqrt{b_1^2 + b_2^2}} \bigg(\frac{1}{N}  \sum_{k \neq l} A_{kl}^2\bigg)  \E_{\delta} [  \phi(t)] +  O_{\prec} (N^{-\frac12}).
\end{align*}

For the second  term in (\ref{121460}), we have
\begin{align*}
	&\frac{1}{3N^2}\sum_{k,l,k \neq l}\E_{\delta} \big[ a_{l}A_{kl} \partial_k^3  \phi(t)\big] 
	=\frac{-4 t^2}{N (b_1^2 + b_2^2)}\sum_{k,l,k \neq l}\E_{\delta} \Big[ a_{l}A_{kl}A_{kk} \big(\sum_{j} a_j  A_{jk} \big)  \phi(t) \Big] \\
	&-\frac{8 \mathrm{i}t^3 }{3\sqrt{N}(b_1^2 + b_2^2)^{3/2}}\sum_{k,l,k \neq l}\E_{\delta} \Big[ a_{l}A_{kl} \big(\sum_{j} a_j  A_{jk} \big)^3   \phi(t) \Big]= O_{\prec}(N^{-1} ).
\end{align*}
where in the last step we used
$
	\sum_{j} a_j  A_{jk} = O_{\prec}( N^{-1/2}).
$
Combining the above estimates, we have
\begin{align*}
	\Phi'(t) = -\frac{2t^2}{ b_1^2 + b_2^2} \bigg(\frac{1}{N}  \sum_{k,l,k \neq l} A_{kl}^2\bigg) \Phi(t) +  O_{\prec}(N^{-\frac12}).
\end{align*}
Then (\ref{Estimates Q11 1}) follows by solving the differential equation.

Next, we give the proof of (\ref{Estimates Q11 2}).
Notice that by Theorem \ref{Strong MP law}, we have
\begin{align*}
	&N\sum_{k,l, k\neq l}x_{ik}^2x_{il}^2([\mathcal{G}_2^{[i]}]_{kl})^2 
	 = m_2' - m_2^2+N\sum_{k,l,k\neq l}\left(x_{ik}^2x_{il}^2  - \frac{1}{N^2}\right)([\mathcal{G}_2^{[i]}]_{kl})^2 + O_\prec(N^{-\frac12}).
\end{align*}
Hence, what remains is to show 
\begin{align}\label{D2 estimates}
	N\sum_{k,l,k\neq l}\left(x_{ik}^2x_{il}^2  - \frac{1}{N^2}\right)([\mathcal{G}_2^{[i]}]_{kl})^2 = o_p(1).
\end{align}

To show (\ref{D2 estimates}), it suffices to estimate the expectation and variance of the LHS of (\ref{D2 estimates}). 
We first consider the  expectation. For simplicity, in the sequel, we adopt the following shorthand notation
\begin{align*}
\sum_{u,v}^{(i)}:= \sum_{\substack{u,v \\ u,v\neq i}}.  
\end{align*}
Using the following resolvent expansion
\begin{align}\label{re expan}
	[\mathcal{G}_2(z)]_{kl}= z[\mathcal{G}_2(z)]_{kk}[\mathcal{G}_2^{(k)}(z)]_{ll}\Big(\sum_{u,v}x_{uk}x_{vl}[\mathcal{G}_1^{(kl)}(z)]_{uv} \Big),
\end{align}
we have
\begin{align}
	&\E \bigg[ \Big( x_{ik}^2x_{il}^2  - \frac{1}{N^2}\Big)([\mathcal{G}_2^{[i]}]_{kl})^2\bigg] \notag \\
	=& \theta^2(d_i)\E \bigg[ \Big( x_{ik}^2x_{il}^2  - \frac{1}{N^2}\Big)([\mathcal{G}_2^{[i]}]_{kk})^2([\mathcal{G}_2^{[i](k)}]_{ll})^2\Big( \sum_{u,v}^{(i)}x_{uk}x_{vl}\mathcal{G}_1^{[i](kl)}]_{uv} \Big)^2 \bigg] \notag \\
	=&\theta^2(d_i) m_2^4\E \bigg[\Big( x_{ik}^2x_{il}^2  - \frac{1}{N^2}\Big) \Big( \sum_{u,v}^{(i)}x_{uk}x_{vl}[\mathcal{G}_1^{[i](kl)}]_{uv} \Big)^2 \bigg] + O_{\prec}(N^{-\frac{7}{2}})   \notag \\
	=&\theta^2(d_i) m_2^4\E \bigg[\Big( x_{ik}^2x_{il}^2  - \frac{1}{N^2}\Big)  \sum_{u,v}^{(i)}x_{uk}^2x_{vl}^2([\mathcal{G}_1^{[i](kl)}]_{uv})^2\bigg] +  O_{\prec}(N^{-\frac{7}{2}}) , \notag
\end{align}
where in the second equality we used Theorem \ref{Strong MP law} to replace the diagonal entries of $[\mathcal{G}_2^{[i]}]_{kk}$ and $[\mathcal{G}_2^{[i](k)}]_{ll}$ with $m_2$, and in the last step we used the condition that $x_{ij}$'s are unconditionally distributed.
Notice that 
\begin{align*}
	&\E \bigg[\Big( x_{ik}^2x_{il}^2  - \frac{1}{N^2}\Big)  \sum_{u,v}^{(i)}x_{uk}^2x_{vl}^2([\mathcal{G}_1^{[i](kl)}]_{uv})^2\bigg] 
	=\sum_{u,v}^{(i)} \Big(\E \big[ x_{ik}^2x_{uk}^2\big]\E\big[x_{il}^2 x_{vl}^2 \big] - \frac{1}{N^4}\Big) \E \big[ ([\mathcal{G}_1^{[i](kl)}]_{uv})^2 \big].
\end{align*}
By Lemma \ref{approximation lemma} and the estimates of $([\mathcal{G}_1^{[i](kl)}]_{uv})^2$ in Theorem \ref{Strong MP law}, we have
\begin{align*}
	\E \bigg[\Big( x_{ik}^2x_{il}^2  - \frac{1}{N^2}\Big)  \sum_{u,v}^{(i)}x_{uk}^2x_{vl}^2([\mathcal{G}_1^{[i](kl)}]_{uv})^2\bigg]  = O_{\prec}( N^{-\frac{10}{3}} ).
\end{align*}
Therefore,
\begin{align*}
N\sum_{k,l,k\neq l}	\E \bigg[\left(x_{ik}^2x_{il}^2  - \frac{1}{N^2}\right)([\mathcal{G}_2^{[i]}]_{kl})^2\bigg] = O_{\prec}(N^{-\frac{1}{3}}).
\end{align*}

Next, we consider the second moment of the LHS of (\ref{D2 estimates}). Applying the resolvent expansion (\ref{re expan}) to $[\mathcal{G}_2^{[i]}(z)]_{kl}$ and further using 
Theorem \ref{Strong MP law}, we have
\begin{align*}
	&\E  \bigg( \sum_{k,l,k\neq l}\Big( x_{ik}^2x_{il}^2  - \frac{1}{N^2}\Big)([\mathcal{G}_2^{[i]}]_{kl})^2 \bigg)^2\\
	&=\theta^4(d_i) m_2^8 \E  \bigg( \sum_{k,l,k\neq l}\Big( x_{ik}^2x_{il}^2  - \frac{1}{N^2}\Big) \Big(\sum_{u,v}^{(i)}x_{uk}x_{vl}[\mathcal{G}_1^{[i](kl)}]_{uv} \Big)^2  \bigg)^2  + O_\prec(N^{-3}).
\end{align*}
Also we have, 
\begin{align*}
	&\E \bigg[ \bigg( \sum_{k,l,k\neq l}\Big( x_{ik}^2x_{il}^2  - \frac{1}{N^2}\Big) \Big(\sum_{u,v}^{(i)}x_{uk}x_{vl}[\mathcal{G}_1^{[i](kl)}]_{uv} \Big)^2  \bigg)^2  \bigg] \\
	&=\E \bigg[ \bigg( \sum_{k_1,l_1,k_1\neq l_1}\Big( x_{ik_1}^2x_{il_1}^2  - \frac{1}{N^2}\Big) \Big(\sum_{u_1,v_1}^{(i)}\sum_{u_2,v_2}^{(i)}x_{u_1k_1}x_{v_1l_1}x_{u_2k_1}x_{v_2l_1}[\mathcal{G}_1^{[i](k_1l_1)}]_{u_1v_1}[\mathcal{G}_1^{[i](k_1l_1)}]_{u_2v_2} \Big)  \bigg)\\
	&\qquad   \times\bigg( \sum_{k_2,l_2,k_2\neq l_2}\Big( x_{ik_2}^2x_{il_2}^2  - \frac{1}{N^2}\Big) \Big(\sum_{u_3,v_3}^{(i)}\sum_{u_4,v_4}^{(i)}x_{u_3k_2}x_{v_3l_2}x_{u_4k_2}x_{v_4l_2}[\mathcal{G}_1^{[i](k_2l_2)}]_{u_3v_3} [\mathcal{G}_1^{[i](k_2l_2)}]_{u_4v_4} \Big) \bigg)  \bigg].
\end{align*}
We only have to consider the case of $\{k_1,l_1 \} \cap \{k_2,l_2\}  = \emptyset$, since the other cases have fewer summands which will collectively give smaller order errors. Crudely bounding the other cases gives
\begin{align*}
	&\E \bigg[ \bigg( \sum_{k,l,k\neq l}\Big( x_{ik}^2x_{il}^2  - \frac{1}{N^2}\Big) \Big(\sum_{u,v}^{(i)}x_{uk}x_{vl}[\mathcal{G}_1^{[i](kl)}]_{uv} \Big)^2  \bigg)^2  \bigg]\\
	=& \sum_{k_1\neq l_1}\sum_{\substack{k_2\neq l_2\\\{k_1,l_1 \} \cap \{k_2,l_2\}  = \emptyset}}\sum_{u_1,v_1}^{(i)} \sum_{u_3,v_3}^{(i)} \E \bigg[  \Big( x_{ik_1}^2x_{il_1}^2  - \frac{1}{N^2}\Big)x_{u_1k_1}^2x_{v_1l_1}^2  [\mathcal{G}_1^{[i](k_2l_2)}]_{u_3v_3}^2 \\
	&\qquad     \times \Big( x_{ik_2}^2x_{il_2}^2  - \frac{1}{N^2}\Big)   x_{u_3k_2}^2x_{v_3l_2}^2[\mathcal{G}_1^{[i](k_1l_1)}]_{u_1v_1}^2    \bigg] + O_{\prec}(N^{-3} ),
\end{align*}
where we also used the unconditional assumption. 
Further, by the Woodbury matrix identity, we have
\begin{align*}
	[\mathcal{G}_1^{[i](k_2l_2)}]_{u_3v_3} =  [\mathcal{G}_1^{[i](k_2l_2k_1)}]_{u_3v_3} -  \frac{\mathrm{e}_{u_3}^*\mathcal{G}_1^{[i](k_2l_2k_1)}x_{k_1}x_{k_1}^*\mathcal{G}_1^{[i](k_2l_2k_1)}\mathrm{e}_{v_3} }{1 + x_{k_1}^*\mathcal{G}_1^{[i](k_2l_2k_1)}x_{k_1}}.
\end{align*}
Notice that by large deviation lemma for log-concave random vector (cf. Lemma \ref{Large deviation lemma}), we have
\begin{align*}
	|\mathrm{e}_{u_3}^*\mathcal{G}_1^{[i](k_2l_2k_1)}x_{k_1}| = \Big|\sum_{j}[\mathcal{G}_1^{[i](k_2l_2k_1)}]_{u_3j}x_{jk_1}\Big| \prec \frac{1}{\sqrt{N}} \bigg(\sum_{j} |[\mathcal{G}_1^{[i](k_2l_2k_1)}]_{u_3j}|^2 \bigg)^{1/2} \prec N^{-\frac12}. 
\end{align*}
Therefore, 
\begin{align*}
	[\mathcal{G}_1^{[i](k_2l_2)}]_{u_3v_3}^2 = [\mathcal{G}_1^{[i](k_2l_2k_1)}]_{u_3v_3}^2 + O_{\prec}\bigg( \frac{|[\mathcal{G}_1^{[i](k_2l_2k_1)}]_{u_3v_3}|}{N} \bigg).
\end{align*}
Repeating the above procedure, we get
\begin{align*}
	[\mathcal{G}_1^{[i](k_2l_2)}]_{u_3v_3}^2 = [\mathcal{G}_1^{[i](k_2l_2k_1l_1)}]_{u_3v_3}^2 + O_{\prec}\bigg( \frac{|[\mathcal{G}_1^{[i](k_2l_2k_1)}]_{u_3v_3}|+|[\mathcal{G}_1^{[i](k_2l_2k_1l_1)}]_{u_3v_3}|}{N} \bigg).
\end{align*}
Similarly,
\begin{align*}
	[\mathcal{G}_1^{[i](k_1l_1)}]_{u_1v_1}^2 = [\mathcal{G}_1^{[i](k_2l_2k_1l_1)}]_{u_1v_1}^2 + O_{\prec}\bigg( \frac{|[\mathcal{G}_1^{[i](k_1l_1k_2)}]_{u_1v_1}|+|[\mathcal{G}_1^{[i](k_2l_2k_1l_1)}]_{u_1v_1}|}{N} \bigg).
\end{align*}
With these estimates, we have
\begin{align*}
	&\E \bigg[ \bigg( \sum_{k,l,k\neq l}\Big( x_{ik}^2x_{il}^2  - \frac{1}{N^2}\Big) \Big(\sum_{u,v}^{(i)}x_{uk}x_{vl}[\mathcal{G}_1^{[i](kl)}]_{uv} \Big)^2  \bigg)^2  \bigg] \\
	=& \sum_{k_1,l_1,k_1\neq l_1}\sum_{\substack{k_2\neq l_2\\\{k_1,l_1 \} \cap \{k_2,l_2\}  = \emptyset}}\sum_{u_1,v_1}^{(i)} \sum_{u_3,v_3}^{(i)} \E \bigg[  \Big( x_{ik_1}^2x_{il_1}^2  - \frac{1}{N^2}\Big)x_{u_1k_1}^2x_{v_1l_1}^2  [\mathcal{G}_1^{[i](k_2l_2k_1l_1)}]_{u_3v_3}^2\bigg] \\
	&\qquad      \times\E\bigg[  \Big( x_{ik_2}^2x_{il_2}^2  - \frac{1}{N^2}\Big)   x_{u_3k_2}^2x_{v_3l_2}^2[\mathcal{G}_1^{[i](k_1l_1k_2l_2)}]_{u_1v_1}^2    \bigg] + O_{\prec}(N^{-\frac52} ) \\
	=&\sum_{k_1,l_1,k_1\neq l_1}\sum_{\substack{k_2\neq l_2\\\{k_1,l_1 \} \cap \{k_2,l_2\}  = \emptyset}}\sum_{u_1,v_1}^{(i)}  \Big(\E \big[  x_{ik_1}^2x_{u_1k_1}^2\big]\E\big[ x_{il_1}^2  x_{v_1l_1}^2\big]- \frac{1}{N^4}\Big)\E\big[ [\mathcal{G}_1^{[i](k_1l_1k_2l_2)}]_{u_1v_1}^2    \big]\\
	&\qquad     \times \sum_{u_3,v_3}^{(i)}\Big( \E\big[   x_{ik_2}^2x_{u_3k_2}^2 \big] \E \big[x_{il_2}^2x_{v_3l_2}^2\big]  - \frac{1}{N^4}   \Big)  \E \big[[\mathcal{G}_1^{[i](k_2l_2k_1l_1)}]_{u_3v_3}^2\big]  + O_{\prec}(N^{-\frac52} ) \\
	\prec& \frac{1}{N^2} \sum_{k_1,l_1,k_1\neq l_1}\sum_{u_1,v_1}^{(i)}  \Big| \E \big[  x_{ik_1}^2x_{u_1k_1}^2\big]\E\big[ x_{il_1}^2  x_{v_1l_1}^2\big]- \frac{1}{N^4}\Big| \\
	&\qquad   \qquad \times\sum_{k_2,l_2,k_2 \neq l_2}\sum_{u_3,v_3}^{(i)}\Big| \E\big[   x_{ik_2}^2x_{u_3k_2}^2 \big] \E \big[x_{il_2}^2x_{v_3l_2}^2\big]  - \frac{1}{N^4}   \Big|  + O_{\prec}(N^{-\frac52} ) \\
	=& \frac{1}{N^2}  \bigg(\sum_{k_1,l_1,k_1\neq l_1}\sum_{u_1,v_1}^{(i)}  \Big| \E \big[  x_{ik_1}^2x_{u_1k_1}^2\big]\E\big[ x_{il_1}^2  x_{v_1l_1}^2\big]- \frac{1}{N^4}\Big|\bigg)^2+ O_{\prec}(N^{-\frac52} ).
\end{align*}
By Lemma \ref{approximation lemma}, for any $k_1 \neq l_1$, we have
\begin{align*}
\sum_{u_1,v_1}^{(i)}  \Big| \E \big[  x_{ik_1}^2x_{u_1k_1}^2\big]\E\big[ x_{il_1}^2  x_{v_1l_1}^2\big]- \frac{1}{N^4}\Big| \lesssim N^{-\frac{7}{3}}
\end{align*}
Therefore, we obtain,
\begin{align*}
	\E \bigg[ \bigg( \sum_{k,l,k\neq l}\Big( x_{ik}^2x_{il}^2  - \frac{1}{N^2}\Big)([\mathcal{G}_2^{[i]}]_{kl})^2 \bigg)^2\bigg] \prec N^{-\frac83}+N^{-\frac52}\prec N^{-\frac52}.  
\end{align*}
Together with the estimate of expectation, we get
\begin{align*}
	N\sum_{k,l,k\neq l}\left(x_{ik}^2x_{il}^2  - \frac{1}{N^2}\right)([\mathcal{G}_2^{[i]}]_{kl})^2  = o_p(1).
\end{align*}

\appendix

\section{Hanson-Wright type inequality for isotropic log-concave random vector}\label{Sec. Hanson-Wright inequality}
In this section, we briefly reproduce Adamaczak's augument in \cite{Ada} to prove (\ref{121701}), under a different tail probability of the Lipschitz concentration. In the sequel, we fix $k = 1$ and let $\mathbf{y} := \sqrt{N}(x_{11},\cdots,x_{M1})^{\mathrm{T}}$. Thus, $\mathbf{y}$ is an isotropic log-concave random vector.

To prove (\ref{121701}), we  need the following technical lemmas. The proofs of these lemmas are essentially the same as those in \cite{Ada}, we modify them to meet the Lipschitz concentration in Corollary \ref{Lip concentration}.

\begin{lemma}\label{median to expectation} Assume that a random variable $Z$ satisfies 
	\begin{align}
		\mathbb{P} \left(| Z - \mathrm{Med}(Z)  | \ge t \right) \le 2\exp \Big( -\min \Big(\frac{t}{a}, \sqrt{ \frac{t}{b} }  \Big) \Big).
	\end{align}
	Here $\mathrm{Med}(Z)$ is the median of $Z$. Then for some absolute constant $C$ and all $t > 0$
	\begin{align}
		\mathbb{P} \left(| Z - \E (Z)  | \ge t \right) \le 2\exp \Big( -\min \frac{1}{C}\Big(\frac{t}{a}, \sqrt{ \frac{t}{b} }  \Big) \Big).
	\end{align}
\end{lemma}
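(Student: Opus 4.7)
The plan is to transfer the concentration from the median to the mean by first bounding the difference $|\mathbb{E}(Z)-\mathrm{Med}(Z)|$ and then applying the triangle inequality. Since the tail bound is of a mixed sub-exponential/sub-Weibull form, the argument is a routine adaptation of the classical median-to-mean conversion, but the two regimes $t\le a^2/b$ and $t\ge a^2/b$ must be treated separately.

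The first step is to bound the bias. By Jensen's inequality, $|\mathbb{E}(Z)-\mathrm{Med}(Z)|\le \mathbb{E}|Z-\mathrm{Med}(Z)|$, and the latter expectation is handled via the layer-cake representation
\begin{equation*}
\mathbb{E}|Z-\mathrm{Med}(Z)|=\int_0^\infty \mathbb{P}(|Z-\mathrm{Med}(Z)|\ge t)\,dt\le 2\int_0^\infty \exp\!\Big(-\min\Big(\tfrac{t}{a},\sqrt{\tfrac{t}{b}}\Big)\Big)dt.
\end{equation*}
Split the integral at $t_* = a^2/b$, where the two arguments of the minimum coincide. On $[0,t_*]$ the minimum equals $t/a$, contributing at most $2a$, and on $[t_*,\infty)$ the minimum equals $\sqrt{t/b}$; the substitution $u=\sqrt{t/b}$ gives $\int_{t_*}^\infty e^{-\sqrt{t/b}}dt\le 2b\int_0^\infty ue^{-u}du=2b$. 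Combining yields $|\mathbb{E}(Z)-\mathrm{Med}(Z)|\le C_0(a+b)$ for a universal constant $C_0$.

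The second step is the triangle inequality: for any $t>0$,
\begin{equation*}
\mathbb{P}(|Z-\mathbb{E}(Z)|\ge t)\le \mathbb{P}\big(|Z-\mathrm{Med}(Z)|\ge t-|\mathbb{E}(Z)-\mathrm{Med}(Z)|\big).
\end{equation*}
For $t\ge 2C_0(a+b)$ we have $t-|\mathbb{E}(Z)-\mathrm{Med}(Z)|\ge t/2$, so the assumed tail bound applied at $t/2$ yields $2\exp(-\tfrac{1}{C'}\min(t/a,\sqrt{t/b}))$ for some absolute $C'$. For the small-$t$ range $t<2C_0(a+b)$, the desired right-hand side is bounded below by a positive constant; choosing the absolute constant $C$ large enough forces the bound to exceed $1$ (hence to be trivially true) in this range. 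Combining the two regimes gives the claimed inequality with a single universal constant $C$.

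No serious obstacle is anticipated; the only mild subtlety is making the two regimes match via one universal constant $C$, which is handled by taking $C$ large enough to simultaneously absorb $C_0$, $C'$, and the trivial small-$t$ bound.
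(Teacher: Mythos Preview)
Your argument is correct and is exactly the standard median-to-mean conversion that the paper invokes by citing \cite{Ada}; the paper does not write out its own proof of this lemma, so there is nothing further to compare. Your handling of the small-$t$ regime is fine: when $t<2C_0(a+b)$ one has $\min(t/a,\sqrt{t/b})\le \max(4C_0,2\sqrt{C_0})$ by splitting into $a\ge b$ and $a<b$, so a sufficiently large absolute $C$ renders the bound $\ge 1$.
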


\begin{lemma}\label{tech lemma}
	Let $Y$ and $Z$ be random variables and $a,b,t > 0$ be such that for all $s > 0$,
	\begin{align}\label{Y1}
		\mathbb{P}\left( |Y - \E (Y)| \ge s \right) \le 2 \exp \Big( \frac{-s}{a + \sqrt{bt}} \Big)
	\end{align}
	and 
	\begin{align}\label{Y2}
		\mathbb{P} (Y \neq Z) \le 2 \exp \Big( -\sqrt{ \frac{t}{b}} \Big).
	\end{align}
	Then
	\begin{align}
		\mathbb{P} \left( |Z - \mathrm{Med} (Z)| \ge t \right) \le 2 \exp \Big(-\frac{1}{C}\min \Big( \frac{t}{a}, \sqrt{ \frac{t}{b}} \Big)  \Big).
	\end{align}
\end{lemma}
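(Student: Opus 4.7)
Set $A := a + \sqrt{bt}$, so that hypothesis \eqref{Y1} reads $\mathbb{P}(|Y - \E Y|\ge s) \le 2\exp(-s/A)$ for all $s>0$. The key elementary inequality I will use repeatedly is
\begin{align*}
\frac{t}{A} \;=\; \frac{t}{a+\sqrt{bt}} \;\ge\; \frac{1}{2}\min\!\left(\frac{t}{a},\sqrt{\frac{t}{b}}\right),
\end{align*}
which follows by separating the two regimes $a\ge\sqrt{bt}$ and $a<\sqrt{bt}$. The plan is to transfer the sub-exponential concentration of $Y$ around $\E Y$ into a concentration of $Z$ around $\mathrm{Med}(Z)$ via the coupling hypothesis \eqref{Y2}, the only nontrivial input being a comparison between $\mathrm{Med}(Z)$ and $\E Y$.

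The first step is to show that $|\mathrm{Med}(Z)-\E Y|\le t/2$ whenever $t$ exceeds some absolute constant (depending only on the universal factor $2$ in the hypotheses). By definition of the median and the coupling,
\begin{align*}
\mathbb{P}\bigl(Z\ge \mathrm{Med}(Z),\,Y=Z\bigr) \;\ge\; \tfrac12 - 2\exp\!\left(-\sqrt{t/b}\right).
\end{align*}
If $\mathrm{Med}(Z)\ge \E Y + t/2$ held, then on this event we would have $Y\ge \E Y + t/2$, so $\mathbb{P}(Y-\E Y\ge t/2)\ge \tfrac12 - 2\exp(-\sqrt{t/b})$. Comparing with the upper tail $\mathbb{P}(Y-\E Y\ge t/2)\le 2\exp(-t/(2A))$ gives a contradiction once $t$ is larger than a universal threshold. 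A symmetric argument rules out $\mathrm{Med}(Z)\le \E Y - t/2$.

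Given this comparison, the conclusion follows from the decomposition
\begin{align*}
\mathbb{P}\bigl(|Z-\mathrm{Med}(Z)|\ge t\bigr) \;\le\; \mathbb{P}(Y\ne Z) + \mathbb{P}\bigl(|Y-\mathrm{Med}(Z)|\ge t\bigr),
\end{align*}
where the first term is $\le 2\exp(-\sqrt{t/b})$ by \eqref{Y2} and the second is bounded by $\mathbb{P}(|Y-\E Y|\ge t/2)\le 2\exp(-t/(2A))$. Applying the elementary inequality above, both contributions are controlled by $2\exp(-\tfrac{1}{C_0}\min(t/a,\sqrt{t/b}))$ for a universal $C_0$, and after combining the two terms and adjusting the constant one obtains the stated bound. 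The small-$t$ regime, where the contradiction argument in Step 1 fails, is absorbed into $C$ since the right-hand side of the claim becomes $\ge 1$ (and the inequality is trivially true) once $C$ is large enough.

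The main obstacle is Step 1: because the hypothesis \eqref{Y1} mixes $a$ with the $t$-dependent scale $\sqrt{bt}$, the median comparison has to be carried out precisely at the scale $t$ that appears in the conclusion, and one must verify that the universal constant implicit in the small-$t$ trivial regime can be chosen independently of $a,b,t$.
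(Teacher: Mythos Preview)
Your proof is correct and follows the standard approach (which is essentially Adamczak's argument that the paper defers to): compare $\mathrm{Med}(Z)$ to $\E Y$ via the concentration and the coupling, then use the triangle inequality together with the elementary bound $t/(a+\sqrt{bt})\ge \tfrac12\min(t/a,\sqrt{t/b})$. One small wording issue: in Step~1 the threshold should be stated in terms of $\min(t/a,\sqrt{t/b})$ being larger than an absolute constant, not $t$ itself, since $t$ large does not by itself force either $2\exp(-\sqrt{t/b})$ or $2\exp(-t/(2A))$ to be small; you effectively correct this at the end when you absorb the ``small'' regime into $C$, so there is no actual gap.
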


 With the above lemmas, we proceed to the proof of (\ref{121701}). Letting $Z \equiv \varphi(\mathbf{y}) := \mathbf{y}^{\mathrm{T}}B\mathbf{y}$, the main part of the proof is to construct a suitable $Y$, such that (\ref{Y1}) and (\ref{Y2}) in Lemma \ref{tech lemma} hold. With the conclusion of Lemma \ref{tech lemma}, and further using Lemma \ref{median to expectation} to replace the median with mean, we finish the proof of (\ref{121701}).

Notice that if $Z$ is a Lipschitz function, we can set $Y = Z$. However, the quadratic function is not Lipschitz. The key idea in \cite{Ada} to construct $Y$ can be summarized as follows. First, observe that $\|\nabla \varphi(\mathbf{y})\|_2$ is a Lipschitz function. We can restrict $\mathbf{y}$ in a small convex set $\mathcal{B}$ where $\|\nabla \varphi(\mathbf{y})\|_2$ is bounded by an appropriately chosen constant, and then $\varphi(\mathbf{y})$ is Lipschitz on $\mathcal{B}$. Next, we can extend this restricted function to $\mathbb{R}^M$. This means that we can find another function $\tilde{\varphi}(\mathbf{y})$ which is Lipschitz on $\mathbb{R}^M$ and is equal to $\varphi(\mathbf{y})$ on $\mathcal{B}$. Then $Y$ can be set as $\tilde{\varphi}(\mathbf{y})$.

Without loss of generality, we prove (\ref{121701}) with real symmetric $B$. Notice that
\begin{align*}
	(\E \|\nabla\varphi(\mathbf{y})\|_2)^2 \le \E \|\nabla\varphi(\mathbf{y}) \|_2^2 = 4\|B \|_{HS}^2.
\end{align*}
This implies that $\E \|\nabla\varphi(\mathbf{y})\| \le 2 \| B\|_{HS}$. For any $t > 0$, define the set $\mathcal{B}$ to be $$\mathcal{B} := \{ \mathbf{y} \in \mathbb{R}^M: \|\nabla \varphi(\mathbf{y}) \|_2 \le 2\| B\|_{HS} + \sqrt{t \| B\|}  \},$$
and  $\tilde{\varphi}(\mathbf{y})$ is defined as
\begin{align*}
	\tilde{\varphi}(\mathbf{y}) := \max_{\mathbf{x} \in \mathcal{B}} ( \langle \nabla \varphi(\mathbf{x}), \mathbf{y}-\mathbf{x} \rangle + \varphi(\mathbf{x}) ).
\end{align*}
One can easily verify that $\tilde{\varphi}(\mathbf{y}) \le {\varphi}(\mathbf{y})$ on $\mathbb{R}^M$ and $\tilde{\varphi}(\mathbf{y}) = {\varphi}(\mathbf{y})$ on $\mathcal{B}$. Therefore,
\begin{align*}
	\mathbb{P}(\tilde{\varphi}(\mathbf{y}) \neq {\varphi}(\mathbf{y})) \le \mathbb{P}(\mathbf{y} \notin \mathcal{B}) = & \mathbb{P} (\|\nabla \varphi(\mathbf{y}) \|_2 > 2\| B\|_{HS} + \sqrt{t \| B\|} ) \\
	\le & \mathbb{P} (| \|\nabla \varphi(\mathbf{y}) \|_2  - \E\|\nabla \varphi(\mathbf{y}) \|_2 |  >  \sqrt{t \| B\|} ) \le \exp \left(- \frac{\psi_{\rho}}{C}  \sqrt{\frac{ t}{ \|B \|}}\right),
\end{align*}
where in the last inequality we used  Corollary \ref{Lip concentration} with the fact that $\|\nabla\varphi(\mathbf{y})\|$ is $2\|B\|$-Lipschitz. This established (\ref{Y2}) in Lemma \ref{tech lemma}. For (\ref{Y1}), one can verify that the Lipschitz constant of $\tilde{\varphi}(\mathbf{y})$ is $2\| B\|_{HS} + \sqrt{t\| B\|}$. Then by Corollary \ref{Lip concentration}, we get for any $s > 0$,
\begin{align*}
	\mathbb{P} \left( |\tilde{\varphi}(\mathbf{y}) - \E \tilde{\varphi}(\mathbf{y}) | \ge s \right) \le 2\exp \left( -\frac{\psi_\rho}{C} \cdot \frac{s}{\big(2\| B\|_{HS} + \sqrt{t\| B\|}\big)} \right),
\end{align*}
which established (\ref{Y1}).

\section{Proofs  of technical estimates in Section \ref{Sec. comparison}}

\subsection{Proofs of (\ref{GQBound})-(\ref{G2ijBound})}\label{sec Proofs of GQBound}
\begin{proof}[Proof of (\ref{GQBound})]
Recall that $z = E + \mathrm{i}\eta_0$ with $\eta_0 = N^{-2/3-\epsilon}$. By Cauchy's integral formula, we have
	\begin{align*}
		\langle x_1, (\mathcal{G}^{(1)}_1(z))^2x_1 \rangle =  \frac{1}{2\pi \mathrm{i}}\oint_{\gamma_{\eta_0}} \frac{\langle x_1, \mathcal{G}^{(1)}_1(a)x_1 \rangle  }{(a -z)^2}\mathrm{d}a,
	\end{align*}
	where $\gamma_{\eta_0}$ is chosen as the circle centered at $z$ with radius $\eta_0/2$. Therefore, we have by triangular inequality,
\begin{align*}
		\left|\langle x_1, (\mathcal{G}^{(1)}_1(z))^2x_1 \rangle\right| 
		\lesssim&  \left|\oint_{\gamma_{\eta_0}} \frac{ N^{-1}\Tr \mathcal{G}^{(1)}_1(a)  }{(a -z)^2}\mathrm{d}a \right| + \left|\oint_{\gamma_{\eta_0}} \frac{\langle x_1, \mathcal{G}^{(1)}_1(a)x_1 \rangle - N^{-1}\Tr \mathcal{G}_1^{(1)}(a)  }{(a -z)^2}\mathrm{d}a \right|
	\end{align*}
	For the first term,  we have
	\begin{align*}
		\left|\oint_{\gamma_{\eta_0}} \frac{ N^{-1}\Tr \mathcal{G}^{(1)}_1(a)  }{(a -z)^2}\mathrm{d}a \right| \lesssim \left|\frac{\Tr (\mathcal{G}^{(1)}_1(z))^2}{N}\right| \lesssim \frac{1}{N}\sum_{i=1}^N \frac{1}{|\lambda_i^{(1)} - z|^2} = \frac{\Im \Tr \mathcal{G}_2^{(1)}(z)}{N\eta_0} \prec \frac{1}{\sqrt{\eta_0}} \leq  N^{1/3+\epsilon}.
	\end{align*} 
	For the second term, using large deviation lemma (cf. Lemma \ref{Large deviation lemma}), we have
	\begin{align*}
		 \left|\oint_{\gamma_{\eta_0}} \frac{\langle x_1, \mathcal{G}^{(1)}_1(a)x_1 \rangle - N^{-1}\Tr \mathcal{G}^{(1)}_1(a)  }{(a -z)^2}\mathrm{d}a \right| \prec &  \left(\frac{\Im \Tr \mathcal{G}^{(1)}_1(z)}{N^2\eta_0^3}\right)^{1/2} \lesssim  \left( \frac{\Im \Tr \mathcal{G}_2^{(1)}(z)}{N^2\eta_0^3} \right)^{1/2} \prec N^{1/3+\epsilon}.
	\end{align*}
	Then (\ref{GQBound}) follows by combining the above estimates .
\end{proof}

\begin{proof}[Proof of (\ref{GijBound})]
	In the following proof, $z = E + \mathrm{i}\eta_0$ with $\eta_0 = N^{-2/3-\epsilon}$ and $|E - \lambda_+| \le N^{-2/3+\epsilon}$. For notational simplicity, We show the bound for $[\mathcal{G}_1(z)]_{11}$ while the bound for $[\mathcal{G}_1^{(1)}(z)]_{ii}$ can be proved similarly. By the resolvent identity, we have
\begin{align*}
	&[\mathcal{G}_1(z)]_{11} = \frac{1}{-z - z\langle r_1, \mathcal{G}_2^{[1]}(z)r_1\rangle} =:\frac{1}{-z - zm_2^{[1]}(z)-z(\mathsf{I}_1 + \mathsf{I}_2 + \mathsf{I}_3)},
\end{align*}
where
\begin{align*}
	&\mathsf{I}_1 := \Big(\sum_{i}x_{1i}^2 - 1\Big)m_2^{[1]}(z),\; \mathsf{I}_2 := \sum_{i}x_{1i}^2\left([\mathcal{G}_2^{[1]}(z)]_{ii} - m_2^{[1]}(z) \right),\; \mathsf{I}_3 := \sum_{i,j,i \neq j}x_{1i}x_{1j}[\mathcal{G}_2^{[1]}(z)]_{ij}. 
\end{align*}
Note that Theorem \ref{Strong MP law} can be still applied to $\mathcal{G}_2^{[1]}(z)$ since each column of $X^{[1]}$ is still an unconditional isotropic  log-concave random vector with dimensional $M-1$ (cf. Proposition \ref{iso prop}). Therefore, 
\begin{align*}
	\Big|[\mathcal{G}_2^{[1]}(z)]_{ij} - m_2^{[1]}(z)\delta_{ij} \Big|\prec  \sqrt{\frac{\Im m_2^{[1]}(z)}{N\eta} } + \frac{1}{N\eta}.
\end{align*}
Here $ m_2^{[1]}(z)$ is the Stieljes transform of Marcenko-Pastur law with parameter $\tilde{y} = (M-1)/N$.

We first consider the estimate of $\mathsf{I}_3$. Since $x_{1i}$'s are unconditionally distributed, we have
\begin{align*}
	\sum_{i,j,i \neq j}x_{1i}x_{1j}[\mathcal{G}_2^{[1]}(z)]_{ij} \overset{d}{=} \sum_{i,j,i \neq j}\delta_i \delta_j x_{1i}x_{1j}[\mathcal{G}_2^{[1]}(z)]_{ij},
\end{align*}
where $\delta_i$'s are i.i.d Rademacher random variables. Therefore, by large deviation of the quadratic form of $\delta_i$'s, we have
\begin{align*}
	\Big| \sum_{i,j,i \neq j}\delta_i \delta_j x_{1i}x_{1j}[\mathcal{G}_2^{[1]}(z)]_{ij} \Big| \prec \Big( \sum_{i,j,i\neq j}x_{1i}^2x_{1j}^2|[\mathcal{G}_2^{[1]}(z)]_{ij}|^2 \Big)^{1/2} \prec N^{-1/3+\epsilon}.
\end{align*}
Hence, we have 
$
	|\mathsf{I}_3| \prec N^{-1/3+\epsilon}.
$
For $\mathsf{I}_2$, we have
\begin{align*}
	|\mathsf{I}_2 |\le \sum_{i}x_{1i}^2|[\mathcal{G}_2^{[1]}(z)]_{ii} - m_2^{[1]}(z)| \prec N^{-1/3+\epsilon} \sum_{i} x_{1i}^2\prec N^{-1/3+\epsilon}.
\end{align*}
Lastly, for $\mathsf{I}_1$, using the fact that $|m_2^{[1]}(z)| \sim 1$, we can trivially bound it as  $|\mathsf{I}_1| \prec N^{-1/2}$ since $x_{1i}$'s are i.i.d.. Then using (\ref{identitym1m2}), the fact $|m_1(z)-m_1^{[1]}(z)|\lesssim \frac{1}{N\eta_0}$,  together with the above estimates, we have
\begin{align*}
	&[\mathcal{G}_1(z)]_{11}  = m_1(z)  + O_{\prec}(N^{-1/3+\epsilon}).
\end{align*}
For $i \neq j$, by the resolvent identity, we have,
\begin{align*}
	|[\mathcal{G}_1(z)]_{ij}| = |z[\mathcal{G}_1(z)]_{ii}[\mathcal{G}_1^{[i]}(z)]_{jj}\langle r_i , \mathcal{G}_2^{[ij]}(z) r_j\rangle| \prec | \langle r_i ,\mathcal{G}_2^{[ij]}(z) r_j\rangle| =  \Big| \sum_{k,l}x_{ik}x_{jl}[\mathcal{G}_2^{[ij]}(z)]_{kl} \Big|
\end{align*}
Again, by the unconditional assumption, we have
\begin{align*}
	\sum_{k,l}x_{ik}x_{jl}[\mathcal{G}_2^{[ij]}(z)]_{kl} \overset{d}{=} \sum_{k,l}\delta_{ik} \delta_{jl} x_{ik}x_{jl}[\mathcal{G}_2^{[ij]}(z)]_{kl}, 
\end{align*}
where $\delta_{ik}$'s and $\delta_{jl}$'s are i.i.d Rademacher random variables. Applying the large deviation inequality for the quadratic forms of $\delta$-variables, we have 
\begin{align*}
	\Big|\sum_{k,l}\delta_{ik} \delta_{jl} x_{ik}x_{jl}[\mathcal{G}_2^{[ij]}(z)]_{kl}\Big| \prec\Big(\sum_{k,l}x_{ik}^2x_{jl}^2 |[\mathcal{G}_2^{[ij]}(z)]_{kl}|^2\Big)^{1/2}  \prec N^{-1/3+\epsilon}. 
\end{align*}
Here we used the bounds from Theorem \ref{Strong MP law} for $\mathcal{G}_2^{[ij]}$. 
Therefore, we have the bound
$
	|[\mathcal{G}_1(z)]_{ij}| \prec  N^{-1/3+\epsilon}.
$ This concludes the proof of (\ref{GijBound}). 
\end{proof}

\begin{proof}[Proof of (\ref{G2ijBound})]
	By the Cauchy's integral formula, we have
	\begin{align*}
		|[(\mathcal{G}_1(z))^2]_{ij}| \lesssim \left|  \oint_{\gamma_{\eta_0}} \frac{[\mathcal{G}_1(a)]_{ij}}{(a - z)^2}\mathrm{d}a\right|.
	\end{align*}
	For the case of $i \neq j$, we can directly bound the RHS of the above inequality using (\ref{GijBound}),
	\begin{align*}
		\left|  \oint_{\gamma_{\eta_0}} \frac{[\mathcal{G}_1(a)]_{ij}}{(a - z)^2}\mathrm{d}a\right| \prec  N^{1/3+\epsilon}.
	\end{align*}
	If $i = j$, we have
	\begin{align*}
		\left|  \oint_{\gamma_{\eta_0}} \frac{[\mathcal{G}_1(a)]_{ii}}{(a - z)^2}\mathrm{d}a\right|  \lesssim &	\left|  \oint_{\gamma_{\eta_0}} \frac{[\mathcal{G}_1(a)]_{ii} -m_1(a)}{(a - z)^2}\mathrm{d}a\right|  + 	\left|  \oint_{\gamma_{\eta_0}} \frac{m_1(a)}{(a - z)^2}\mathrm{d}a\right| 
		\prec  \frac{N^{-1/3 + \epsilon}}{\eta_0} + |m_1'(z)| \prec  N^{1/3+\epsilon}.
	\end{align*}
	Hence, (\ref{G2ijBound}) follows from combining the above estimates.
\end{proof}

%

\end{document}